\DeclareMathOperator{\ord}{ord}
\newtheorem{theorem}{Theorem}[section]
\newtheorem{lemma}[theorem]{Lemma}
\newtheorem{corollary}[theorem]{Corollary}
\newcommand{\Legendre}[2]{\genfrac{(}{)}{}{}{#1}{#2}}
\title{The Sum of Squares function in the ring $\mathbb{Z}_{n}$}
\author{Rob Burns}
\begin{document}
\maketitle
\begin{abstract}
We consider the sum of squares function in the ring $\mathbb{Z}_{n}$. We determine formulae in a number of cases when $n$ is a power of a prime.
\end{abstract}

\section{Introduction}
\label{intro}
%\section{}
%\subsection{}
The representation of elements of a ring as a sum of squares (or more generally as a sum of powers) is a very large and old subject. In the following we attempt to give a flavour of the type of questions that have been studied. It is not intended as a complete survey. 

Restricting ourselves to the integers for the moment, Fermat proved that every integer is either a square or a sum of $2$, $3$, or $4$ squares. Fermat also discovered that an odd prime $p$ can be expressed as the sum of $2$ squares if and only if $p \equiv 1 \pmod{4}$. The first proof of this result was provided by Euler using Fermat's method of infinite descent (\cite{Euler1752} \cite{Euler1754} \cite{Hardy:1979tk}). Euler extended this result to show that a positive integer can be represented as the sum of two squares if and only if each of its prime factors of the form $4k+3$ occurs as an even power (\cite{SUZUKI2007363}).

Diophantus, Bachet, Fermat, Descartes and Lagrange amongst others worked on the representation of integers as the sum of three or more squares. Legendre proved in his \textit{Essai sur la theorie des nombres }(1798) that a positive integer can be written as a sum of three squares if and only if it is not of the form $4^x(8y + 7)$ for nonnegative integers $x$ and $y$.

In 1770, prior to Legendre's result, Lagrange had proved that every positive integer can be written as the sum of at most four squares. This is known as Lagrange's four-square theorem or Bachet's conjecture (\cite{Ireland_1990}).

The asymptotic density of the positive integers which are the sum of two squares is zero. Landau showed that the asymptotic density of the positive integers which are the sum of three squares is $\frac{5}{6}$ (\cite{Landau1908}).

Moving away from the integers, we find that the representation of elements as a sum of squares has been studied in many other rings. 

In Hilbert's book \textit{Foundations of Geometry} (1899), he stated without proof that a totally positive element of any number field can be expressed as a sum of four squares in the field. Landau proved this for quadratic number fields in 1919 and Siegel extended the proof to all number fields in 1921 (\cite{Siegel_1921}). The situation is less straightforward when considering elements from the ring of integers rather than general elements in the number field. For example, in the field $\mathbb{Q}(i)$, which has $\mathbb{Z}(i)$ as its ring of integers, the element $i$ cannot be expressed as a sum of squares of elements from $\mathbb{Z}(i)$.

In 1940 Niven \cite{Niven1940} investigated imaginary quadratic number fields $K = \mathbb{Q}(\sqrt{-m})$ where $m$ is a square-free positive integer. If $\mathbb{Z}_K$ is the ring of integers of $K$, Niven showed that every element in $\mathbb{Z}_K$ can be expressed as a sum of three squares of elements from $\mathbb{Z}_K$ if m$ \equiv 3 \pmod 4$. He also showed that when $m \equiv 1 \pmod 4$, an element $x + y \sqrt{-m}$ in $\mathbb{Z}_K$ is a sum of three squares if and only if $y$ is an even integer.

Results for specific real quadratic number fields are known. For example, Fritz Gotzky showed in \cite{Gotzky1928} that every totally positive integer in  $K = \mathbb{Q}(\sqrt{5})$ can be represented as a sum of four squares of integers in $K$. Harvey Cohn considered $K = \mathbb{Q}(\sqrt{2})$ in \cite{Cohn1960} and proved that a totally positive integer $a = x + y \sqrt{2}$ can be written as a sum of four squares of integers in $K$ if and only if $y$ is even. He in fact provided a formula for the number of ways such an element $a$ could be written as a sum of four integral squares in $K$. He also established a partial result for $\mathbb{Q}(\sqrt{3})$.

In \cite{10.2307/43972685}, Cogdell proved that, in a totally real number field, all sufficiently large totally positive square free integers  that are sums of three squares locally everywhere are in fact sums of three squares globally. Schulze-Pillot showed in his survey article \cite{Schulze-Pillot2004} that the local-global principle does not apply to all totally positive integers. In particular, if $F = \mathbb{Q}(\sqrt{35})$ and $p$ is a prime satisfying $\Legendre{p}{7} = 1$, then no number of the form $7p^2$  is a sum of three integral squares in F, even though $7$ is a sum of three integral squares locally everywhere. 

In \cite{Colliot2009}, Colliot-Th\'el\'ene and Xu consider the connection between the representation of integral elements as a sum of squares and the Brauer-Manin obstruction. An appendix to the paper provides an example of the local-global principle in the setting of cyclotomic fields. The authors prove that an element $x$ in the ring of integers $O$ of a cyclotomic field is a sum of three squares of integers in $O$ if and only if $x$ is a sum of three squares in all local completions $O_v$ of $O$.

Hilbert's seventeenth problem asks whether every real positive definite polynomial, i.e., one which takes positive values only, can be expressed as a sum of squares of polynomials or, if not, whether it can be expressed as a sum of squares of rational functions. There are examples of real positive definite polynomials, such as the Motzkin polynomial $(x^2 + y^2 - 3z^2)x^2y^2 + z^6$, which cannot be expressed as a sum of squares of polynomials. In 1927, Artin \cite{Artin_1927} showed that such polynomials can always be expressed as a sum of squares of rational functions. Leep and Starr \cite{10.2307/2668853} provided examples of positive semidefinite polynomials in two variables which are sums of three rational squares, but not sums of polynomial squares.

The paper by Choi, Lam, Reznick and Rosenberg \cite{CHOI1980234} discusses the connection between representing an element $a$ in an integral domain $A$ as a sum of $n$ squares from the fraction field of $A$ and representing $a$ as a sum of $m \geq n$ squares in $A$ itself. The authors  prove that the former condition implies the latter for certain unique factorisation domains, regular semilocal domains and positive semidefinite polynomials in a polynomial ring over the reals.

The above questions can also be asked about matrix elements over a ring. In a 1968 paper, Carlitz \cite{Carlitz1968} proved that every two-by-two matrix over $\mathbb{Z}$ is a sum of a three squares. Griffin and Krusemeyer \cite{doi:10.1080/03081087708817172} give some circumstances under which a matrix is a sum of two squares. Newman \cite{newman1985} treated the case of $n \times n$ matrices over $\mathbb{Z}_2$ and $\mathbb{Z}$ and found the minimum number of squares needed to represent such matrices. More recently Katre and Garge \cite{Katre_2012} examined $M_n(R)$, the $n \times n$ matrices over $R$, where $R$ is a commutative, associative ring with unity. They provided trace conditions on a matrix in $M_n(R)$ which ensure it is a sum of $k$-th powers of matrices.

Hilbert's seventeenth problem can be extended to matrices. Gondard and Ribenboim \cite{Gondard1974} and Procesi and Schacher \cite{10.2307/1970962} independently proved that matrices with positive semidefinite polynomial function entries can be expressed as a sum of squares of symmetric matrices with rational function entries.

In a general setting, Fernando, Ruiz and Scheiderer \cite{10.2307/3844904} showed that certain excellent rings contain positive semidefinite elements which are not sums of squares.

We finally turn to the ring $\mathbb{Z}_n$ of integers modulo $n$. Harrington, Jones and Lamarche \cite{Harrington:2014wx} determined the values of $n$ for which every element of $\mathbb{Z}_n$ can be written as a sum of two squares. For a prime $p$ and positive integer $k$, Burns \cite{Burns:2017aa} determined which elements of $\mathbb{Z}_{p^k}$ can be written as a sum of two squares in the ring and then again determined the values of $n$ for which all elements of $\mathbb{Z}_n$ are a sum of two squares.  In \cite{Arias:2018aa} Arias,  Borja and Rubio counted the number of integers in $\mathbb{Z}_n$ that are in the image of various polynomials such as $x^2 + y^2$ and $x^2 + y^2 + z^2$. 

In the above contexts we also need to mention Waring's Problem which asks for the least positive integer $g$ such that every element of a given ring is the sum of at most $g$ squares (or higher powers) of elements from the ring. Waring's problem has been studied on each of the rings mentioned above.

We now introduce the sum of squares function $r_m$ which counts the number of ways an element of a ring can be written as a sum of $m$ squares, allowing for zero as one or more of the squares. Specifically,

$$
r_m(n) = \lvert \{ (x_1, x_2, ... , x_m) : \sum_{i=1}^{m} x_i ^2 = n \} \rvert.
$$

Much is known about $r_m$ over the integers. Jacobi expressed $r_m(n)$  in terms of divisor functions when m = 2,4,6 and 8, For example, in \cite{Jacobi1829} he proved that
$$
r_2(n) = 4( d_1(n) - d_3(n) )
$$
where $ d_1(n)$ and $d_3(n)$ are the number of divisors of $n$ congruent to $1 \pmod 4$ and \mbox{$3 \pmod 4$}, respectively. Expressions for $r_m (n)$ are also known for other values of $m$, however some of these involve terms that are not explicitly given, as they appear only as coefficients of modular functions. Milne \cite{10.2307/40817} and Ono \cite{Ono2002} established formulas for $r_{4s^2}$ and $r_{4s^2 + 4s}$ for every $s \geq 1$. Asymptotic expressions are also known for $r_m (n)$. The function $r_2$ is connected to the Gauss circle problem through the summation function $R(N) = \sum_{n \leq N} r_2(n)$. The generating function for $r_m$ can be expressed in terms of the Jacobi theta function:
$$
\sum_{n = 0}^{\infty} r_m(n) x^n = \theta_3^{m}(x).
$$
 
This paper considers the sum of squares function over the ring $\mathbb{Z}_{n}$ of integers modulo $n \in  \mathbb{Z}$.  We define the function $N_m(t, n)$ to be the number of ways of expressing an element $t \in \mathbb{Z}_{n}$ as a sum of $m$ squares of elements from $\mathbb{Z}_{n}$.  Let  $\mathbb{Z}^*_{n}$ be the set of units in  $\mathbb{Z}_{n}$, i.e. the set of elements $t \in \mathbb{Z}^*_{n}$ such that $\gcd(t, n) = 1$. We then define the function $N^*_m(t, n)$ to be the number of ways of expressing an element of $t \in \mathbb{Z}_{n}$ as a sum of $m$ squares of elements from $\mathbb{Z}^*_{n}$. We only need consider the case $n = p^k$, where $p$ is prime and $k \geq 1$. The general result can then be pieced together using the Chinese remainder theorem.

Lebesgue \cite{Dickson} calculated $N_m(t, p)$ when $p$ is an odd prime. His results correspond to Theorems \ref{k=1:m odd:p=1mod4} -  \ref{k=1:m even:p=3mod4} below. In 2014, T\'{o}th \cite{Toth2014} gave formulae for $N_2(t, p)$, $N_2(t, p^2)$ and $N_m(t, 2^k)$ where $p$ is prime and $m, k \geq 1$. He also calculated $N_m(t, p^k)$ for $t \in \{0, 1 \}$, $m \in \{2, 3, 4 \}$, $p$ prime and $k \geq 1$. Some of his results overlap with  Theorems \ref{m=2:p=1mod4}  -  \ref{m=3:p=3mod4} below. Yang and Tang \cite{Yang_2015} gave a formula for $N_2(t, n)$ when $n$ is any integer and $t$ is any element of $\mathbb{Z}_{n}$. Their results correspond to Theorems \ref{m=2:p=2}, \ref{m=2:p=1mod4}  and \ref{m=2:p=3mod4} below. Calder\'{o}n, Grau, Oller-Marc\'{e}n, and T\'{o}th \cite{Calderon2015} calculated $N_m(t, p^k)$ when $p$ is prime, $\gcd(t, p) = 1$ and $m, k \geq 1$. Grau and Oller-Marc\'{e}n \cite{GRAU2019427} completed the picture by calculating $N_m(t, p^k)$ for $p$ prime and $m, k \geq 1$.

Some authors have investigated  $N^*_m(t, n)$. As far as we can tell, there is no straightforward way of deriving  one of $N^*_m(t, n)$  or  $N_m(t, n)$ from the other. Yang and Tang \cite{Yang_2015} gave a formula for $N^*_2(t, n)$ when $n$ is any integer and $t$ is any element of $\mathbb{Z}_{n}$. Mollahajiaghaei \cite{Mollahajiaghaei_2017} and Li and Ouyang \cite{LI201841} generalised this by establishing a formula for $N^*_m(t, n)$ for any $m$. The formulae involve constants which are defined recursively.

We start with results for the ring $\mathbb{Z}_{2^k}$.

\bigskip

\begin{theorem}
\label{Nstar:m:p=2}
Let $k, m \geq 1$. Then,   
$$
N^*_m(t, 2^k) = 
\begin{cases}
    1  & \text{if } \, \, k = 1 \, \, \text{ and } \, \,  t \equiv m \pmod 2 \\
    2^m  & \text{if } \, \,  k = 2 \, \, \text{ and } \, \, t \equiv m \pmod 4 \\
    2^{2m + (m-1)(k-3)} & \text{if } \, \, k \geq 3 \, \, \text{ and }  \, \, t  \equiv m \pmod 8 \\
    0 & \text{otherwise.}
\end{cases}
$$
\end{theorem}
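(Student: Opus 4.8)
The plan is to count solutions of $x_1^2 + \cdots + x_m^2 \equiv t \pmod{2^k}$ with every $x_i$ odd, and to exploit the structure of squares of odd numbers modulo powers of $2$. The key elementary fact is that if $x$ is odd then $x^2 \equiv 1 \pmod 8$; more precisely, the squaring map on $(\mathbb{Z}/2^k\mathbb{Z})^*$ has image equal to the subgroup of elements congruent to $1 \pmod 8$ (for $k \geq 3$), and squaring is exactly $4$-to-$1$ onto that image (the kernel being $\{\pm 1, 2^{k-1}\pm 1\}$). For $k = 1$ every odd square is $1$; for $k = 2$ every odd square is $1 \pmod 4$ and squaring is $2$-to-$1$ onto $\{1\}$. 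This immediately forces the congruence conditions on $t$ in each case ($t \equiv m \bmod 2$, $\bmod 4$, $\bmod 8$ respectively), since a sum of $m$ odd squares is $\equiv m$ modulo $8$ (or $4$, or $2$); when $t$ fails the relevant congruence the count is $0$.

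First I would dispose of $k = 1$ and $k = 2$ directly: for $k=1$ there is exactly one odd residue, giving the single solution $(1,1,\dots,1)$ when $t \equiv m \pmod 2$; for $k = 2$ each $x_i \in \{1, 3\}$ is free and all $2^m$ choices give $x_i^2 \equiv 1$, so the sum is automatically $\equiv m \pmod 4$, yielding $2^m$. For $k \geq 3$ the approach is induction on $k$. The base case $k = 3$ asks for $N^*_m(t, 8) = 2^{2m}$ when $t \equiv m \pmod 8$: here every odd $x_i \in \{1,3,5,7\}$ has $x_i^2 \equiv 1 \pmod 8$, so again \emph{every} one of the $4^m = 2^{2m}$ tuples of odd residues mod $8$ works, and the claimed constant drops out.

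For the inductive step, suppose the formula holds for $k$ and consider $k+1$. Given a solution $(x_1,\dots,x_m)$ modulo $2^{k+1}$, reducing mod $2^k$ gives a solution of $\sum x_i^2 \equiv t \pmod{2^k}$. Conversely I would count, for each solution $(y_1,\dots,y_m)$ mod $2^k$ with $\sum y_i^2 \equiv t \pmod{2^k}$, how many lifts $x_i \equiv y_i \pmod{2^k}$ (with $x_i$ odd, $x_i$ determined mod $2^{k+1}$, so two choices $y_i$ or $y_i + 2^k$ per coordinate) satisfy $\sum x_i^2 \equiv t \pmod{2^{k+1}}$. Since $(y_i + 2^k)^2 = y_i^2 + 2^{k+1}y_i + 2^{2k} \equiv y_i^2 \pmod{2^{k+1}}$ for $k \geq 1$ (as $2k \geq k+1$), \emph{every} one of the $2^m$ lifts of a given solution mod $2^k$ is again a solution mod $2^{k+1}$. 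But we must be careful: a solution mod $2^{k+1}$ reduces to a solution mod $2^k$, yet not every tuple mod $2^k$ with $\sum y_i^2 \equiv t \pmod{2^k}$ need have $\sum y_i^2 \equiv t \pmod{2^{k+1}}$ — it only satisfies the weaker congruence. So the right bookkeeping is: $N^*_m(t, 2^{k+1})$ equals $2^m$ times the number of odd tuples mod $2^k$ with $\sum y_i^2$ lying in the correct residue class mod $2^{k+1}$ among the two classes mod $2^{k+1}$ reducing to $t$ mod $2^k$. This is where the main work lies, and it is the step I expect to be the principal obstacle.

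To handle it cleanly I would instead track, for $k \geq 3$, the full distribution: let $f_k(s)$ be the number of odd tuples $(x_1,\dots,x_m)$ mod $2^k$ with $\sum x_i^2 \equiv s \pmod{2^k}$. The target claim is $f_k(s) = 2^{2m + (m-1)(k-3)}$ for all $s \equiv m \pmod 8$ and $0$ otherwise. Since each $x_i$ mod $2^k$ lifts in $2$ ways to $x_i$ mod $2^{k+1}$ and each lift preserves $x_i^2$ mod $2^{k+1}$ when $k \geq 3$ (the cross term $2^{k+1} y_i$ vanishes), the generating identity is $f_{k+1}(s) = 2^m f_k(s \bmod 2^k)$ when $s$ mod $2^{k+1}$ is the class actually hit, and more precisely the $f_k$-mass sitting over a class $s_0$ mod $2^k$ splits among the two classes mod $2^{k+1}$ above it — but the splitting is not even a priori. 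The resolution is that for $k \ge 3$, squares of odd numbers mod $2^{k}$ take each value in the subgroup $1 + 8\mathbb{Z}/2^k$ with multiplicity exactly $4$ (the classical structure of $(\mathbb{Z}/2^k)^\ast \cong \mathbb{Z}/2 \times \mathbb{Z}/2^{k-2}$), so the "square-value" random variable is uniform on a coset structure; convolving $m$ such uniform variables and using that the group of values is exactly $2^{k-3}$ elements mod $2^k$, a short Fourier/counting argument on the finite abelian $2$-group $1 + 8\mathbb{Z}/2^k \cong \mathbb{Z}/2^{k-3}$ gives that $f_k$ is constant on its support. Carrying out that Fourier computation — or equivalently an explicit induction showing $f_{k+1}(s) = 2^{m-1} f_k(s \bmod 2^k) \cdot 2$ refined to the correct class — and checking the exponent arithmetic $2m + (m-1)(k-3) \mapsto 2m + (m-1)(k-2) = [2m+(m-1)(k-3)] + (m-1)$, i.e. a factor of $2^{m-1}$ per step after accounting for the doubling of the modulus, is the routine-but-delicate part I would write out in full.
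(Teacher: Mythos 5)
Your proposal is correct in substance, and it rests on the same key fact as the paper: for $k \geq 3$ squaring maps the odd residues of $\mathbb{Z}_{2^k}$ exactly $4$-to-$1$ onto the classes $\equiv 1 \pmod 8$ (the paper's Lemma \ref{x2=1mod8}); your handling of $k \in \{1,2\}$ and of the base case $k=3$ is also exactly what is needed. Where you differ is in how the convolution of $m$ such squares is organized. The paper inducts on $m$: writing $N^*_m(t,2^k) = \sum_s N^*_{m-1}(s,2^k)\, N^*_1(t-s,2^k)$, it combines the $4$-to-$1$ lemma with the inductive hypothesis that $N^*_{m-1}$ is constant on the $2^{k-3}$ classes $\equiv m-1 \pmod 8$ and multiplies out. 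Your closing argument instead writes $x^2 = 1 + 8u$ with $u$ uniform on $\mathbb{Z}_{2^{k-3}}$, so the sum of $m$ independent copies is uniform on the coset $m + 8\mathbb{Z}_{2^k}$, and the constant value $2^{(k-1)m}/2^{k-3} = 2^{2m+(m-1)(k-3)}$ is then forced by counting total odd tuples; this is marginally slicker, since uniformity of a single factor already makes the whole convolution uniform, and it avoids checking the exponent identity at each inductive step. Two points for the final writeup: the induction on $k$ occupying the middle of your proposal is a dead end that you correctly abandon --- the even splitting of solutions between the two classes mod $2^{k+1}$ lying over a class mod $2^k$ \emph{is} the uniformity statement, so the lifting analysis buys nothing and should be deleted; and the $4$-to-$1$ claim, which you cite as classical with the correct kernel $\{\pm 1,\, 2^{k-1}\pm 1\}$, still needs an actual proof (the paper supplies one by a Hensel-type induction on $k$ in Lemma \ref{x2=1mod8}).
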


\bigskip

\begin{theorem}
\label{m:p=2}
$N_{m}(t, 2) = 2^{m-1}$ for $t \in \{0, 1 \}$. If $t \in \mathbb{Z}_{4}$, then,
$$
N_{m}(t, 4) = 
\begin{cases}
    2^{2m-2} + (-1)^{t/2} 2^{\frac{3m}{2} - 1} \cos (\frac{m \pi}{4} ) & \text{if } \, \, t \equiv 0 \pmod 2 \\
   2^{2m-2} + (-1)^{(t-1)/2} 2^{\frac{3m}{2} - 1} \sin (\frac{m \pi}{4} ) & \text{if } \, \, t \equiv 1 \pmod 2.
\end{cases}
$$
\end{theorem}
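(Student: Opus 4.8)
The plan is to reduce everything to elementary counting, since modulo $2$ and $4$ the squaring map is extremely simple. For $N_m(t,2)$, observe that in $\mathbb{Z}_2$ one has $x^2 = x$ for both $x = 0$ and $x = 1$, so $\sum_{i=1}^m x_i^2 = \sum_{i=1}^m x_i$ in $\mathbb{Z}_2$. Hence $N_m(t,2)$ is just the number of $(x_1,\dots,x_m) \in \mathbb{Z}_2^m$ with coordinate sum $t$, which by the standard identity $\sum_{j \equiv t \,(2)} \binom{m}{j} = 2^{m-1}$ (or by pairing up complementary tuples) equals $2^{m-1}$ for each $t \in \{0,1\}$.

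For $N_m(t,4)$ I would first record that the squaring map on $\mathbb{Z}_4$ sends $\{0,2\}$ to $0$ and $\{1,3\}$ to $1$, so exactly two elements square to $0$ and two square to $1$. Building an $m$-tuple of squares summing to $t$ therefore amounts to choosing which $j$ of the $m$ coordinates contribute the square value $1$ (the remaining $m-j$ contributing $0$), subject to $j \equiv t \pmod 4$, and then choosing each coordinate from its $2$-element fibre. This gives
$$
N_m(t,4) = \sum_{\substack{0 \le j \le m \\ j \equiv t \,(4)}} \binom{m}{j}\, 2^{j}\, 2^{m-j} = 2^m \sum_{\substack{0 \le j \le m \\ j \equiv t \,(4)}} \binom{m}{j}.
$$

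Next I would evaluate the inner sum by the standard roots-of-unity filter over the fourth roots of unity, with $i = \sqrt{-1}$:
$$
\sum_{j \equiv t \,(4)} \binom{m}{j} = \frac{1}{4}\sum_{r=0}^{3} i^{-rt}(1+i^{r})^m = \frac{1}{4}\Bigl( 2^m + i^{-t}(1+i)^m + i^{t}(1-i)^m \Bigr),
$$
the $r = 2$ term vanishing because $(1+i^2)^m = 0$ for $m \ge 1$. Writing $1 \pm i = \sqrt{2}\,e^{\pm i\pi/4}$ turns the last two terms into $2^{m/2+1}\cos\!\bigl(\tfrac{m\pi}{4} - \tfrac{t\pi}{2}\bigr)$, so that
$$
N_m(t,4) = 2^{2m-2} + 2^{\frac{3m}{2}-1}\cos\!\Bigl(\frac{m\pi}{4} - \frac{t\pi}{2}\Bigr).
$$
Finally I would split into the residues $t \equiv 0,1,2,3 \pmod 4$ and apply the shift identities $\cos(\theta - \tfrac{\pi}{2}) = \sin\theta$, $\cos(\theta - \pi) = -\cos\theta$, $\cos(\theta - \tfrac{3\pi}{2}) = -\sin\theta$; this converts the single cosine into the $\cos$/$\sin$ dichotomy of the statement, with the sign $(-1)^{\lfloor t/2\rfloor}$ appearing as $(-1)^{t/2}$ in the even case and $(-1)^{(t-1)/2}$ in the odd case.

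Honestly there is no serious obstacle here: the entire content is the observation that squaring on $\mathbb{Z}_4$ is two-to-one onto $\{0,1\}$, together with a textbook roots-of-unity computation. The only mildly delicate point is bookkeeping — keeping the argument $\tfrac{m\pi}{4} - \tfrac{t\pi}{2}$ consistent across the four residue classes so the signs come out exactly as written, and checking that the $(1+i^2)^m$ term genuinely drops out for all $m \ge 1$.
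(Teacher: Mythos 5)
Your proof is correct and follows essentially the same route as the paper: both reduce $N_m(t,2)$ and $N_m(t,4)$ to the binomial sums $\sum_{j \equiv t} \binom{m}{j}$ (times $2^m$ in the modulus-$4$ case) and then evaluate them by a fourth-roots-of-unity computation. The only cosmetic difference is that the paper quotes Ramus' identity for this last step, whereas you derive the needed special case directly with the roots-of-unity filter.
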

\bigskip

\begin{theorem}
\label{m=2:p=2}
Let $t \in \mathbb{Z}_{2^k}$ with $k \geq 2$. Write $t = 2^{\alpha}\beta$ where $2 \nmid \beta$. Then in $\mathbb{Z}_{2^k}$,
$$
N_{2}(t, 2^k) = 
\begin{cases}
    2^{k}  & \text{if } \, \, t = 0 \\
    2^{k+1}   & \text{if} \, \, 0 \leq \alpha < k - 1 \text{ and } \beta \equiv 1 \pmod 4 \\
    0   & \text{if} \, \, 0 \leq \alpha < k - 1 \text{ and } \beta \equiv 3 \pmod 4 \\
    2^{k}   & \text{if} \, \, \alpha = k - 1.
\end{cases}
$$
\end{theorem}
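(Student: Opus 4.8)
The plan is to establish a recursion expressing $N_2(t,2^k)$ in terms of $N_2(\,\cdot\,,2^{k-1})$, using Theorem~\ref{m:p=2} (specifically $N_2(\,\cdot\,,2)=2$) as the base case. The main device is the ``rotation'' map $\sigma\colon(x,y)\mapsto(x+y,\,x-y)$ on $(\mathbb{Z}_{2^k})^2$. I would first check that $\sigma$ is two-to-one onto $P:=\{(u,v)\in(\mathbb{Z}_{2^k})^2: u\equiv v\pmod 2\}$, with the two preimages of any point differing by $(2^{k-1},2^{k-1})$, and that for any $w$ the value $w^2\bmod 2^{k+1}$ depends only on $w\bmod 2^k$; together with $(x+y)^2+(x-y)^2=2(x^2+y^2)$ this yields the identity
$$
N_2(t,2^k)=2\cdot\#\bigl\{(u,v)\in P: u^2+v^2\equiv 2t\pmod{2^{k+1}}\bigr\}.
$$

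Next I would split $P$ according to whether $u,v$ are both even or both odd. In the both-even case, writing $u=2u_1$, $v=2v_1$ turns the congruence into $u_1^2+v_1^2\equiv t/2\pmod{2^{k-1}}$ with $u_1,v_1$ ranging over $\mathbb{Z}_{2^{k-1}}$; this has exactly $N_2(t/2,2^{k-1})$ solutions when $t$ is even, and none when $t$ is odd. In the both-odd case, $u^2+v^2\equiv 2\pmod 8$, so there is no solution unless $t\equiv1\pmod 4$; and when $t\equiv1\pmod 4$, for each of the $2^{k-1}$ odd $u$ the number $2t-u^2$ is $\equiv1\pmod 8$, hence an odd square modulo $2^{k+1}$ having exactly two odd square roots in $\mathbb{Z}_{2^k}$, so this case contributes $2\cdot 2^{k-1}=2^k$.

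Combining the two cases yields, for $k\ge 2$,
$$
N_2(t,2^k)=
\begin{cases}
2^{k+1}&\text{if }t\equiv1\pmod 4,\\
0&\text{if }t\equiv3\pmod 4,\\
2\,N_2(t/2,\,2^{k-1})&\text{if }t\equiv0\pmod 2,
\end{cases}
$$
and writing $t=2^{\alpha}\beta$ with $\beta$ odd (or $t=0$) and iterating the third line $\alpha$ times finishes the proof: one reaches either an odd argument at a level $\ge 2$, giving $2^{k+1}$ or $0$ according to $\beta\bmod 4$ (the cases $0\le\alpha<k-1$), or, when $\alpha=k-1$, level $1$ with value $2^{k-1}N_2(\beta,2)=2^k$; similarly $N_2(0,2^k)=2^{k-1}N_2(0,2)=2^k$. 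The step I expect to be the main obstacle is the both-odd count, i.e.\ showing that every residue $\equiv1\pmod 8$ is a square modulo $2^{k+1}$ with exactly two square roots among the odd elements of $\mathbb{Z}_{2^k}$; this is a counting argument based on the structure of $(\mathbb{Z}/2^{k+1}\mathbb{Z})^{\times}$, and the edge case $k=2$ should be verified separately. An alternative, non-inductive route is to write $N_2(t,2^k)=\tfrac{1}{2^k}\sum_{a}g(a,2^k)^2e^{-2\pi i at/2^k}$ with $g(a,2^k)=\sum_{x}e^{2\pi i ax^2/2^k}$, invoke the classical evaluation of the quadratic Gauss sum modulo $2^r$ (from which $g(b,2^r)^2=2^{r+1}i^{b}$ for odd $b$ and $r\ge2$), and isolate the single nonzero term by a case analysis on $v_2(t)$.
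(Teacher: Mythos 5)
Your argument is correct, and it reaches the theorem by a genuinely different route from the paper. The paper's proof is a two-step descent: Lemma \ref{2kred2k-2} shows the halving map $(x,y)\mapsto(x/2,y/2)$ is $4$-to-$1$, giving $N_2(2^{\alpha}\beta,2^k)=4\,N_2(2^{\alpha-2}\beta,2^{k-2})$, which must then be anchored by three separate base computations: $N_2$ at odd arguments (Lemma \ref{2s2k}), at arguments $\equiv 2 \pmod 4$ (Lemma \ref{22s2k}), and at $t=0$ (Lemma \ref{02k}, a direct count via $\ord_2(x^2+y^2)$). Your rotation map $\sigma:(x,y)\mapsto(x+y,x-y)$ instead produces a single one-step recursion, $N_2(t,2^k)=2N_2(t/2,2^{k-1})$ for even $t$ together with the direct evaluation $2^{k+1}$ or $0$ for odd $t$, and this one formula absorbs all three of the paper's base cases uniformly --- in particular $t=0$ and $\alpha=k-1$ fall out of the same iteration rather than needing separate treatment. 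The price is the slightly more delicate bookkeeping of working modulo $2^{k+1}$ with representatives from $\mathbb{Z}_{2^k}$ (which you handle correctly: $w^2 \bmod 2^{k+1}$ is determined by $w \bmod 2^k$, and the four square roots modulo $2^{k+1}$ of a residue $\equiv 1 \pmod 8$ collapse to exactly two odd residues of $\mathbb{Z}_{2^k}$). The engine you flag as the main obstacle is precisely the paper's Lemma \ref{x2=1mod8} applied one level up, proved there by induction on $k$ plus a pigeonhole count, so nothing essentially new is needed to complete your sketch; your Gauss-sum alternative is a third viable route that the paper does not pursue.
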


\bigskip

\begin{theorem}
\label{m=3:p=2}
Let $t \in \mathbb{Z}_{2^k}$ with $k \geq 3$ and write $t = 2^{\alpha}\beta$ where $2 \nmid \beta$. Then in $\mathbb{Z}_{2^k}$,
$$
N_{3}(t, 2^k) = 
\begin{cases}
    2^{3k/2}  & \text{if } \, \, t = 0  \text{ and } k  \text{ is even } \\
    2^{(3k+1)/2}  & \text{if } \, \, t = 0  \text{ and } k  \text{ is odd } \\
    3 \times 2^{2k - \alpha/2 - 1}   & \text{if} \, \, k - \alpha > 2, \alpha \text{ is even and } \beta \equiv 1 \pmod 4 \\
    2^{2k - \alpha/2}   & \text{if} \, \, k - \alpha > 2, \alpha \text{ is even and } \beta \equiv 3 \pmod 8 \\
    0   & \text{if} \, \, k - \alpha > 2, \alpha \text{ is even and } \beta \equiv 7 \pmod 8 \\
    3 \times 2^{2k - (\alpha + 1)/2}   & \text{if} \, \, k - \alpha > 2 \text{ and }\alpha \text{ is odd } \\
    3 \times 2^{3k/2}   & \text{if} \, \, k - \alpha = 2, \alpha \text{ is even and } \beta \equiv 1 \pmod 4 \\
    2^{3k/2}   & \text{if} \, \, k - \alpha = 2, \alpha \text{ is even and } \beta \equiv 3 \pmod 4 \\
    3 \times 2^{(3k + 1)/2}   & \text{if} \, \, k - \alpha = 2 \text{ and } \alpha \text{ is odd } \\
    2^{(3k + 1)/2}   & \text{if} \, \, k - \alpha = 1 \text{ and } \alpha \text{ is even } \\
    3 \times 2^{3k/2}   & \text{if} \, \, k - \alpha = 1 \text{ and } \alpha \text{ is odd.}
\end{cases}
$$
\end{theorem}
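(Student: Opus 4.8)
The plan is to expand $N_3$ in additive characters of $\mathbb{Z}_{2^k}$. Writing $e(z)=e^{2\pi i z}$ and $g(a,2^l)=\sum_{x\bmod 2^l}e(ax^2/2^l)$ for the quadratic Gauss sum, orthogonality of characters gives
$$N_3(t,2^k)=\frac{1}{2^k}\sum_{a=0}^{2^k-1}g(a,2^k)^3\,e\!\left(-\frac{at}{2^k}\right);$$
this is the same mechanism behind Theorem~\ref{m:p=2}, where the $\cos$ and $\sin$ terms are just $g(a,4)^m$ rewritten. For $m=3$ and general $k$ the task is to evaluate and then sum the cubes of the dyadic Gauss sums.

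First I would record the standard evaluations: $g(0,2^k)=2^k$, and for $a=2^j\gamma$ with $\gamma$ odd and $0\le j\le k-1$, collapsing $x$ to residues mod $2^{k-j}$ gives $g(a,2^k)=2^j g(\gamma,2^{k-j})$; moreover, for $b$ odd, $g(b,2)=0$, $g(b,2^l)=2^{l/2}(1+i^b)$ for even $l\ge 2$, and $g(b,2^l)=2^{(l+1)/2}e(b/8)$ for odd $l\ge 3$ --- the last two from computing $g(b,4)$ and $g(b,8)$ directly together with the recursion $g(b,2^{l+2})=2g(b,2^l)$, $l\ge 2$ (same collapsing argument). Grouping the character sum by $j$ and setting $l=k-j$: the $j=k-1$ terms vanish since $g(\gamma,2)=0$, the $a=0$ term gives $2^{2k}$, and so
$$N_3(t,2^k)=2^{2k}+\sum_{l=2}^{k}2^{2k-3l}S_l,\qquad S_l=\sum_{\substack{b\bmod 2^l\\ 2\nmid b}}g(b,2^l)^3\,e\!\left(-\frac{bt}{2^l}\right).$$

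Next I would evaluate $S_l$. Substituting the Gauss sums, $S_l=2^{3l/2+1}\sum_b(i^b-1)e(-bt/2^l)$ for even $l$, and $S_l=2^{3(l+1)/2}\sum_b e\bigl((3\cdot 2^{l-3}-t)b/2^l\bigr)$ for odd $l\ge 3$, the sums over odd $b\bmod 2^l$; each inner sum has the form $\sum_b\omega^b$ with $\omega$ a $2^l$-th root of unity, equal to $2^{l-1}\omega$ if $\omega^2=1$ and to $0$ otherwise. With $t=2^\alpha\beta$, $2\nmid\beta$, this yields, for even $l$: $S_l=-2^{5l/2}$ if $\alpha\ge l$; $S_l=2^{5l/2}$ if $\alpha=l-1$; $S_l=2^{5l/2}$ or $-2^{5l/2}$ according as $\beta\equiv 1$ or $3\pmod 4$ if $\alpha=l-2$; and $S_l=0$ if $\alpha\le l-3$. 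And for odd $l\ge 3$: $S_l=\pm 2^{(5l+1)/2}$, with sign $+$ iff $\beta\equiv 3\pmod 8$, when $\alpha=l-3$ and $\beta\equiv 3\pmod 4$, and $S_l=0$ otherwise; for $t=0$ only the even $l$ survive, each with $S_l=-2^{5l/2}$. Consequently, for fixed $\alpha$ the contributing indices are exactly the even $l$ with $2\le l\le\min(\alpha+2,k)$, plus the single odd index $l=\alpha+3$ when $\alpha$ is even, $\alpha+3\le k$, and $\beta\equiv 3\pmod 4$; such a term contributes $\pm 2^{2k-l/2}$ (even $l$) or $\pm 2^{2k-\alpha/2-1}$ (the odd $l$) to $N_3$.

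Finally I would substitute into $N_3(t,2^k)=2^{2k}+\sum 2^{2k-3l}S_l$ and sum the short geometric series case by case. When $k-\alpha>2$ the even-$l$ contributions telescope, the leading $2^{2k}$ cancels against them, and one is left with one or two terms of size roughly $2^{2k-\alpha/2}$ whose signs are governed by $\beta\bmod 8$; this produces the $3\times$, the plain power, or the $0$ of cases three through six. The regimes $k-\alpha=2$, $k-\alpha=1$, and $t=0$ have to be treated separately because the index set $\{l\}$ is truncated by $k$: the top even term $l=\alpha+2$ becomes $l=k$ when $k-\alpha=2$, and the odd term $l=\alpha+3$ disappears once $k-\alpha\le 2$ (which is why there is no $\beta\bmod 8$ distinction there), so the telescoping stops one or two steps early and leaves remainders of size $2^{3k/2}$ or $2^{(3k+1)/2}$. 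I expect the only real difficulty to be this bookkeeping --- matching the parity of $\alpha$, the class of $\beta\bmod 8$, and the regime of $k-\alpha$ to the surviving $S_l$, and verifying the near-exact cancellation of $2^{2k}$ in every branch --- rather than any single conceptual step; it is that cancellation, driven by the even/odd split in the Gauss-sum formula, that forces the factors of $3$ and the mod-$8$ conditions on $\beta$.
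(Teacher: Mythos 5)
Your proof is correct --- I checked the dyadic Gauss-sum evaluations, the vanishing criteria for the $S_l$, and the resulting geometric sums against every branch of the theorem, and they all come out right --- but it takes a genuinely different route from the paper. The paper's argument is an elementary descent: Lemma \ref{2kred2k-2} exhibits an $8$-to-$1$ covering of $S_3(2^{\alpha}\beta,2^k)$ onto $S_3(2^{\alpha-2}\beta,2^{k-2})$, which strips the even part of $t$ off two steps at a time and reduces everything to the base cases $N_3(\beta,2^{k-\alpha})$ and $N_3(2\beta,2^{k-\alpha+1})$ with $\beta$ odd; those are computed in Lemmas \ref{3s2k} and \ref{32s2k} by conditioning on the parities of $x,y,z$ and invoking the two-square counts, which ultimately rest on the four-fold square-root count of Lemma \ref{x2=1mod8}; the $t=0$ case is Lemma \ref{03k}. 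Your character-sum expansion replaces all of this with the standard Gauss-sum formulas and the single uniform identity $N_3(t,2^k)=2^{2k}+\sum_{l}2^{2k-3l}S_l$, after which the theorem's case split by the parity of $\alpha$, the class of $\beta \bmod 8$, and the size of $k-\alpha$ falls out of which $S_l$ survive and where the index set is truncated at $l=k$. What your approach buys is uniformity --- it would yield $N_m(t,2^k)$ for every $m$ with no new ideas (the paper's Theorem \ref{m:p=2} via Ramus' identity is exactly the $k\le 2$ shadow of this), and it explains the factors of $3$ and the mod-$8$ conditions as interference between the $a=0$ term and the ramified frequencies; the cost is importing the Gauss-sum machinery, whereas the paper's descent is self-contained but must be re-engineered for each $m$. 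In a written version the one computation to display is the near-cancellation $\sum_{j=1}^{\lfloor k/2\rfloor}2^{2k-j}=2^{2k}-2^{2k-\lfloor k/2\rfloor}$ of the main term, which you correctly identify as the source of all the bookkeeping.
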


\bigskip

Next we present results for the ring $\mathbb{Z}_p$, where $p$ is an odd prime. When $t \in \mathbb{Z}_{p}$ the value of $N_m(t, p)$ depends on \mbox{$p \pmod 4$, $m \pmod 2$ and $\Legendre{t}{p}$}. For clarity, we have divided the result into four parts.

\bigskip

\begin{theorem}
\label{k=1:m odd:p=1mod4}
If $p$ is an odd prime with $p = 1 \pmod 4$ and $m \geq 0$, then in $\mathbb{Z}_p$,
$$
N_{2m+1}(t, p) = 
\begin{cases}
    p^{2m}  & \text{if } \, \, t = 0 \\
    p^{2m} - p^m   & \text{if} \, \, t \text{ is a non-residue} \pmod p \\
    p^{2m} + p^m   & \text{if} \, \, t \text{ is a residue} \pmod p.
\end{cases}
$$
\end{theorem}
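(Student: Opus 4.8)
The plan is to evaluate $N_{2m+1}(t,p)$ by the additive-character (Gauss sum) method, which handles all $m\ge 0$ uniformly. Write $\zeta=e^{2\pi i/p}$ and use the orthogonality relation $\frac1p\sum_{a=0}^{p-1}\zeta^{ar}$, which is $1$ when $p\mid r$ and $0$ otherwise. Applying this to the defining equation $x_1^2+\dots+x_{2m+1}^2=t$ and separating the variables gives
$$
N_{2m+1}(t,p)=\frac1p\sum_{a=0}^{p-1}\zeta^{-at}\Bigl(\sum_{x=0}^{p-1}\zeta^{ax^2}\Bigr)^{2m+1}.
$$
The term $a=0$ contributes $\frac1p\cdot p^{2m+1}=p^{2m}$, which will be the main term in every case; everything else comes from the $a\neq0$ terms.

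For those I would invoke the two elementary facts about the quadratic Gauss sum $g:=\sum_{x=0}^{p-1}\zeta^{x^2}$: first, for $a\not\equiv0$ a change of variable gives $\sum_{x}\zeta^{ax^2}=\Legendre{a}{p}g$; second, $g^2=\Legendre{-1}{p}p$. Since we are in the case $p\equiv1\pmod4$ we have $\Legendre{-1}{p}=1$, hence $g^2=p$ — note that the delicate determination of the \emph{sign} of $g$ is never needed. Because the exponent $2m+1$ is odd, $\Legendre{a}{p}^{2m+1}=\Legendre{a}{p}$ and $g^{2m+1}=g\,(g^2)^m=p^m g$, so the $a\neq0$ part of the sum collapses to $\dfrac{p^m g}{p}\sum_{a=1}^{p-1}\Legendre{a}{p}\zeta^{-at}$.

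It remains to evaluate $S:=\sum_{a=1}^{p-1}\Legendre{a}{p}\zeta^{-at}$. If $t\equiv0$ then $S=\sum_{a}\Legendre{a}{p}=0$, giving $N_{2m+1}(0,p)=p^{2m}$. If $t\not\equiv0$, the substitution $a\mapsto(-t)^{-1}a$ shows $S=\Legendre{-t}{p}g=\Legendre{t}{p}g$ (using $\Legendre{-1}{p}=1$ again), whence
$$
N_{2m+1}(t,p)=p^{2m}+\frac{p^m g}{p}\cdot\Legendre{t}{p}g=p^{2m}+\frac{g^2}{p}\,p^m\Legendre{t}{p}=p^{2m}+p^m\Legendre{t}{p},
$$
which equals $p^{2m}+p^m$ when $t$ is a residue and $p^{2m}-p^m$ when $t$ is a non-residue. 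This reproduces all three cases of the statement.

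The computation is essentially routine, so there is no real obstacle; the only points requiring care are bookkeeping ones. The hypothesis $p\equiv1\pmod4$ is used in exactly two places — to make $g^2=p$ and to replace $\Legendre{-t}{p}$ by $\Legendre{t}{p}$ — and the oddness of $2m+1$ is precisely what turns $g^{2m+1}$ into $p^m g$ and $\Legendre{a}{p}^{2m+1}$ into $\Legendre{a}{p}$, so the parity dependence built into the four-part split of the result is visible already here. An alternative would be induction on $m$, with base case the elementary count of square roots ($N_1$) and inductive step the convolution $N_{2m+3}(t,p)=\sum_s N_2(s,p)\,N_{2m+1}(t-s,p)$ once $N_2(\cdot,p)$ is known for $p\equiv1\pmod4$; but the Gauss sum argument is shorter and uniform in $m$, so I would present that.
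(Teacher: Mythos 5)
Your proof is correct, but it takes a genuinely different route from the paper. The paper works entirely with real counting arguments: it computes $\gamma_1=(1,0,2)^T$ and $\gamma_2$ directly, uses Lemmas \ref{t-z^2modp1} and \ref{t-z^2modp2} to count $\lvert\{z:\Legendre{t-z^2}{p}=\pm1\}\rvert$, packages the convolution $N_{m+1}(t,p)=\sum_s N_m(s,p)\,\lvert\{z:t-z^2=s\}\rvert$ as a $3\times3$ matrix recurrence $\gamma_{m+1}=A\gamma_m$, and then \emph{verifies} the closed forms by checking that $A^2$ maps the claimed vector for index $m$ to the claimed vector for index $m+2$ --- an approach that requires the answer to be guessed in advance. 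Your Gauss-sum computation derives the formula rather than verifying it, is uniform in $m$, and with the trivial modification $g^{2m}=p^m$, $\Legendre{a}{p}^{2m}=1$ it yields Theorem \ref{k=1:m even:p=1mod4} (and, keeping $\Legendre{-1}{p}=-1$, Theorems \ref{k=1:m odd:p=3mod4} and \ref{k=1:m even:p=3mod4}) in the same stroke; the key facts you invoke, $\sum_x\zeta^{ax^2}=\Legendre{a}{p}g$ and $g^2=\Legendre{-1}{p}p$, are standard and you correctly note that the sign of $g$ is never needed. What the paper's elementary route buys in exchange is that it avoids complex characters entirely and, more importantly, its convolution-over-$s$ decomposition is exactly the mechanism reused later (equation (\ref{recurrencek})) to climb from $\mathbb{Z}_p$ to $\mathbb{Z}_{p^k}$, where the character method would be less convenient. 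The "alternative induction" you sketch at the end is essentially the paper's actual argument.
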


\bigskip

\begin{theorem}
\label{k=1:m even:p=1mod4}
If $p$ is an odd prime with $p = 1 \pmod 4$ and $m \geq 1$, then in $\mathbb{Z}_p$,
$$
N_{2m}(t, p) = 
\begin{cases}
    p^{2m-1} + p^m - p^{m-1}   & \text{if } \, \, t = 0 \\
    p^{2m-1} - p^{m-1}    & \text{if} \, \, t \neq 0.
\end{cases}
$$
\end{theorem}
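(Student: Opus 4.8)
The plan is to count the solutions of $x_1^2 + \cdots + x_{2m}^2 = t$ in $\mathbb{F}_p = \mathbb{Z}_p$ by the usual additive-character device. Writing $e_p(y)$ for $e^{2\pi i y/p}$ and using that $\frac1p\sum_{a=0}^{p-1} e_p(au)$ equals $1$ when $p \mid u$ and $0$ otherwise, one gets
$$
N_{2m}(t,p) = \frac1p \sum_{a=0}^{p-1} e_p(-at)\, G(a)^{2m}, \qquad G(a) := \sum_{x \in \mathbb{F}_p} e_p(a x^2).
$$
The term $a = 0$ contributes $G(0)^{2m}/p = p^{2m}/p = p^{2m-1}$, which will be the main term in both cases.

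For $a \neq 0$ I would use the two standard facts $G(a) = \Legendre{a}{p} G(1)$ and $G(1)^2 = \Legendre{-1}{p}\, p$; the first follows by writing $\sum_x e_p(ax^2) = \sum_y \bigl(1 + \Legendre{y}{p}\bigr) e_p(ay)$ (since $y$ has $1+\Legendre{y}{p}$ square roots), observing $\sum_y e_p(ay) = 0$, and rescaling $y \mapsto a^{-1}y$ in the remaining sum. Since $p \equiv 1 \pmod 4$ we have $\Legendre{-1}{p} = 1$, hence $G(1)^2 = p$, and because the exponent $2m$ is even,
$$
G(a)^{2m} = \Legendre{a}{p}^{2m} G(1)^{2m} = \bigl(G(1)^2\bigr)^{m} = p^{m} \qquad (a \neq 0).
$$
In particular the delicate determination of the \emph{sign} of the quadratic Gauss sum — which is precisely what separates this case from the odd case of Theorem~\ref{k=1:m odd:p=1mod4} — never enters. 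Substituting back,
$$
N_{2m}(t,p) = p^{2m-1} + p^{m-1}\sum_{a=1}^{p-1} e_p(-at),
$$
and since $\sum_{a=1}^{p-1} e_p(-at)$ equals $p-1$ for $t = 0$ and $-1$ for $t \neq 0$, collecting terms yields $p^{2m-1} + p^m - p^{m-1}$ and $p^{2m-1} - p^{m-1}$ respectively, as claimed.

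The one ingredient that is not a one-line manipulation is the evaluation $G(1)^2 = \Legendre{-1}{p}\,p$, and that is where I expect to spend a few lines: I would record it as a short lemma, proving it by expanding $|G(1)|^2 = \sum_{x,y} e_p(x^2-y^2)$, rescaling $x \mapsto xy$ on the range $y \neq 0$, and summing the resulting geometric series (together with $\overline{G(1)} = \Legendre{-1}{p}G(1)$), or simply by citing it as a classical fact about Gauss sums. As an alternative to the whole character-sum computation one can instead induct on $m$: with $N_2(0,p) = 2p-1$ and $N_2(t,p) = p-1$ for $t \neq 0$ as base case, the convolution identity $N_{2m+2}(t,p) = \sum_{s \in \mathbb{F}_p} N_{2m}(s,p)\, N_2(t-s,p)$ together with the inductive hypothesis reproduces the claimed formula with $m$ replaced by $m+1$ after a short simplification, the algebra closing because the ``error term'' $\pm p^{m-1}$ is essentially preserved under convolution with $N_2$. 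I would present the character-sum argument, as it is the more transparent of the two.
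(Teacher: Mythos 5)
Your argument is correct, but it is not the paper's argument. The paper never touches characters: it computes $\gamma_1$ and $\gamma_2$ directly (Lemma \ref{y2=x2+t}, Lemma \ref{sumx2+tp}, Corollary \ref{k=1:m=2}), then uses Lemmas \ref{t-z^2modp1} and \ref{t-z^2modp2} to package the convolution step $N_{m+1}(t,p)=\sum_s N_m(s,p)\,\lvert\{z: t-z^2=s\}\rvert$ into a $3\times 3$ transfer matrix $A$ acting on the vector $\gamma_m=(N_m(0,p),N_m(s,p),N_m(t,p))^{T}$, and finally \emph{verifies} that the claimed closed forms are reproduced by $A^2$. Your second, briefly sketched alternative (inducting via $N_{2m+2}=N_{2m}*N_2$) is essentially the paper's method restricted to even indices, and the algebra does close as you claim. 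Your primary route via $N_{2m}(t,p)=\frac1p\sum_a e_p(-at)G(a)^{2m}$ with $G(a)=\Legendre{a}{p}G(1)$ and $G(1)^2=\Legendre{-1}{p}p$ is a clean, standard alternative: it derives the formula rather than verifying a guessed answer, it handles all residues $t$ at once, and, as you correctly observe, only $G(1)^2$ is ever needed when the number of squares is even (and in fact even in the odd case the count depends only on $G(1)^2$, since the leftover sum $\sum_a\Legendre{a}{p}e_p(-at)$ reintroduces a factor of $G(1)$), so the hard sign determination of the Gauss sum is never required. What the paper's approach buys in exchange is that it is entirely elementary --- pure counting over $\mathbb{Z}_p$, no complex exponentials --- and that the single matrix $A$ (resp.\ $B$ for $p\equiv 3\pmod 4$) simultaneously generates all four Theorems \ref{k=1:m odd:p=1mod4}--\ref{k=1:m even:p=3mod4}. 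Both proofs are complete; make sure, if you present the character-sum version, to either prove or properly cite $\lvert G(1)\rvert^2=p$ and $\overline{G(1)}=\Legendre{-1}{p}G(1)$, which together give the identity $G(1)^2=\Legendre{-1}{p}p$ that everything rests on.
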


\bigskip

\begin{theorem}
\label{k=1:m odd:p=3mod4}
If $p$ is an odd prime with $p = 3 \pmod 4$ and $m \geq 0$, then in $\mathbb{Z}_p$
$$
N_{2m+1}(t, p) = 
\begin{cases}
    p^{2m}   & \text{if } \, \, t = 0 \\
    p^{2m} + (-1)^{m+1} p^m   & \text{if} \, \, t \text{ is a non-residue} \pmod p \\
    p^{2m} + (-1)^{m} p^m   & \text{if} \, \, t \text{ is a residue} \pmod p.
\end{cases}
$$
\end{theorem}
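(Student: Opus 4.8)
The plan is to evaluate $N_{2m+1}(t,p)$ by the standard additive-character method over $\mathbb{F}_p$. Put $e_p(x)=\exp(2\pi i x/p)$. Using $\frac{1}{p}\sum_{a\in\mathbb{F}_p}e_p(au)$, which equals $1$ if $u=0$ and $0$ otherwise, one writes
$$
N_{2m+1}(t,p)=\frac{1}{p}\sum_{a\in\mathbb{F}_p}e_p(-at)\Bigl(\sum_{x\in\mathbb{F}_p}e_p(ax^2)\Bigr)^{2m+1}.
$$
The term $a=0$ contributes $p^{2m+1}/p=p^{2m}$. For $a\neq0$ I would use the elementary identity $\sum_{x}e_p(ax^2)=\Legendre{a}{p}g$, where $g=\sum_{x}e_p(x^2)$ is the quadratic Gauss sum; since $\Legendre{a}{p}^{2m+1}=\Legendre{a}{p}$ for $a\neq0$, the bracket raised to the power $2m+1$ equals $\Legendre{a}{p}g^{2m+1}$.

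The one substantial input is the Gauss sum, and in fact only $g^2=\Legendre{-1}{p}p$ is needed, not the finer determination of the sign of $g$ itself; this is classical and elementary. For $p\equiv3\pmod4$ it reads $g^2=-p$, whence $g^{2m+1}=g\,(g^2)^m=(-1)^mp^m\,g$. The $a\neq0$ part of the sum thus becomes
$$
\frac{(-1)^mp^m g}{p}\sum_{a\neq0}\Legendre{a}{p}e_p(-at).
$$
If $t=0$ the inner sum vanishes, so $N_{2m+1}(0,p)=p^{2m}$. If $t\neq0$, the substitution $a\mapsto(-t)^{-1}a$ gives $\sum_{a}\Legendre{a}{p}e_p(-at)=\Legendre{-t}{p}g$, so the $a\neq0$ contribution is $\frac{(-1)^mp^m}{p}\Legendre{-t}{p}g^2=(-1)^{m+1}p^m\Legendre{-t}{p}$. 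Since $\Legendre{-1}{p}=-1$ for $p\equiv3\pmod4$, we have $\Legendre{-t}{p}=-\Legendre{t}{p}$, and distinguishing $\Legendre{t}{p}=1$ from $\Legendre{t}{p}=-1$ produces exactly the two stated formulae.

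I expect no real obstacle beyond bookkeeping: one must keep straight the powers of $p$, the factor $(-1)^m$ coming from $(g^2)^m$, and the sign $\Legendre{-1}{p}=-1$ across the residue/non-residue dichotomy. A convenient sanity check is $\sum_{t\in\mathbb{Z}_p}N_{2m+1}(t,p)=p^{2m+1}$: the $t=0$ term contributes $p^{2m}$, while the $(p-1)/2$ residues and $(p-1)/2$ non-residues contribute $\frac{p-1}{2}\bigl((-1)^m+(-1)^{m+1}\bigr)p^m=0$ beyond their $p^{2m}$ pieces, for a total of $p^{2m+1}$. Alternatively one could argue by induction on $m$ via the convolution $N_{2m+3}(t,p)=\sum_{s}N_2(t-s,p)\,N_{2m+1}(s,p)$ together with the known value of $N_2(\cdot,p)$, but the character-sum argument above is self-contained and I would prefer it.
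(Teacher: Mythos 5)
Your argument is correct, and it takes a genuinely different route from the paper. The paper computes $N_1$ and $N_2$ by elementary counting (equation \eqref{gamma1}, Lemma \ref{y2=x2+t} through Corollary \ref{k=1:m=2}), then packages the three values $N_m(0,p)$, $N_m(\text{non-residue},p)$, $N_m(\text{residue},p)$ into a vector $\gamma_m$ and derives the linear recurrence $\gamma_{m+1}=B\gamma_m$ of \eqref{gammam-mod4}, whose entries come from counting $z$ with $\Legendre{t-z^2}{p}=\pm1$ (Lemmas \ref{t-z^2modp1} and \ref{t-z^2modp2}); the theorem is then obtained by checking that the claimed formulae are propagated by $B^2$ starting from $\gamma_1$. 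You instead evaluate $N_{2m+1}(t,p)$ in closed form by additive characters, reducing everything to the single classical input $g^2=\Legendre{-1}{p}p$; your handling of the $a\neq 0$ terms, the identity $\sum_a\Legendre{a}{p}e_p(-at)=\Legendre{-t}{p}g$, and the sign bookkeeping through $\Legendre{-t}{p}=-\Legendre{t}{p}$ all check out, and your total-count sanity check $\sum_t N_{2m+1}(t,p)=p^{2m+1}$ confirms the case split. The character-sum route is shorter, treats all $m$ at once, and with no extra work also yields the even case and the $p\equiv1\pmod 4$ case (Theorems \ref{k=1:m odd:p=1mod4}--\ref{k=1:m even:p=3mod4}) from the same two lines; the paper's recurrence is more elementary, avoiding complex exponentials entirely, and the counting lemmas it rests on are reused elsewhere in the paper. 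The one thing you should make explicit is a proof or precise citation of $g^2=\Legendre{-1}{p}p$, since it is the sole nontrivial ingredient and the paper's own argument is self-contained at the level of Legendre-symbol counting.
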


\bigskip

\begin{theorem}
\label{k=1:m even:p=3mod4}
If $p$ is an odd prime with $p = 3 \pmod 4$ and $m \geq 1$, then in $\mathbb{Z}_p$
$$
N_{2m}(t, p) = 
\begin{cases}
    p^{2m-1} + (-1)^m p^m + (-1)^{m-1} p^{m-1}  & \text{if } \, \, t = 0 \\
    p^{2m-1} + (-1)^{m-1} p^{m-1}    & \text{if} \, \, t \neq 0.
\end{cases}
$$
\end{theorem}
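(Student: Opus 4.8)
The plan is to evaluate $N_{2m}(t,p)$ directly as an additive character sum over $\mathbb{Z}_p$ and to reduce everything to the classical quadratic Gauss sum; this is essentially the same computation that yields Theorems~\ref{k=1:m odd:p=1mod4}--\ref{k=1:m odd:p=3mod4}, only with the parities interchanged.

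Write $e_p(u)=e^{2\pi i u/p}$ and put $S(a)=\sum_{x\in\mathbb{Z}_p}e_p(ax^2)$. Using the orthogonality relation $\frac1p\sum_{a\in\mathbb{Z}_p}e_p\!\big(a(x_1^2+\cdots+x_{2m}^2-t)\big)=1$ when $x_1^2+\cdots+x_{2m}^2=t$ in $\mathbb{Z}_p$ and $=0$ otherwise, summing over all $(x_1,\dots,x_{2m})\in\mathbb{Z}_p^{2m}$ and factoring the resulting product gives
$$
N_{2m}(t,p)=\frac1p\sum_{a\in\mathbb{Z}_p}S(a)^{2m}\,e_p(-at).
$$
First I would split off the term $a=0$: since $S(0)=p$, it contributes $p^{2m}/p=p^{2m-1}$, the main term common to both cases.

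For $a\neq 0$ I would use the standard identity $S(a)=\Legendre{a}{p}\,g$, where $g=\sum_{x\in\mathbb{Z}_p}\Legendre{x}{p}e_p(x)$ is the Gauss sum, together with $g^2=\Legendre{-1}{p}\,p=-p$, the sign being where the hypothesis $p\equiv 3\pmod 4$ enters. The key point is that the exponent $2m$ is even, so $S(a)^{2m}=\Legendre{a}{p}^{2m}g^{2m}=g^{2m}=(-p)^m=(-1)^m p^m$ is \emph{independent of} $a$; this is precisely the structural reason the answer does not depend on the quadratic-residue class of $t$ (in the odd-exponent theorems one instead retains a factor $\Legendre{a}{p}$, which produces the three-case split). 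Hence the contribution of the nonzero $a$ is $(-1)^m p^{m-1}\sum_{a=1}^{p-1}e_p(-at)$, and the inner sum equals $p-1$ if $t=0$ and $-1$ if $t\neq0$. Adding back the $a=0$ term and using $(-1)^m p^{m-1}(p-1)=(-1)^m p^m+(-1)^{m-1}p^{m-1}$ gives exactly the two displayed cases.

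I do not anticipate a real obstacle: the only delicate points are the evaluation $S(a)=\Legendre{a}{p}g$ and the value of $g^2$ (the only place $p\equiv 3\pmod 4$ is used), plus keeping the signs straight at the end. If one prefers to avoid Gauss sums, an alternative is induction on $m$ via the convolution identity $N_{2m}(t,p)=\sum_{s\in\mathbb{Z}_p}N_{2m-1}(s,p)\,N_1(t-s,p)$ with $N_1(u,p)=1+\Legendre{u}{p}$, feeding in Theorem~\ref{k=1:m odd:p=3mod4}; there the residue-dependent terms collapse because $\sum_{s\in\mathbb{Z}_p}\Legendre{s(t-s)}{p}=-\Legendre{-1}{p}=1$ for $t\neq0$, which is then the one identity that needs to be checked with care.
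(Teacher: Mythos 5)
Your proposal is correct, and it takes a genuinely different route from the paper. The paper proves this theorem by first computing $\gamma_1$ and $\gamma_2$ by hand (via Lemma \ref{y2=x2+t} and Corollary \ref{k=1:m=2}), then setting up the transfer-matrix recurrence $\gamma_{m+1}=B\,\gamma_m$ on the vector $\bigl(N_m(0,p),\,N_m(\text{nonres},p),\,N_m(\text{res},p)\bigr)$, with the entries of $B$ supplied by the counting Lemmas \ref{t-z^2modp1} and \ref{t-z^2modp2}; the closed forms are then confirmed by checking that $B^2$ maps the claimed vector at index $m$ to the claimed vector at index $m+1$. You instead write $N_{2m}(t,p)=\frac1p\sum_a S(a)^{2m}e_p(-at)$ and evaluate everything through $S(a)=\Legendre{a}{p}g$ and $g^2=\Legendre{-1}{p}p=-p$; I checked the bookkeeping ($a=0$ gives $p^{2m-1}$, the nonzero $a$ give $(-1)^mp^{m-1}(p-1)$ or $-(-1)^mp^{m-1}$) and it reproduces both displayed cases, and your auxiliary Jacobi-sum identity $\sum_s\Legendre{s(t-s)}{p}=-\Legendre{-1}{p}$ is also correct. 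Your approach buys a one-line structural explanation of why the answer is independent of $\Legendre{t}{p}$ when the number of squares is even (the factor $\Legendre{a}{p}^{2m}$ disappears), and it does not require guessing the closed form in advance; it does import the evaluation of the quadratic Gauss sum. The paper's route is more elementary --- purely counting, no complex characters --- and the same matrix $B$ serves all four theorems \ref{k=1:m odd:p=1mod4}--\ref{k=1:m even:p=3mod4} uniformly, at the cost of having to verify the guessed formulas against $B^2$. Your fallback suggestion (induction via convolution with $N_1(u,p)=1+\Legendre{u}{p}$) is in substance the paper's recurrence, just organized through character sums rather than through the matrix $B$.
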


\bigskip

If we restrict ourselves to the sum of two squares in the ring $\mathbb{Z}_{p^k}$ we have the following:

\bigskip

\begin{theorem}
\label{m=2:p=1mod4}
Let $p$ be an odd prime with $p = 1 \pmod 4$ and let $k \geq 1$. Let $t \in \mathbb{Z}_{p^k}$ and write
$t = p^{\alpha}\beta$ where $p \nmid \beta$. Then in $\mathbb{Z}_{p^k}$
$$
N_{2}(t, p^k) = 
\begin{cases}
    p^{k-1} \left( p(k+1) - k \right)  & \text{if } \, \, t = 0 \\
    (\alpha + 1)(p - 1)p^{k-1}    & \text{if} \, \, t \neq 0
\end{cases}
$$
\end{theorem}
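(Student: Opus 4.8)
The plan is to exploit the hypothesis $p \equiv 1 \pmod 4$ through the factorization $x^2 + y^2 = (x+iy)(x-iy)$, which requires a square root of $-1$ modulo $p^k$. Since $p \equiv 1 \pmod 4$, the unit group $\mathbb{Z}_{p^k}^*$ is cyclic of order $p^{k-1}(p-1)$, a multiple of $4$; it therefore contains an element of order $4$ whose square is the unique element of order $2$, namely $-1$. Fix $i \in \mathbb{Z}_{p^k}$ with $i^2 = -1$. Because $2$ is a unit modulo $p^k$, the map $(x,y) \mapsto (x+iy,\, x-iy)$ is a bijection of $\mathbb{Z}_{p^k}^2$ onto itself, with inverse $(u,v) \mapsto \bigl(2^{-1}(u+v),\, (2i)^{-1}(u-v)\bigr)$, and it carries the equation $x^2 + y^2 = t$ to $uv = t$. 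Hence $N_2(t,p^k) = \#\{(u,v) \in \mathbb{Z}_{p^k}^2 : uv \equiv t \pmod{p^k}\}$, and the problem is reduced to counting fibres of the multiplication map on $\mathbb{Z}_{p^k}$.

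To carry out that count I would stratify by the $p$-adic valuation of $u$: for $0 \le j \le k-1$ there are exactly $\varphi(p^{k-j}) = p^{k-j-1}(p-1)$ elements $u \in \mathbb{Z}_{p^k}$ with $v_p(u) = j$, and the single element $0$, which behaves as an element of valuation $\ge k$. Writing $u = p^j w$ with $w$ a unit and $t = p^{\alpha}\beta$, $p \nmid \beta$, $\alpha < k$: if $j \le \alpha$ the congruence $uv \equiv t \pmod{p^k}$ is equivalent to $v \equiv p^{\alpha-j}\beta w^{-1} \pmod{p^{k-j}}$, which has exactly $p^{j}$ solutions $v$ modulo $p^k$; if $j > \alpha$ there is no solution, since the left-hand side then has valuation exceeding $\alpha$. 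Summing over $j$ gives $N_2(t,p^k) = \sum_{j=0}^{\alpha} p^{k-j-1}(p-1)\cdot p^{j} = (\alpha+1)(p-1)p^{k-1}$, which is the $t \ne 0$ case.

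For $t = 0$ the condition $uv \equiv 0 \pmod{p^k}$ is $v_p(u) + v_p(v) \ge k$. For $u$ with $v_p(u) = j \le k-1$ the admissible $v$ are those with $v_p(v) \ge k-j$, of which there are $p^{j}$, while for $u = 0$ all $p^k$ values of $v$ work. Therefore $N_2(0,p^k) = \sum_{j=0}^{k-1} p^{k-j-1}(p-1)\cdot p^{j} + p^{k} = k(p-1)p^{k-1} + p^{k} = p^{k-1}\bigl(p(k+1)-k\bigr)$, as claimed.

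The substantive points are the opening remark that $-1$ is a square modulo $p^k$ when $p \equiv 1 \pmod 4$ — this is precisely where the congruence hypothesis is used and what makes the linear change of variables available — and the careful valuation bookkeeping in the final count, in particular treating $0 \in \mathbb{Z}_{p^k}$ as a separate case of valuation $\ge k$ in both the $t = 0$ and $t \ne 0$ computations. Everything else is routine summation of geometric-type series. (One could alternatively prove the formula by induction on $k$, lifting solutions from $\mathbb{Z}_{p^{k-1}}$ to $\mathbb{Z}_{p^k}$, or via Gauss sums; the factorization argument above is the most economical.)
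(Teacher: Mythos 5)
Your proposal is correct and follows essentially the same route as the paper: the change of variables $u = x + iy$, $v = x - iy$ (available because $p \equiv 1 \pmod 4$ makes $-1$ a square modulo $p^k$) reduces the problem to counting solutions of $uv \equiv t \pmod{p^k}$, which both you and the paper then settle by stratifying according to the $p$-adic valuation of $u$. The only differences are cosmetic: you justify the existence of $\sqrt{-1}$ via the cyclic unit group rather than Hensel's lemma, and you organize the $t=0$ count by valuation of $u$ rather than the paper's ``$u=0$ or $v=0$ or $p^{\alpha}\,\|\,u$, $p^{k-\alpha}\mid v$'' decomposition, but the arithmetic agrees.
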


\bigskip

\begin{theorem}
\label{m=2:p=3mod4}
Let $p$ be an odd prime with $p = 3 \pmod 4$ and let $k \geq 1$. Let $t \in \mathbb{Z}_{p^k}$ and write
$t = p^{\alpha}\beta$ where $p \nmid \beta$. Then in $\mathbb{Z}_{p^k}$
$$
N_{2}(t, p^k) = 
\begin{cases}
    p^{2 \lfloor \frac{k}{2} \rfloor}  & \text{if } \, \, t = 0 \\
    (p + 1)p^{k-1}    & \text{if} \, \, \alpha \, \, \text{is even} \\
    0    & \text{if} \, \, \alpha \, \, \text{is odd}
\end{cases}
$$
\end{theorem}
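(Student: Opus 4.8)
The plan is to interpret $N_2(t,p^k)$ through the Gaussian integers. Since $p \equiv 3 \pmod 4$, the prime $p$ is inert in $\mathbb{Z}[i]$, so $R := \mathbb{Z}[i]/p^k\mathbb{Z}[i]$ is a finite chain ring (its ideals are $R \supsetneq pR \supsetneq \cdots \supsetneq p^kR = 0$) with residue field $\mathbb{F}_{p^2}$, $|R| = p^{2k}$, and $|R^*| = p^{2k-2}(p^2-1)$. Writing $z = x + iy$ with $x,y \in \mathbb{Z}_{p^k}$ identifies $R$ with $\mathbb{Z}_{p^k}^2$ as a set, and the multiplicative norm $N(z) := z\bar z = x^2+y^2$ takes values in $\mathbb{Z}_{p^k}$. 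Hence $N_2(t,p^k) = \#\{z \in R : N(z) = t\}$, and the task is to count the fibres of $N \colon R \to \mathbb{Z}_{p^k}$.

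The key lemma I would prove first is that $N \colon R^* \to \mathbb{Z}_{p^k}^*$ is surjective. Indeed $N(s) = s^2$ for $s \in \mathbb{Z}_{p^k}^* \subseteq R^*$, so $N(R^*)$ contains the squares, which form an index-$2$ subgroup of the cyclic group $\mathbb{Z}_{p^k}^*$; but reduction mod $p$ sends $N$ to the finite-field norm $\mathbb{F}_{p^2}^* \to \mathbb{F}_p^*$, which is onto, so $N(R^*)$ is not contained in the squares. Thus $N(R^*) = \mathbb{Z}_{p^k}^*$, and since $N|_{R^*}$ is a homomorphism of finite abelian groups all its fibres have size $|R^*|/|\mathbb{Z}_{p^k}^*| = (p+1)p^{k-1}$. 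This already disposes of the case $\alpha = 0$: if $t$ is a unit then $N(z) = t$ forces $z \in R^*$, so $N_2(t,p^k) = (p+1)p^{k-1}$.

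For the remaining cases I would stratify $R$ by the $p$-adic valuation $v$ (with $v(z) = j$ meaning $z \in p^jR \setminus p^{j+1}R$, $0 \le j \le k-1$). If $v(z) = j$ then $N(z) = p^{2j}\cdot(\text{unit})$ in $\mathbb{Z}_{p^k}$; in particular $v(N(z))$ is even whenever it is finite. Writing $t = p^\alpha\beta$ with $\beta$ a unit: when $\alpha$ is odd there is no $z$ with $N(z) = t$, giving $N_2(t,p^k) = 0$. When $t = 0$, the solutions are $z = 0$ together with those $z$ having $2j \ge k$, and summing $\#\{v = j\} = p^{2(k-j)} - p^{2(k-j-1)}$ over $\lceil k/2\rceil \le j \le k-1$ the sum telescopes to $p^{2\lfloor k/2\rfloor}$. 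When $\alpha = 2b$ is even, every solution has $v(z) = b$, so I write $z = p^bw$ with $w \in R^*$; the map $w \mapsto p^bw$ is $p^{2b}$-to-$1$ onto $\{v = b\}$ (the kernel of $R^* \to (\mathbb{Z}[i]/p^{k-b})^*$), and $N(z) = t$ is equivalent to $N(w) \equiv \beta \pmod{p^{k-2b}}$. Counting such $w$ via the surjection $R^* \to (\mathbb{Z}[i]/p^{k-2b})^*$ (kernel size $p^{4b}$) and the key lemma at level $k-2b$ (fibre size $(p+1)p^{k-2b-1}$) yields $(p+1)p^{k+2b-1}$ choices of $w$, hence $N_2(t,p^k) = p^{-2b}(p+1)p^{k+2b-1} = (p+1)p^{k-1}$.

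The order computations are routine; the one delicate point is the $\alpha = 2b$ case, where one must correctly track both the $p^{2b}$-to-$1$ multiplicity of $z \mapsto z/p^b$ and the reduction of the modulus from $p^k$ to $p^{k-2b}$ --- mishandling either factor shifts the final exponent. (As an alternative, the surjectivity lemma and indeed the whole theorem can be obtained by Hensel-type lifting and induction on $k$, but the chain-ring argument is more transparent.)
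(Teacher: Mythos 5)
Your proof is correct, but it takes a genuinely different route from the paper. The paper's argument is elementary and case-by-case: it first observes that $x^2+y^2\equiv 0\pmod p$ has only the trivial solution, so $\ord_p(x^2+y^2)=\min\{\ord_p(x^2),\ord_p(y^2)\}$, which disposes of $t=0$ and odd $\alpha$; for $p\nmid t$ it performs a Hensel-type lift showing each solution mod $p^{k-1}$ extends to exactly $p$ solutions mod $p^k$, reducing to the known count $p+1$ in $\mathbb{Z}_p$; and for even $\alpha>0$ it divides the congruence through by $p^2$ repeatedly and multiplies back the resulting multiplicities. You instead read $x^2+y^2$ as the norm form on the inert chain ring $\mathbb{Z}[i]/p^k\mathbb{Z}[i]$, and your key lemma --- surjectivity of the norm on units, hence equal fibres of size $|R^*|/|\mathbb{Z}_{p^k}^*|=(p+1)p^{k-1}$ --- replaces both the Hensel lift and the appeal to the $k=1$ theorem, while your valuation stratification subsumes the paper's $t=0$, odd-$\alpha$ and descent computations; all your order counts check out (including the $p^{2b}$-to-$1$ multiplicity of $w\mapsto p^bw$ and the kernel size $p^{4b}$ of $R^*\to(\mathbb{Z}[i]/p^{k-2b})^*$, which correctly cancel to give $(p+1)p^{k-1}$). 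It is worth noting that your argument is the exact inert-prime analogue of what the paper itself does for $p\equiv 1\pmod 4$, where it substitutes $u=x+y\sqrt{-1}$, $v=x-y\sqrt{-1}$ and counts solutions of $uv=t$; your version explains conceptually why the answer for units is independent of $t$ (fibres of a group homomorphism), at the cost of importing the structure theory of $\mathbb{Z}[i]/p^k$, whereas the paper's lifting argument stays entirely inside $\mathbb{Z}_{p^k}$.
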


\bigskip

For the sum of three squares in the ring $\mathbb{Z}_{p^k}$ we have:

\bigskip

\begin{theorem}
\label{m=3:p=1mod4}
Let $p$ be an odd prime with $p = 1 \pmod 4$ and let $k \geq 1$. Let $t \in \mathbb{Z}_{p^k}$ and write
$t = p^{\alpha}\beta$ where $p \nmid \beta$. Then in $\mathbb{Z}_{p^k}$
$$
N_{3}(t, p^k) = 
\begin{cases}
    p^{2k} + p^{2k-1} - p^{\lceil\frac{3k}{2}\rceil - 1}  & \text{if } \, \, t = 0 \\
    (p^{2k - 1} - p^{2k - \frac{\alpha + 3}{2}})(p+1)   & \text{if} \, \, \alpha \text{ is odd} \\
    p^{2k-1}(p + 1)   & \text{if} \, \, \alpha \, \,\text{ is even and} \, \, \Legendre{\beta}{p} = 1 \\
    p^{2k-1}(p+1) - 2p^{2k - 1-\frac{\alpha}{2}}   & \text{if} \, \, \alpha \, \, \text{ is even and} \, \,\Legendre{\beta}{p} = -1.
\end{cases}
$$
\end{theorem}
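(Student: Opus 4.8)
The plan is to evaluate $N_{3}(t,p^{k})$ by the additive-character (Gauss sum) method and then to sort the resulting terms by $\alpha=v_{p}(t)$. Write $e_{q}(x)=e^{2\pi i x/q}$ and let $S(a)=\sum_{x\bmod p^{k}}e_{p^{k}}(ax^{2})$ be the quadratic Gauss sum modulo $p^{k}$. Detecting the condition $x_{1}^{2}+x_{2}^{2}+x_{3}^{2}\equiv t$ with the characters of $\mathbb{Z}_{p^{k}}$ gives
\[
N_{3}(t,p^{k})=\frac{1}{p^{k}}\sum_{a\bmod p^{k}}S(a)^{3}\,e_{p^{k}}(-at).
\]
First I would group the outer sum by $j=v_{p}(a)$: writing $a=p^{j}b$ with $p\nmid b$ (and treating $a=0$ on its own, which contributes $S(0)^{3}/p^{k}=p^{2k}$), one has $e_{p^{k}}(p^{j}bx^{2})=e_{p^{k-j}}(bx^{2})$, so $S(p^{j}b)=p^{j}G_{k-j}(b)$, where $G_{m}(b)=\sum_{x\bmod p^{m}}e_{p^{m}}(bx^{2})$ and $G_{0}(b)=1$.

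Next I would invoke the classical evaluation of the quadratic Gauss sum to a prime-power modulus: for $p$ odd, $m\ge 1$ and $p\nmid b$,
\[
G_{m}(b)=\Legendre{b}{p}^{m}\varepsilon_{p^{m}}\,p^{m/2},
\]
where $\varepsilon_{q}=1$ if $q\equiv 1\pmod 4$ and $\varepsilon_{q}=i$ if $q\equiv 3\pmod 4$. Here the hypothesis $p\equiv 1\pmod 4$ enters decisively: $p^{m}\equiv 1\pmod 4$ for every $m$, so $\varepsilon_{p^{m}}=1$ and $G_{m}(b)=\Legendre{b}{p}^{m}p^{m/2}$. Cubing, using $\Legendre{b}{p}^{3m}=\Legendre{b}{p}^{m}$, and re-indexing by $m=k-j$, the whole expression collapses to
\[
N_{3}(t,p^{k})=p^{2k}+\sum_{m=1}^{k}p^{\,2k-3m/2}\,T_{m},
\qquad
T_{m}:=\sum_{\substack{b\bmod p^{m}\\ p\nmid b}}\Legendre{b}{p}^{m}e_{p^{m}}(-bt).
\]

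The heart of the argument, and what I expect to be the main obstacle, is the evaluation of $T_{m}$, since it bifurcates on the parity of $m$ and must then be matched against the parity of $\alpha$. If $m$ is even, $\Legendre{b}{p}^{m}=1$ and $T_{m}$ is the Ramanujan sum $c_{p^{m}}(-t)$, equal to $p^{m}-p^{m-1}$ if $\alpha\ge m$, to $-p^{m-1}$ if $\alpha=m-1$, and to $0$ if $\alpha\le m-2$. If $m$ is odd, $\Legendre{b}{p}$ depends only on $b\bmod p$; summing first over each residue class modulo $p$ shows $T_{m}=0$ unless $p^{m-1}\mid t$, the case $\alpha\ge m$ then dies because $\sum_{b_{0}\bmod p}\Legendre{b_{0}}{p}=0$, and in the one surviving case $\alpha=m-1$ one gets $T_{m}=p^{m-1}\sum_{b_{0}\bmod p}\Legendre{b_{0}}{p}e_{p}(-b_{0}\beta)=p^{m-1}\Legendre{-\beta}{p}\sqrt{p}=\Legendre{\beta}{p}p^{\,m-1/2}$, using $\Legendre{-1}{p}=1$ and the elementary quadratic Gauss sum $\sum_{b_{0}}\Legendre{b_{0}}{p}e_{p}(b_{0})=\sqrt{p}$, valid precisely because $p\equiv 1\pmod 4$.

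Finally I would assemble the four cases and carry out the (telescoping) geometric sums. For $t=0$ only the even $m$ survive, and $\sum_{2\le m\le k,\ m\ \mathrm{even}}p^{2k-3m/2}(p^{m}-p^{m-1})$ telescopes to $p^{2k-1}-p^{\,2k-\lfloor k/2\rfloor-1}$; the identity $2k-\lfloor k/2\rfloor-1=\lceil 3k/2\rceil-1$ yields the stated first line. For $t\neq 0$ the nonzero contributions come exactly from the terms with $m$ even and $m\le\alpha$ (each contributing $p^{2k-m/2}-p^{2k-m/2-1}$) together with the single term $m=\alpha+1$; whether that extra term is the odd-$m$ Gauss-sum term or the even-$m$ Ramanujan term is precisely the split ``$\alpha$ even'' versus ``$\alpha$ odd'', and $\Legendre{\beta}{p}$ surfaces only in the former. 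A short computation of the exponents (the half-integer powers coming from odd-$m$ Gauss sums combine into integers exactly where needed) then produces the remaining three lines. Beyond this bookkeeping everything is routine; the one genuine external input is the prime-power Gauss sum evaluation quoted above. (An alternative, if one preferred, would be a Hensel-lifting recursion expressing $N_{3}(\cdot,p^{k})$ in terms of $N_{3}(\cdot,p^{k-2})$ with base cases supplied by Theorems~\ref{k=1:m odd:p=1mod4} and~\ref{k=1:m even:p=1mod4}, but the character-sum route is more uniform.)
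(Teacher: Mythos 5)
Your proposal is correct, but it takes a genuinely different route from the paper. The paper computes $N_{3}(t,p^{k})$ by the decomposition $N_{3}(t,p^{k})=\sum_{s}N_{2}(s,p^{k})\,\lvert\{z:t-z^{2}=s\}\rvert$, feeding in the already-proved two-square formula (Theorem \ref{m=2:p=1mod4}) together with a battery of lemmas that count, for each $\gamma$, the number of $z$ with $p^{\gamma}\,\|\,t-z^{2}$, split according to the parity of $\alpha$ and the value of $\Legendre{\beta}{p}$; each of the four cases is then a separate elementary summation. You instead expand $N_{3}$ as a complete exponential sum, reduce it to prime-power quadratic Gauss sums and Ramanujan sums, and read everything off from $\alpha=v_{p}(t)$. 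I checked your bookkeeping and it closes: the even-$m$ telescoping gives $p^{2k-1}-p^{2k-\lfloor k/2\rfloor-1}$ for $t=0$ with $2k-\lfloor k/2\rfloor=\lceil 3k/2\rceil$; for $\alpha$ odd the even term $m=\alpha+1$ contributes $-p^{2k-(\alpha+3)/2}$ and the even $m\le\alpha-1$ telescope to $p^{2k-1}-p^{2k-(\alpha+1)/2}$, which together with $p^{2k}$ equals $(p^{2k-1}-p^{2k-(\alpha+3)/2})(p+1)$; for $\alpha$ even the odd term $m=\alpha+1$ contributes $\Legendre{\beta}{p}\,p^{2k-\alpha/2-1}$, which either cancels or doubles the tail $-p^{2k-\alpha/2-1}$, giving the last two lines. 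Your route buys uniformity, independence from the $N_{2}$ theorem, and an evident extension to $p\equiv 3\pmod 4$ (only the factors $\varepsilon_{p^{m}}\in\{1,i\}$ change, which explains the swap of the last two cases in Theorem \ref{m=3:p=3mod4}); its one external input is the classical evaluation $G_{m}(b)=\Legendre{b}{p}^{m}\varepsilon_{p^{m}}p^{m/2}$, whereas the paper's argument is self-contained and entirely elementary, reusing machinery already built for the two-square case.
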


\bigskip

\begin{theorem}
\label{m=3:p=3mod4}
Let $p$ be an odd prime with $p = 3 \pmod 4$ and let $k \geq 1$. Let $t \in \mathbb{Z}_{p^k}$ and write
$t = p^{\alpha}\beta$ where $p \nmid \beta$. Then in $\mathbb{Z}_{p^k}$,
$$
N_{3}(t, p^k) = 
\begin{cases}
    p^{2k} + p^{2k-1} - p^{\lceil\frac{3k}{2}\rceil - 1}  & \text{if } \, \, t = 0 \\
    (p^{2k - 1} - p^{2k - \frac{\alpha + 3}{2}})(p+1)   & \text{if} \, \, \alpha \, \, \text{ is odd} \\
    p^{2k-1}(p+1) - 2p^{2k - 1-\frac{\alpha}{2}}  & \text{if} \, \, \alpha \, \, \text{ is even and} \, \, \Legendre{\beta}{p} = 1 \\
    p^{2k-1}(p + 1) & \text{if} \, \, \alpha \, \, \text{ is even and} \, \, \Legendre{\beta}{p} = -1.
\end{cases}
$$
\end{theorem}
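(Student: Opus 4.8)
\emph{Proof sketch.} The plan is to evaluate $N_{3}(t,p^{k})$ by finite Fourier analysis on $\mathbb{Z}_{p^{k}}$ and reduce everything to classical quadratic Gauss sums. Writing $e(x)=e^{2\pi i x}$ and using $\frac{1}{p^{k}}\sum_{a\bmod p^{k}}e(a(x-t)/p^{k})=1$ if $x\equiv t$ and $0$ otherwise, one gets
$$
p^{k}N_{3}(t,p^{k})=\sum_{a\bmod p^{k}}e\!\left(\frac{-at}{p^{k}}\right)S(a)^{3},\qquad S(a)=\sum_{x\bmod p^{k}}e\!\left(\frac{ax^{2}}{p^{k}}\right).
$$
First I would stratify the sum over $a$ by its $p$-adic valuation: the term $a=0$ contributes $p^{3k}$, and for $1\le l\le k$ the terms $a=p^{k-l}b$ with $p\nmid b$ contribute $p^{3(k-l)}\sum_{p\nmid b\ (\mathrm{mod}\ p^{l})}g(b,p^{l})^{3}\,e(-bt/p^{l})$, where $g(b,p^{l})=\sum_{x\bmod p^{l}}e(bx^{2}/p^{l})$ and $S(p^{k-l}b)=p^{k-l}g(b,p^{l})$. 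I would then invoke the classical evaluation: for odd $p$ one has $g(b,p^{l})=p^{l/2}$ when $l$ is even and $g(b,p^{l})=\Legendre{b}{p}\,\varepsilon_{p}\,p^{(l-1)/2}\sqrt{p}$ when $l$ is odd, where $\varepsilon_{p}=i$ since $p\equiv 3\pmod 4$.

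The next step is to evaluate the inner sum over $b$ in each parity. For $l$ even, $g(b,p^{l})^{3}=p^{3l/2}$ is independent of $b$, so the sum is $p^{3l/2}c_{p^{l}}(t)$ with $c_{p^{l}}(t)=\sum_{p\nmid b}e(-bt/p^{l})$ the Ramanujan sum, equal to $p^{l-1}(p-1)$ if $p^{l}\mid t$, to $-p^{l-1}$ if $v_{p}(t)=l-1$, and to $0$ if $v_{p}(t)\le l-2$. For $l$ odd, $g(b,p^{l})^{3}=\Legendre{b}{p}^{3}\varepsilon_{p}^{3}p^{3l/2}=-i\,\Legendre{b}{p}\,p^{3l/2}$, so the sum is $-i\,p^{3l/2}\sum_{p\nmid b}\Legendre{b}{p}e(-bt/p^{l})$; decomposing $b$ by its residue mod $p$ shows this twisted sum vanishes unless $v_{p}(t)=l-1$, in which case, writing $t=p^{\alpha}\beta$ with $\alpha=l-1$, it equals $p^{l-1}\Legendre{-\beta}{p}\,g(1,p)=p^{l-1}\Legendre{-\beta}{p}\,i\sqrt{p}$, and $\Legendre{-\beta}{p}=-\Legendre{\beta}{p}$ because $p\equiv 3\pmod 4$. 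In every case the powers of $i$ and $\sqrt{p}$ combine to a rational integer, as they must.

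Finally I would reassemble the strata and sum the resulting geometric series, splitting into the three cases of the statement. If $t=0$, only the even levels $l=2,4,\dots,2\lfloor k/2\rfloor$ survive, level $l$ contributing $(p-1)p^{3k-1-l/2}$, and after dividing by $p^{k}$ the geometric sum collapses to $p^{2k}+p^{2k-1}-p^{2k-1-\lfloor k/2\rfloor}=p^{2k}+p^{2k-1}-p^{\lceil 3k/2\rceil-1}$. If $t=p^{\alpha}\beta\ne 0$, the even levels $l\le\alpha$ give the same geometric sum truncated at $l=2\lfloor\alpha/2\rfloor$, and exactly one further level, $l=\alpha+1$, contributes: through the Ramanujan sum when $\alpha$ is odd (so $l=\alpha+1$ is even), yielding the correction $-(p+1)p^{2k-(\alpha+3)/2}$; or through the twisted Gauss sum when $\alpha$ is even (so $l=\alpha+1$ is odd), yielding the correction $-(1+\Legendre{\beta}{p})p^{2k-1-\alpha/2}$, which is $-2p^{2k-1-\alpha/2}$ or $0$ according as $\Legendre{\beta}{p}=1$ or $\Legendre{\beta}{p}=-1$. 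Collecting these gives precisely the four displayed cases.

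The step I expect to be the real work is this last assembly: one must track carefully which residue class $l=\alpha+1$ lands in (even vs.\ odd, hence Ramanujan vs.\ twisted sum), handle the boundary cases where $\alpha$ is close to $k-1$ or $k$ is so small that few strata appear, and check that the exponents produced by the geometric series simplify to the stated floor/ceiling expressions. A useful consistency check, which also shortens the write-up, is that this computation is formally identical to the one behind Theorem \ref{m=3:p=1mod4}: the sole difference is the value of $\Legendre{-1}{p}$ in the odd-level terms, and since those terms are nonzero only when $\alpha=l-1$ is even, flipping that sign merely interchanges the sub-cases $\Legendre{\beta}{p}=1$ and $\Legendre{\beta}{p}=-1$ within the ``$\alpha$ even'' regime, leaving the $t=0$ and ``$\alpha$ odd'' formulas untouched --- exactly the difference one observes between the two theorems.
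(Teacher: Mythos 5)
Your argument is correct, but it is a genuinely different route from the paper's. The paper proves this theorem elementarily: it feeds the already-established two-square counts of Theorem \ref{m=2:p=3mod4} into the convolution $N_{3}(t,p^k)=\sum_{s}N_{2}(s,p^k)\,\lvert\{z: t-z^2=s\}\rvert$, and the bulk of the work is in Lemmas \ref{lemma0}--\ref{lemma4}, which count $\lvert\{z: p^{\gamma}\,\|\,t-z^2\}\rvert$ for each $\gamma$ by direct $p$-adic bookkeeping; the four cases of the theorem are then obtained by summing the resulting (finite geometric) series one at a time. You instead expand the indicator of the congruence in additive characters, reduce to cubes of quadratic Gauss sums stratified by the valuation of the frequency $a$, and evaluate the even levels via Ramanujan sums and the single surviving odd level via the twisted sum $\sum_{b}\Legendre{b}{p}e(-b\beta/p)=\Legendre{-\beta}{p}\,\varepsilon_p\sqrt{p}$; I have checked that the combination $\varepsilon_p^3\cdot\varepsilon_p\cdot\Legendre{-1}{p}=-1$ for $p\equiv 3\pmod 4$ produces exactly the coefficient $-(1+\Legendre{\beta}{p})$ in the $\alpha$-even case, and that the reassembled geometric sums reproduce all four displayed formulas, including the exponent $\lceil 3k/2\rceil-1$ at $t=0$. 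What each approach buys: the paper's method is self-contained and elementary but is tied to the inductive chain $N_2\to N_3$ (each new $m$ needs new counting lemmas), whereas your Gauss-sum computation needs nothing from Sections 5--6, treats $p\equiv 1$ and $p\equiv 3\pmod 4$ uniformly (your closing observation that the sign flip merely swaps the two $\alpha$-even subcases relative to Theorem \ref{m=3:p=1mod4} is exactly right), and extends verbatim to $N_m$ for arbitrary $m$ by replacing $S(a)^3$ with $S(a)^m$ --- which is essentially the route taken in the literature the introduction cites for the general case. The only caveat is that yours is a sketch: a full write-up would still need to verify the boundary behaviour at $\alpha=k-1$ and the small-$k$ strata you flag, but there is no gap in the idea.
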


\bigskip

\section{Notation}
\label{prelim}

We will make use of the Legendre symbol $\Legendre{t}{p}$. The floor of the real number $x$ is written $\lfloor x \rfloor$ and defined as the largest integer $\leq x$. The ceiling of the real number $x$ is written $\lceil x \rceil$ and defined as the smallest integer $\geq x$. If $A$ is a set, we denote the number of elements in $A$ by $\lvert A \rvert$.

When discussing divisibility of an integer $x$ by a prime $p$, we will use the notation $p^{n} \| x$ to mean that $n$ is the highest power of $p$ dividing $x$, i.e. $p^n \mid x$ and $p^{n+1} \nmid x$. We will also use the notation $\ord_p(x) = n$ to mean that $n$ is the highest power of $p$ dividing $x$.

For integers $n \in  \mathbb{N}$, we define the sets $S_{m}(t, n)$ and $S^*_{m}(t, n)$ as follows:
\begin{equation}
\label{Sset}
S_{m}(t, n) := \, \,  \Big\{  (x_1, x_2, ... , x_m)  \in \mathbb{Z}_{n}: \sum_{i=1}^{m} x_i ^2 \equiv t \pmod {n} \Big\} 
\end{equation}

\begin{equation}
\label{Sstarset}
S^*_{m}(t, n) := \, \,  \Big\{  (x_1, x_2, ... , x_m)  \in \mathbb{Z}^*_{n}: \sum_{i=1}^{m} x_i ^2 \equiv t \pmod {n} \Big\} 
\end{equation}

We then have $N_{m}(t, n)  = \lvert S_{m}(t, n) \rvert$ and  $N^*_{m}(t, n)  = \lvert S^*_{m}(t, n) \rvert$.

\bigskip 

Our first observation is that the value of $N_{m}(t, p)$ depends only on $\Legendre{t}{p}$.

\bigskip

\begin{lemma}
\label{resornotmodp}
Let $m \geq 1$ and suppose $s, t \in \mathbb{Z}_{p}$ with $\Legendre{s}{p} = \Legendre{t}{p}$. Then $N_{m}(s, p) = N_{m}(t, p)$.
\end{lemma}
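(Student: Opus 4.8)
The plan is to exploit the multiplicative action of $\mathbb{Z}_p^*$ on $\mathbb{Z}_p$ by scaling the coordinates. Suppose $s, t \in \mathbb{Z}_p$ have $\Legendre{s}{p} = \Legendre{t}{p}$. The first case to dispose of is $s = t = 0$, where the statement is trivial. Otherwise both $s$ and $t$ are nonzero and $s/t$ is a nonzero square modulo $p$, say $s/t \equiv c^2 \pmod p$ for some $c \in \mathbb{Z}_p^*$. I would then define the map $\varphi : \mathbb{Z}_p^m \to \mathbb{Z}_p^m$ by $\varphi(x_1, \dots, x_m) = (c x_1, \dots, c x_m)$.

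The key step is to check that $\varphi$ restricts to a bijection between $S_m(t, p)$ and $S_m(s, p)$. It is a bijection on $\mathbb{Z}_p^m$ because $c$ is a unit, with inverse given by multiplication by $c^{-1}$. And if $(x_1, \dots, x_m) \in S_m(t, p)$, i.e. $\sum_{i=1}^m x_i^2 \equiv t \pmod p$, then $\sum_{i=1}^m (c x_i)^2 \equiv c^2 t \equiv s \pmod p$, so $\varphi(x_1, \dots, x_m) \in S_m(s, p)$; the same computation with $c^{-1}$ shows the reverse inclusion of images. Hence $\lvert S_m(t, p) \rvert = \lvert S_m(s, p) \rvert$, which is exactly $N_m(t, p) = N_m(s, p)$.

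There is essentially no obstacle here — the only subtlety is making sure the $t = 0$ case is handled separately, since then $s/t$ is undefined; but in that case $\Legendre{s}{p} = \Legendre{t}{p} = 0$ forces $s = 0$ as well (recalling the convention $\Legendre{0}{p} = 0$), so the two sets are literally identical. I would also remark, for later use, that the same scaling argument works verbatim modulo $p^k$ whenever $s \equiv c^2 t \pmod{p^k}$ with $c$ a unit, though the lemma as stated only needs the prime case.
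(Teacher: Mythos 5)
Your proposal is correct and follows essentially the same route as the paper: both scale the coordinates by a square root of the ratio $s/t$ (the paper writes $z=\sqrt{t/s}$) and observe that this gives a bijection between $S_m(t,p)$ and $S_m(s,p)$. Your explicit treatment of the $s=t=0$ case is a minor tidiness improvement over the paper's proof, which leaves that degenerate case implicit.
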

\begin{proof}
If $\Legendre{s}{p} = \Legendre{t}{p}$ then $t/s$ is a quadratic residue $\pmod p$. Let $z = \sqrt{t/s}$ in $\mathbb{Z}_{p}$. Then
$$
\sum_{i=1}^{m} (x_i)^2 = s \iff \sum_{i=1}^{m} (x_i * z)^2 = t.
$$
\end{proof}

\bigskip

According to the lemma above, all information in $\mathbb{Z}_{p}$  about the function $N_{m}$ is contained in a $3 \times 1$ vector which we will call $\gamma_{m}$ and define by
\begin{equation}
  \label{gamma}
\gamma_{m} := \begin{pmatrix} N_{m}(0, p) \\ N_{m}(s, p) \\ N_{m}(t, p) \end{pmatrix}
\end{equation}
where $s$ is any non-residue and $t$ is any residue $\pmod p$. We denote the components of $\gamma_m$ by $\gamma_{m, 1}$, $\gamma_{m, 2}$ and $\gamma_{m, 3}$.

\bigskip

\section{The $\mathbb{Z}_{2^k}$ case: proof of theorems \ref{Nstar:m:p=2} - \ref{m=3:p=2}}
\label{Z_2k}

\subsection{Preliminaries}
We first provide some results which are required for the proof of theorems  \ref{Nstar:m:p=2} - \ref{m=3:p=2}. The following classical result was published in 1834 \cite{ramus1834}.
\begin{equation}
  \tag{Ramus' Identity}
  \sum_j \binom{k}{t + nj} = \frac{1}{n} \sum_{j=1}^{n} (2 \cos (\frac{j \pi}{n}))^k \cos (\frac{(k - 2t) j \pi }{n})
  \label{eqn:Ramus}
\end{equation}

\bigskip

\begin{lemma}
\label{x2=1mod8}
Let $t \in \mathbb{Z}_{2^k}$ with  $k \geq 3$ and $2 \nmid t$. Then,  
$$
N_1(t, 2^k) = \Big\lvert \Big\{ x \in \mathbb{Z}_{2^k}: x^2 \equiv t \pmod {2^k} \Big\} \Big\rvert =
\begin{cases}
   4 & \text{if } \, \, t \equiv 1 \pmod 8  \\
   0 & \text{otherwise.}
\end{cases}
$$
\end{lemma}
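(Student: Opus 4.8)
The plan is to exploit the group structure of the units modulo $2^k$ together with an explicit $2$-adic lifting argument. Since $t$ is odd, any $x$ with $x^2 \equiv t \pmod{2^k}$ is itself odd, hence a unit; so $N_1(t, 2^k)$ counts solutions inside $\mathbb{Z}^*_{2^k}$, and the map $\sigma \colon x \mapsto x^2$ on $\mathbb{Z}^*_{2^k}$ is a group homomorphism whose non-empty fibres are the cosets of $\ker \sigma$ and hence all have cardinality $\lvert \ker \sigma \rvert$. So the whole problem reduces to (i) computing $\lvert \ker\sigma\rvert$ and (ii) determining the image of $\sigma$.

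First I would compute $\ker \sigma$, i.e.\ the number of solutions of $x^2 \equiv 1 \pmod{2^k}$. If $x$ is odd then $\gcd(x-1,\,x+1) = 2$, so from $2^k \mid (x-1)(x+1)$ one of the two factors is $\equiv 2 \pmod 4$ while the other is divisible by $2^{k-1}$; this forces $x \equiv \pm 1 \pmod{2^{k-1}}$, giving exactly the four residues $\{1,\, -1,\, 2^{k-1}-1,\, 2^{k-1}+1\} \pmod{2^k}$, which are distinct precisely because $k \geq 3$. Hence every non-empty fibre of $\sigma$ has exactly $4$ elements, so $N_1(t,2^k) \in \{0, 4\}$ and equals $4$ exactly when $t$ lies in the image of $\sigma$.

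Next I would pin down the image. Every odd square is $\equiv 1 \pmod 8$, so $N_1(t,2^k) = 0$ unless $t \equiv 1 \pmod 8$; this settles the ``otherwise'' case. For the converse I would lift a solution by induction on $j$, for $3 \le j \le k$: given odd $x$ with $x^2 \equiv t \pmod{2^j}$, write $x^2 = t + 2^j c$ and set $x' = x + 2^{j-1} d$; then $x'^2 \equiv t + 2^j(c + xd) \pmod{2^{j+1}}$ since $2j - 2 \geq j + 1$ for $j \geq 3$, and choosing $d \equiv c \pmod 2$ (possible because $x$ is odd) makes $x'^2 \equiv t \pmod{2^{j+1}}$. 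The base case $j = 3$ is immediate from $t \equiv 1 \pmod 8$ with $x = 1$. Running this up to $j = k$ produces the required solution.

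The genuinely routine parts are the binomial expansion in the lifting step and the check that the four square roots of unity are distinct; the one place needing care is the $\gcd(x-1,x+1) = 2$ bound on $\lvert \ker\sigma\rvert$, where the hypothesis $k \geq 3$ is essential. As an alternative to the explicit lifting one could argue by cardinality: the image of $\sigma$ has size $2^{k-1}/4 = 2^{k-3}$, which is exactly the number of units of $\mathbb{Z}_{2^k}$ that are $\equiv 1 \pmod 8$, forcing the image to be precisely that set; I would note this but present the lifting argument, as it is self-contained and uniform in $k$.
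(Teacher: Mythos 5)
Your proof is correct, and it reaches the two halves of the lemma by a partly different route than the paper. For surjectivity of squaring onto the residues $\equiv 1 \pmod 8$ you and the paper do essentially the same thing: a Hensel-type lift of a square root from modulus $2^j$ to $2^{j+1}$ (the paper phrases it as induction on $k$, correcting $y$ by $2^{k-1}$ when the error term is odd; your choice $x' = x + 2^{j-1}d$ with $d \equiv c \pmod 2$ is the same correction). Where you genuinely diverge is the multiplicity count. The paper exhibits the four roots $y$, $2^k - y$, $2^{k-1} \pm y$ explicitly to get a lower bound of $4$, and then closes the gap with a global pigeonhole: $2^{k-1}$ odd residues mapping onto $2^{k-3}$ classes $\equiv 1 \pmod 8$ forces every fibre to have exactly $4$ elements. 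You instead observe that squaring is a homomorphism on $\mathbb{Z}^*_{2^k}$, so every non-empty fibre is a coset of the kernel, and you compute $\lvert\ker\sigma\rvert = 4$ directly from the $\gcd(x-1, x+1) = 2$ argument. Your version gives the exact count locally, without the global cardinality comparison, and makes the role of $k \geq 3$ (distinctness of the four square roots of unity) transparent; the paper's version is more elementary in that it never invokes the group structure, and its counting step is the same observation you relegate to your closing ``alternative'' remark, just run in the opposite direction. Both are complete; no gaps in yours.
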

\begin{proof}
It is clear that any element which is $ \, \not\equiv 1 \pmod 8$ cannot be a square in  $\mathbb{Z}_{2^k}$. We will use induction on $k$ to show that any element of  $\mathbb{Z}_{2^k}$ which is $\equiv 1 \pmod 8$ is a square in  $\mathbb{Z}_{2^k}$. This holds for $k = 3$. Suppose it holds for $k$ and let $t \in \mathbb{Z}_{2^{k+1}}$ with $t \equiv 1 \pmod 8$. By induction there is a $y$ such that $y^2 \equiv t \pmod {2^k}$.  So $t = y^2 + 2^k r$ for some $r$. If $r$ is even then $y^2 \equiv t \pmod {2^{k+1}}$. If $r$ is odd, then
\begin{align*}
(y + 2^{k-1})^2 = & \, \,y^2 + 2^k y + 2^{2k - 2} \\
 = & \, \, t + 2^k(y - r  + 2^{k - 2}).
\end{align*}
Since both $y$ and $r$ are odd, the term in brackets is even and so $(y + 2^{k-1})^2 \equiv t \pmod {2^{k+1}}$. This completes the induction.

Next, we show that, for each $t  \in \mathbb{Z}_{2^k}$ with $t \equiv 1 \pmod 8$, there are at least 4 elements $y \in  \mathbb{Z}_{2^k}$ with $y^2 \equiv t \pmod {2^k}$. From above, we know there is at least one such $y$ value. The elements $2^k - y$, $2^{k-1} \pm y$ are all distinct in $\mathbb{Z}_{2^k}$ and their squares are $\equiv t \pmod {2^k}$. Therefore,
$$
\Big\lvert \Big\{ x \in \mathbb{Z}_{2^k}: x^2 \equiv t \pmod {2^k} \Big\} \Big\rvert  \geq 4.
$$
Since there are $2^{k-1}$ odd elements in $\mathbb{Z}_{2^k}$ and $2^{k-3}$ elements which are $\equiv 1 \pmod 8$, there can be no values of $t$ with 
$$
\Big\lvert \Big\{ x \in \mathbb{Z}_{2^k}: x^2 \equiv t \pmod {2^k} \Big\} \Big\rvert  > 4.
$$
The lemma follows.
\end{proof}

\bigskip

\begin{lemma}
\label{02k}
$N_2 (0, 2^k) = 2^k$ when  $k \geq 1$.
\end{lemma}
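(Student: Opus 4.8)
The plan is to prove the identity by induction on $k$, the engine being the two-step recursion
\[
N_2(0, 2^k) = 4\, N_2(0, 2^{k-2}) \qquad (k \geq 3).
\]
Granting this recursion together with the base values $N_2(0,2)=2$ and $N_2(0,4)=4$ — both obtained at once by listing the squares, which are $0,1$ modulo $2$ and $0,1$ modulo $4$ — the claim $N_2(0,2^k)=2^k$ follows immediately, since $4\cdot 2^{k-2}=2^k$, after checking the two base cases directly.

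To establish the recursion, fix $k\geq 3$ and consider $(x,y)\in\mathbb{Z}_{2^k}^2$ with $x^2+y^2\equiv 0\pmod{2^k}$. First I would show $x$ and $y$ are both even: if $x$ were odd, then $x^2+y^2$ can be even only if $y$ is odd as well, but then $x^2\equiv y^2\equiv 1\pmod 4$, so $x^2+y^2\equiv 2\pmod 4$, contradicting $4\mid x^2+y^2$ (this uses only $k\geq 2$). Writing $x=2x'$, $y=2y'$ with $x',y'$ ranging over $\mathbb{Z}_{2^{k-1}}$, the congruence $4(x'^2+y'^2)\equiv 0\pmod{2^k}$ is equivalent to $x'^2+y'^2\equiv 0\pmod{2^{k-2}}$.

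It then remains to count the pairs $(x',y')\in\mathbb{Z}_{2^{k-1}}^2$ with $x'^2+y'^2\equiv 0\pmod{2^{k-2}}$ in terms of $N_2(0,2^{k-2})$. The key observation is that for $k\geq 3$ the residue $x'^2\bmod 2^{k-2}$ depends only on $x'\bmod 2^{k-2}$ (expand $(a+2^{k-2})^2=a^2+2^{k-1}a+2^{2k-4}$ and note both extra terms vanish modulo $2^{k-2}$), and that the reduction map $\mathbb{Z}_{2^{k-1}}\to\mathbb{Z}_{2^{k-2}}$ is exactly two-to-one. Hence the number of admissible pairs is $2\cdot 2$ times the number of pairs in $\mathbb{Z}_{2^{k-2}}^2$ solving the same congruence, i.e. $4\,N_2(0,2^{k-2})$; invoking the induction hypothesis finishes the argument.

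I expect the only delicate point to be the bookkeeping in this descent: confirming that lowering the modulus from $2^k$ to $2^{k-2}$ after dividing out the common factor $2$ from each coordinate is matched precisely by the factor $4$ produced by the two two-to-one coordinate reductions, and keeping the small cases $k\in\{1,2\}$ handled separately so that $N_2(0,2^{k-2})$ in the recursion is always one of the already-known quantities. Everything else is elementary; in particular this route avoids Lemma~\ref{x2=1mod8}. (An alternative would be to write $N_2(0,2^k)=\sum_{t} N_1(t,2^k)\,N_1(-t,2^k)$ and feed in Lemma~\ref{x2=1mod8}, but that additionally requires $N_1(t,2^k)$ for even $t$ and seems more laborious than the descent above.)
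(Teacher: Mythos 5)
Your proof is correct, but it takes a genuinely different route from the paper. The paper counts the solution set directly: it records the valuation rule that $\ord_2(x^2+y^2)$ equals $\min\{\ord_2(x^2),\ord_2(y^2)\}$ when the two valuations differ and exceeds it by one when they agree, then stratifies the pairs $(x,y)$ by $\ord_2(x)$ and $\ord_2(y)$, splitting into the cases $k$ even and $k$ odd to get $2^{k/2}\cdot 2^{k/2}$ and $2^{(k-1)/2}\cdot 2^{(k-1)/2}+2^{(k-1)/2}\cdot 2^{(k-1)/2}$ respectively. Your descent $N_2(0,2^k)=4\,N_2(0,2^{k-2})$ instead mirrors the reduction the paper itself proves later, in Lemma~\ref{2kred2k-2}, for nonzero $t=2^r s$ with $r\ge 2$ (that lemma does not literally cover $t=0$, since $0$ has no such factorization, but your argument is the exact analogue and all the steps you flag — both coordinates forced even, the two-to-one reduction $\mathbb{Z}_{2^{k-1}}\to\mathbb{Z}_{2^{k-2}}$ respecting squares, the base values $N_2(0,2)=2$ and $N_2(0,4)=4$ — check out). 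What each approach buys: the paper's direct count is self-contained and yields the answer in one pass, at the cost of a parity case split and a slightly fiddly valuation bookkeeping; your induction avoids the case split on the parity of $k$ entirely and unifies the $t=0$ case with the descent machinery used elsewhere in the section, at the cost of needing two base cases and the well-definedness check for squaring modulo $2^{k-2}$. Either is acceptable; yours arguably fits the architecture of Section~\ref{Z_2k} more snugly.
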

\begin{proof}
For any $x, y \in \mathbb{Z}_{2^k}$, 
$$
\ord_2(x^2 + y^2) = 
\begin{cases}
   \min \{ \ord_2(x^2), \ord_2(y^2) \} & \, \, \text{if } \, \, \ord_2(x^2) \ne \ord_2(y^2)  \\
   1 +  \min \{ \ord_2(x^2), \ord_2(y^2) \} & \, \, \text{if } \, \, \ord_2(x^2) = \ord_2(y^2). 
\end{cases}
$$
Suppose $(x, y) \in S_2(0, 2^k)$. 

If $k$ is even, then either $x = 0$ or $\ord_2(x) \geq k/2$. The same applies to $y$. Now,
$$
\{ x: \ord_2(x) \geq k/2 \} = \{ 2^{k/2} j : j = 1, 2, \dots , 2^{k-k/2} - 1 \}.
$$
Counting $x = 0$, there are therefore $2^{k/2}$ choices for $x$ and the same number for $y$. We then have $N_2(0, 2^k) = 2^{k/2} \times 2^{k/2} = 2^k$.

If $k$ is odd, then either $x, y \in \{ 2^{(k+1)/2} j: j = 0, 1, 2, \dots , 2^{k - (k+1)/2} - 1 \}$ or $\ord_2(x) = \ord_2(y) = (k-1)/2$. We have,
$$
\ord_2(x) = (k-1)/2 \iff  x \in  \{ 2^{(k-1)/2} j: j = 1, 3, 5, 2^{(k+1)/2} - 1 \}.
$$
So, when $k$ is odd,
$$
N_2(0, 2^k) = 2^{(k-1)/2} \times 2^{(k-1)/2} + 2^{(k-1)/2} \times 2^{(k-1)/2} = 2^k.
$$
\end{proof}

\bigskip

\begin{lemma}
\label{03k}
When $k \geq 1$, we have 
$$
N_{3}(0, 2^k) = 
\begin{cases}
    2^{3k/2}   & \text{if } \, \, k \text{ is even} \\
    2^{(3k + 1)/2}   & \text{if } \, \, k \text{ is odd}. \\
\end{cases}
$$
\end{lemma}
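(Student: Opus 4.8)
The plan is to condition on the third summand and reduce everything to the two-square count, which is already available: $N_3(0,2^k)=\sum_{z\in\mathbb{Z}_{2^k}}N_2(-z^2,2^k)$. The case $k=1$ is immediate from Theorem~\ref{m:p=2}, so I would assume $k\ge 2$ and analyse $N_2(-z^2,2^k)$ according to $\ord_2(z)$.

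The key step is the following. If $z=0$ or $2\ord_2(z)\ge k$, then $z^2\equiv 0\pmod{2^k}$, so $N_2(-z^2,2^k)=N_2(0,2^k)=2^k$ by Lemma~\ref{02k}. Otherwise write $z=2^v u$ with $u$ odd and $2v<k$; then $-z^2=2^{2v}(-u^2)$ and $-u^2\equiv 7\pmod 8$ because $u^2\equiv 1\pmod 8$. In the notation of Theorem~\ref{m=2:p=2} this means $\alpha=2v$ and $\beta\equiv 3\pmod 4$, so $N_2(-z^2,2^k)=2^k$ when $2v=k-1$ (which forces $k$ odd) and $N_2(-z^2,2^k)=0$ when $2v\le k-2$.

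It then remains to count the surviving $z$. For $k$ even only the $2^{k/2}$ elements with $\ord_2(z)\ge k/2$ contribute, each contributing $2^k$, giving $N_3(0,2^k)=2^{k/2}\cdot 2^k=2^{3k/2}$. For $k$ odd there are $2^{(k-1)/2}$ elements with $\ord_2(z)\ge(k+1)/2$ and a further $2^{(k-1)/2}$ elements with $\ord_2(z)=(k-1)/2$ (the odd multiples of $2^{(k-1)/2}$), each class contributing $2^k$, so $N_3(0,2^k)=2\cdot 2^{(k-1)/2}\cdot 2^k=2^{(3k+1)/2}$.

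I do not expect a real obstacle; the argument is bookkeeping once Theorem~\ref{m=2:p=2} is in hand. The one delicate point is the boundary valuation $2v=k-1$: it occurs only for odd $k$ and is precisely what produces the extra factor of $2$, so it must be separated cleanly from the regime $z^2\equiv 0$ above it and the vanishing regime $2v\le k-2$ below it. If one wished to avoid quoting Theorem~\ref{m=2:p=2}, the needed facts about $N_2(-z^2,2^k)$ can instead be read off from the order formula for $\ord_2(x^2+y^2)$ used in the proof of Lemma~\ref{02k}, noting that $\ord_2(x^2+y^2)$ is either even, in which case the quotient by that power is $\equiv 1\pmod 4$, or odd.
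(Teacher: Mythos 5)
Your proposal is correct and follows essentially the same route as the paper: both condition on the third variable to write $N_{3}(0,2^k)=\sum_{z}N_{2}(-z^2,2^k)$, handle the $z$ with $z^2\equiv 0\pmod{2^k}$ via Lemma~\ref{02k}, and use Theorem~\ref{m=2:p=2} (with $-u^2\equiv 7\pmod 8$, so $\beta\equiv 3\pmod 4$) to see that the remaining $z$ contribute only at the boundary valuation $2v=k-1$, which exists only for odd $k$. The bookkeeping of the surviving $z$ and the final totals match the paper's computation.
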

\begin{proof}
The result when $k < 3$ is clear so assume $k \geq 3$.
$$
\lvert \{ (x, y, z ): x^2 + y^2 + z^2 \equiv 0 \pmod {2^k} \} \rvert = \sum_{z = 0}^{2^k - 1} \lvert \{ (x, y): x^2 + y^2 \equiv -z^2 \pmod {2^k} \} \rvert.
$$
We break this sum into two parts. The first part contains values of $z$ for which $z^2 \equiv 0 \pmod {2^k}$. There are $2^{\lfloor k/2 \rfloor}$ values of $z$ such that $z^2 \equiv 0 \pmod {2^k}$. From lemma \ref{02k} $\lvert \{ (x, y ): x^2 + y^2 \equiv 0 \pmod {2^k} \} \rvert = 2^k$. The first part of the sum is therefore equal to $2^{k + \lfloor k/2 \rfloor}$.

The second part of the sum contains non-zero values of $z^2$. For such $z$ we can write $-z^2 = 2^{2r} s \pmod {2^k}$, where $2r < k$ and $s \equiv 7 \pmod 8$. For this value of $z$, we have from theorem \ref{m=2:p=2}
$$
N_{2}(-z^2, 2^k) = 
\begin{cases}
    2^{k}   & \, \, \text{if} \, \, 2r = k - 1 \\
    0 &  \, \, \text{ otherwise }.
\end{cases}
$$
If $k$ is even, no such $z$ values exist. If k is odd,
\begin{align*}
\lvert \{ & z: -z^2 = 2^{k-1}s \pmod {2^k} \, \, \text{ with } s \equiv 7 \pmod 8 \} \rvert \\
= \, \, & \lvert \{ z: z = 2^{(k-1)/2} u  \, \, \text{ with } \, \, u \, \, \text{ odd and } \, \, 1 \leq u < 2^{(k+1)/2} \} \rvert \\
= \, \, & 2^{(k-1)/2}.
\end{align*}
The second part of the sum is therefore equal to zero when $k$ is even and $2^k \times 2^{(k-1)/2} = 2^{(3k - 1)/2}$ when $k$ is odd. Combining the two parts of the sum produces the result.
\end{proof}

\bigskip

\begin{lemma}
\label{2s2k}
Let $t \in \mathbb{Z}_{2^k}$ with  $k \geq 1$  and $2 \nmid t$. Then 
$$
N_2 (t, 2^k) =
\begin{cases}
2 & \text{if } \, \, k = 1  \\
2^{k+1} &  \text{if } \, \, t \equiv 1 \pmod 4  \, \, \text{ and } \, \, k \geq 2 \\
0 &  \text{if } \, \, t \equiv 3 \pmod 4 \, \, \text{ and } \, \, k \geq 2.
\end{cases}
$$
\end{lemma}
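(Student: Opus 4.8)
The plan is to reduce the count to the single‑square count $N_1$, which for odd argument is already pinned down by Lemma \ref{x2=1mod8}. Since $t$ is odd, any pair $(x,y)$ with $x^2+y^2\equiv t\pmod{2^k}$ has exactly one of $x,y$ odd; the coordinate swap $(x,y)\mapsto(y,x)$ is a bijection between the solutions with $x$ odd, $y$ even and those with $x$ even, $y$ odd, so
$$
N_2(t,2^k)=2A,\qquad A:=\bigl\lvert\{(x,y)\in\mathbb{Z}_{2^k}^2:\ x\text{ odd},\ y\text{ even},\ x^2+y^2\equiv t\pmod{2^k}\}\bigr\rvert .
$$
First I would dispose of the vanishing case: reducing modulo $4$ (legitimate once $k\ge 2$), every square is $\equiv 0$ or $1$, hence $x^2+y^2\not\equiv 3\pmod 4$, giving $N_2(t,2^k)=0$ whenever $t\equiv 3\pmod 4$ and $k\ge 2$.

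Next I would handle $k=1$ and $k=2$ directly, since the rings $\mathbb{Z}_2$ and $\mathbb{Z}_4$ are small and Lemma \ref{x2=1mod8} does not apply there. In $\mathbb{Z}_2$ the only solutions of $x^2+y^2\equiv 1$ are $(0,1)$ and $(1,0)$, so $N_2(1,2)=2$. In $\mathbb{Z}_4$ one has $x^2\in\{0,1\}$ with each value attained by exactly two residues, so $x^2+y^2\equiv 1$ has $2\cdot2+2\cdot2=8=2^{3}$ solutions, matching $2^{k+1}$.

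The substantive case is $k\ge 3$ with $t\equiv 1\pmod 4$. Grouping the solutions counted by $A$ according to the even coordinate $y$, and noting that $t-y^2$ is odd so that every $x$ with $x^2\equiv t-y^2$ is automatically odd, I get $A=\sum_{y\ \mathrm{even}}N_1(t-y^2,2^k)$. By Lemma \ref{x2=1mod8} each term equals $4$ when $t-y^2\equiv 1\pmod 8$ and $0$ otherwise. For even $y$ one has $y^2\equiv 0\pmod 8$ iff $y\equiv 0\pmod 4$ and $y^2\equiv 4\pmod 8$ iff $y\equiv 2\pmod 4$; hence the condition $t-y^2\equiv 1\pmod 8$ singles out exactly the class $y\equiv 0\pmod 4$ when $t\equiv 1\pmod 8$ and exactly $y\equiv 2\pmod 4$ when $t\equiv 5\pmod 8$, and in either case there are $2^{k-2}$ admissible $y\in\mathbb{Z}_{2^k}$. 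Therefore $A=4\cdot 2^{k-2}=2^{k}$ and $N_2(t,2^k)=2^{k+1}$.

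I do not expect a genuine obstacle: the lifting work is already packaged in Lemma \ref{x2=1mod8}, and what remains is parity‑ and mod‑$8$ bookkeeping plus the two small base cases $k=1,2$. The only point needing care is to apply the reduction ``exactly one of $x,y$ odd'' and the resulting factor $2$ \emph{before} invoking $N_1$, so that the $N_1$‑values being summed really do count odd $x$, and to verify the four residue classes $t\equiv 1,3,5,7\pmod 8$ are exhausted by the two sub‑cases $t\equiv 1\pmod 4$ (done via Lemma \ref{x2=1mod8}) and $t\equiv 3\pmod 4$ (done via the mod‑$4$ argument).
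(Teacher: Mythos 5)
Your proof is correct and follows essentially the same route as the paper's: both dispose of $t\equiv 3\pmod 4$ by a residue obstruction, check $k=1,2$ by hand, and for $k\ge 3$ fix the even coordinate $y$ (noting $y\equiv 0$ or $2\pmod 4$ according to $t\bmod 8$, giving $2^{k-2}$ choices) and invoke Lemma \ref{x2=1mod8} to contribute a factor of $4$, with the remaining factor of $2$ coming from swapping the roles of $x$ and $y$. Your version is slightly more explicit about that factor of $2$ and about why the $x$'s counted by $N_1$ are automatically odd, but the substance is identical.
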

\begin{proof}
We assume $k \geq 3$ since it is easy to see that $N_2(1, 2) = 2$, $N_2(1, 4) = 8$ and $N_2(3, 4) = 0$. If $t \in \{3, 7 \} \mod 8$, then there are no $x, y$ such that $x^2 + y^2 \equiv t \pmod 8$ and \textit{a fortiori} $N_2(t, 2^k) = 0$.

Suppose $t \equiv 1 \pmod 8$. If $(x, y) \in S_2(t, 2^k)$, then either $x^2 \equiv 1 \pmod 8$ and $y^2 \equiv 0 \pmod 8$ or  \textit{vice versa}. Now, 
$$
\lvert \{ y \in \mathbb{Z}_{2^k}: y^2 \equiv 0 \pmod 8 \} \rvert = \lvert \{ y \in \mathbb{Z}_{2^k}: y \equiv 0 \pmod 4 \} \rvert = 2^{k-2}.
$$
For each such $y$, $t - y^2 \equiv 1 \pmod {8}$.  By lemma \ref{x2=1mod8} there are $4$ values of $x \in  \mathbb{Z}_{2^k}$ satisfying the congruence $x^2 \equiv t - y^2 \pmod {2^k}$. Hence, when $t \equiv 1 \pmod 8$,
$$
N_2 (t, 2^k) = 4 \times 2^{k-2} + 2^{k-2} \times 4 = 2^{k+1}.
$$
Suppose finally that $t \equiv 5 \pmod 8$. If $(x, y) \in S_2(t, 2^k)$, then either $x^2 \equiv 1 \pmod 8$ and $y^2 \equiv 4 \pmod 8$ or  \textit{vice versa}. The same argument as above can be used again.\end{proof}

\bigskip

\begin{lemma}
\label{22s2k}
Let $t \in \mathbb{Z}_{2^k}$ with  $k \geq 2$  and $2 \nmid t$. Then 
$$
N_2 (2t, 2^k) =
\begin{cases}
4 & \, \, \text{if } \, \, k = 2  \\
2^{k+1} & \, \,  \text{if } \, \, t \equiv 1 \pmod 4  \, \,  \text{and }  k \geq 3\\
0 &  \, \, \text{if }  \, \, t \equiv 3 \pmod 4   \, \, \text{and }  k \geq 3.
\end{cases}
$$
\end{lemma}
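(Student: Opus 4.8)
The plan is to count $S_2(2t,2^k)$ directly, in the same spirit as the proof of Lemma~\ref{2s2k}, exploiting that $2t\equiv 2\pmod 4$ because $t$ is odd. First I would show that every $(x,y)\in S_2(2t,2^k)$ consists of two odd numbers: reducing $x^2+y^2\equiv 2t\pmod{2^k}$ modulo $4$ (legitimate since $k\geq 2$) gives $x^2+y^2\equiv 2\pmod 4$, and as a square is $\equiv 0$ or $1\pmod 4$, this forces $x\equiv y\equiv 1\pmod 2$. For $k=2$ this finishes the count immediately: the odd elements of $\mathbb{Z}_4$ are $\{1,3\}$, and every ordered pair of them satisfies $x^2+y^2\equiv 2\equiv 2t\pmod 4$, so $N_2(2t,4)=4$.

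Now assume $k\geq 3$. Since $x,y$ are odd we have $x^2\equiv y^2\equiv 1\pmod 8$, hence $x^2+y^2\equiv 2\pmod 8$. If $t\equiv 3\pmod 4$ then $2t\equiv 6\pmod 8$, so the congruence has no solution and $N_2(2t,2^k)=0$. If $t\equiv 1\pmod 4$ then $2t\equiv 2\pmod 8$; here I would fix any odd $y\in\mathbb{Z}_{2^k}$ (there are $2^{k-1}$ of these) and count solutions of $x^2\equiv 2t-y^2\pmod{2^k}$. The right-hand side is odd and $\equiv 2-1=1\pmod 8$, so Lemma~\ref{x2=1mod8} yields exactly $4$ admissible values of $x$ (necessarily odd, consistent with the parity constraint). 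Summing over $y$ gives $N_2(2t,2^k)=4\cdot 2^{k-1}=2^{k+1}$.

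I do not expect a real obstacle here; the only points needing care are that Lemma~\ref{x2=1mod8} requires both $k\geq 3$ and an odd argument (both hold), that the mod-$4$ reduction requires $k\geq 2$, and the explicit check of $2t-y^2\pmod 8$ in the last case. One could alternatively pass through the substitution $x=a+b,\ y=a-b$, which turns $x^2+y^2\equiv 2t\pmod{2^k}$ into $a^2+b^2\equiv t\pmod{2^{k-1}}$ and so relate the count to Lemma~\ref{2s2k} / Theorem~\ref{m=2:p=2}, but keeping track of the non-injectivity of that substitution on $\mathbb{Z}_{2^k}$ is more trouble than the direct count above.
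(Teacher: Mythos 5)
Your proposal is correct and follows essentially the same route as the paper: both arguments observe that $t\equiv 3\pmod 4$ forces $2t\equiv 6\pmod 8$ (hence no solutions), and in the case $t\equiv 1\pmod 4$ both fix one of the $2^{k-1}$ odd values of $y$, note $2t-y^2\equiv 1\pmod 8$, and apply Lemma~\ref{x2=1mod8} to get $4\cdot 2^{k-1}=2^{k+1}$. Your write-up is in fact a little tidier, since you spell out the $k=2$ case and the parity reduction mod $4$ that the paper leaves implicit.
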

\begin{proof}
We assume $k \geq 3$ since the result for $k = 2$ is straightforward. If $t \equiv 3 \pmod 4$ then $2t \equiv 6 \pmod 8$ and $N_2 (2t, 2^k) = 0$.

Suppose $t \equiv 1 \pmod 4$. Then $2t \equiv 2 \pmod 8$.  If $(x, y) \in S_2(t, 2^k)$, then both $x^2$ and $y^2$ must be $\equiv 1 \pmod 8$. Now, 
$$
\lvert \{ y \in \mathbb{Z}_{2^k}: y^2 \equiv 1 \pmod 8 \} \rvert = \lvert \{ y \in \mathbb{Z}_{2^k}: y \equiv 1 \pmod 2 \} \rvert = 2^{k-1}.
$$
For each such $y$, $t - y^2 \equiv 1 \pmod {8}$.  By lemma \ref{x2=1mod8} there are $4$ values of $x \in  \mathbb{Z}_{2^k}$ satisfying the congruence $x^2 \equiv t - y^2 \pmod {2^k}$. Hence, when $t \equiv 1 \pmod 4$,
$$
N_2 (2t, 2^k) = 4 \times 2^{k-1} = 2^{k+1}.
$$
\end{proof}

\bigskip

\begin{lemma}
\label{3s2k}
Let $t \in \mathbb{Z}_{2^k}$ with  $k \geq 2$  and $2 \nmid t$. Then 
$$
N_3 (t, 2^k) =
\begin{cases}
3 \times 2^{2k-1} &  \text{if } \, \, t \equiv 1 \pmod 4  \\
8 &  \text{if } \, \, t \equiv 3 \pmod 4 \, \, \text{ and } \, \, k = 2 \\
2^{2k} &  \text{if } \, \, t \equiv 3 \pmod 8 \, \, \text{ and } \, \, k \geq 3  \\
0 &  \text{if } \, \, t \equiv 7 \pmod 8.
\end{cases}
$$
\end{lemma}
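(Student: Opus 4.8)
The plan is to start from the convolution identity
$$
N_{3}(t,2^k)=\sum_{z\in\mathbb{Z}_{2^k}}N_{2}(t-z^2,2^k)
$$
and split the range of $z$ into the even and the odd elements. For even $z$ we have $4\mid z^2$, so $t-z^2$ is odd and $t-z^2\equiv t\pmod 4$; by Lemma~\ref{2s2k} (equivalently Theorem~\ref{m=2:p=2}) each of the $2^{k-1}$ even $z$ then contributes $2^{k+1}$ when $t\equiv1\pmod4$ and $0$ when $t\equiv3\pmod4$. Hence the even part of the sum equals $2^{2k}$ in the first case and $0$ in the second; in particular this already settles the whole answer when $t\equiv3\pmod4$, provided the odd part is shown to vanish there.

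For the odd $z$ I would first dispose of $k=2$ by hand: the two odd elements of $\mathbb{Z}_4$ both square to $1$, and $N_{2}(0,4)=N_{2}(2,4)=4$ by Lemmas~\ref{02k} and~\ref{22s2k}, so the odd part equals $8$ for both $t=1$ and $t=3$, giving $N_{3}(1,4)=16+8=24=3\cdot2^{3}$ and $N_{3}(3,4)=0+8=8$. For $k\ge3$, Lemma~\ref{x2=1mod8} says that $z\mapsto z^{2}$ is a $4$-to-$1$ surjection from the odd elements of $\mathbb{Z}_{2^k}$ onto $\{v:v\equiv1\pmod8\}$, so
$$
\sum_{\substack{z\in\mathbb{Z}_{2^k}\\ z\ \mathrm{odd}}}N_{2}(t-z^2,2^k)=4\sum_{\substack{v\in\mathbb{Z}_{2^k}\\ v\equiv1\,(8)}}N_{2}(t-v,2^k)=4\sum_{\substack{w\in\mathbb{Z}_{2^k}\\ w\equiv t-1\,(8)}}N_{2}(w,2^k).
$$
The key point is that $\sum_{w\equiv c\,(8)}N_{2}(w,2^k)$ is exactly the number of pairs $(x,y)\in\mathbb{Z}_{2^k}^{2}$ with $x^2+y^2\equiv c\pmod 8$, since reducing the value $x^2+y^2\bmod 2^k$ modulo $8$ just returns $x^2+y^2\bmod 8$. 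Call this count $A_c$.

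Computing the $A_c$ is elementary: $x^2\bmod 8$ is $0$, $1$, or $4$ according as $x\equiv0\pmod4$ ($2^{k-2}$ values), $x$ is odd ($2^{k-1}$ values), or $x\equiv2\pmod4$ ($2^{k-2}$ values), and running through the nine combinations of $x^2\bmod 8$ with $y^2\bmod 8$ gives $A_{0}=A_{4}=2^{2k-3}$, $A_{1}=A_{2}=A_{5}=2^{2k-2}$, and $A_{3}=A_{6}=A_{7}=0$ (with $\sum_c A_c=2^{2k}$ as a check). Substituting $c=t-1\bmod 8$: for $t\equiv1$ or $5\pmod8$ the odd part is $4\cdot2^{2k-3}=2^{2k-1}$, so $N_{3}(t,2^k)=2^{2k}+2^{2k-1}=3\cdot2^{2k-1}$; for $t\equiv3\pmod8$ it is $4\cdot2^{2k-2}=2^{2k}$, so $N_{3}(t,2^k)=0+2^{2k}=2^{2k}$; and for $t\equiv7\pmod8$ it is $0$, so $N_{3}(t,2^k)=0$. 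This reproduces all four cases.

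I do not expect a serious obstacle. The only things requiring care are that Lemma~\ref{x2=1mod8} is only available for $k\ge3$, which forces the (trivial) separate check at $k=2$, and getting the small table of $A_c$ right along with the observation that $\sum_{w\equiv c\,(8)}N_{2}(w,2^k)$ is a mod-$8$ pair count. One could instead evaluate these sums directly from Theorem~\ref{m=2:p=2}, but that route carries extra edge cases (notably the $\alpha=k-1$ term), so the pair-count reformulation is cleaner.
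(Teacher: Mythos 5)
Your argument is correct, and it shares the paper's basic skeleton --- write $N_3(t,2^k)$ as $\sum_z N_2(t-z^2,2^k)$ and split according to the parity of $z$ --- but the two halves are handled differently. The paper classifies solutions by the parity pattern of the whole triple: when $t\equiv 1\pmod 4$ it computes only the even-$z$ contribution via Lemma~\ref{2s2k} and recovers the total by the symmetry factor $\tfrac{3}{2}$ (the even coordinate sits in $2$ of the $3$ positions), and when $t\equiv 3\pmod 8$ it evaluates the odd-$z$ sum directly from Lemma~\ref{22s2k}, since $t-z^2\equiv 2\pmod 8$ with $(t-z^2)/2\equiv 1\pmod 4$. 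You instead compute both the even and odd parts head-on: for odd $z$ you exploit the $4$-to-$1$ surjectivity of squaring from Lemma~\ref{x2=1mod8} to convert the sum into $4\sum_{w\equiv t-1\,(8)}N_2(w,2^k)$, and then observe that this is a pure mod-$8$ pair count $A_c$ computable from the distribution of $x^2\bmod 8$. Your table of $A_c$ is right, the totals match in all four cases, and your explicit $k=2$ verification plugs a step the paper dismisses as ``clear.'' What your route buys is uniformity (one computation covers all residues of $t$ mod $8$ simultaneously, with no symmetry factor to justify) at the cost of a small auxiliary count; the paper's route is shorter where it applies but leans on the $\tfrac{3}{2}$ argument, which itself needs the observation that every solution triple for $t\equiv 1\pmod 4$ has exactly two even coordinates. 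Both are sound.
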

\begin{proof}
The result is clear when $k = 2$ and when $t \equiv 7 \pmod 8$. Assume $k \geq 3$. If $x^2 + y^2 + z^2 \equiv t \pmod {2^k}$, then either all of $\{ x, y, z \}$ are odd, or two of $\{ x, y, z \}$ are even and the other odd. We will treat each possibility separately. We first count the number of suitable $\{x, y, z \}$ such that all are odd. In this case, we must have $t \equiv 3 \pmod 8$. The number of such solutions is:
$$
N_3 (t, 2^k)  =   \sum_{z \, \, \text{ odd }} \lvert \{ (x, y) \in \mathbb{Z}_{2^k}: x, y \, \, \text{ odd } \, \, \text{ and } \, \,  x^2 + y^2 \equiv t - z^2 \pmod {2^k} \} \rvert  
$$
Since  $t - z^2 \equiv 2 \pmod 8$ for odd $z$, we can write $z = 2u +1$ and use lemma \ref{22s2k} to show that:
$$
N_3 (t, 2^k)  = 2^{k + 1} \times \lvert \{ u:0 \leq u < 2^{k-1} \} \rvert = 2^{2k}.
$$

Secondly, we count the number of solutions in which exactly two of $\{ x, y, z \}$ are even. In this case, $t \equiv 1 \pmod 4$. The element $z$ is even in two out of the three possible arrangements of odd/even elements. Writing $z = 2u$, and using lemma \ref{2s2k}, the number of such solutions when $t \equiv 1 \pmod 4$ is
\begin{align*}
N_3 (t, 2^k)  &=   \frac{3}{2} \times \sum_{u = 0}^{2^{k-1}-1} \lvert \{ (x, y) \in \mathbb{Z}_{2^k}:  \, \,  x^2 + y^2 \equiv t - 4 u^2 \pmod {2^k} \} \rvert  \\
  &= \frac{3}{2} \times 2^{k+1} \times 2^{k-1} \\
  &= 3 \times 2^{2k-1}.
\end{align*}
\end{proof}

\bigskip

\begin{lemma}
\label{32s2k}
Let $t \in \mathbb{Z}_{2^k}$ with  $k \geq 2$  and $2 \nmid t$. Then $N_3 (2t, 2^k) = 3 \times 2^{2k - 1}$.
\end{lemma}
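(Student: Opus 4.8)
The plan is to mimic the structure of the proof of Lemma \ref{3s2k}: reduce to a parity analysis modulo $8$, then peel off one variable and invoke Lemma \ref{22s2k}. First I would dispose of the case $k = 2$ by a direct count: there $2t \equiv 2 \pmod 4$, and a sum $x^2+y^2+z^2$ is $\equiv 2 \pmod 4$ exactly when precisely two of $x,y,z$ are odd, giving $\binom{3}{2}\cdot 2\cdot 2\cdot 2 = 24 = 3\cdot 2^{3}$ triples in $\mathbb{Z}_4$. So from now on assume $k \ge 3$.

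Since $t$ is odd, $2t \equiv 2$ or $6 \pmod 8$. Using that the squares modulo $8$ lie in $\{0,1,4\}$, I would run through the four parity patterns for $(x,y,z)$ and observe that only ``exactly two odd, one even'' can produce a value $\equiv 2 \pmod 4$: three odd gives $3 \pmod 8$, two even and one odd gives $1$ or $5 \pmod 8$, and three even gives $0 \pmod 4$. Hence every solution has exactly two odd coordinates and one even coordinate, and by the obvious symmetry among the three positions, $N_3(2t,2^k) = 3M$, where $M$ counts triples with $x,y$ odd and $z$ even.

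Next I would write $z = 2u$ with $u$ ranging over a complete residue system modulo $2^{k-1}$, so that $M = \sum_u \lvert\{(x,y): x,y\text{ odd},\ x^2+y^2 \equiv 2t - 4u^2 \pmod{2^k}\}\rvert$. Here $2t - 4u^2 = 2(t-2u^2)$ with $t - 2u^2$ odd, so Lemma \ref{22s2k} applies: the inner count is $2^{k+1}$ when $t - 2u^2 \equiv 1 \pmod 4$ and $0$ when $t-2u^2 \equiv 3 \pmod 4$; moreover, in the nonzero case $2(t-2u^2) \equiv 2 \pmod 8$ forces both coordinates of any solution to be odd, so the side restriction $x,y$ odd costs nothing. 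Since $2u^2 \bmod 4$ depends only on the parity of $u$, exactly one of the two parity classes of $u$ yields $t - 2u^2 \equiv 1 \pmod 4$, and that class contains $2^{k-2}$ of the $2^{k-1}$ values of $u$. Therefore $M = 2^{k-2}\cdot 2^{k+1} = 2^{2k-1}$ and $N_3(2t,2^k) = 3\cdot 2^{2k-1}$.

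I expect no serious obstacle; the only points requiring care are the modulo-$8$ parity bookkeeping and the observation that the ``both coordinates odd'' condition in the inner sum is automatically satisfied whenever the count is nonzero, so that Lemma \ref{22s2k} can be applied verbatim rather than needing a refined variant. One should also verify the boundary value $k = 2$ separately, since the mod-$8$ argument only starts at $k \ge 3$.
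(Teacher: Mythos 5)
Your proposal is correct and follows essentially the same route as the paper: both reduce to the observation that exactly two of $x,y,z$ are odd, pull out the factor of $3$, substitute $z=2u$, and apply Lemma \ref{22s2k} to the inner count $x^2+y^2\equiv 2(t-2u^2)\pmod{2^k}$, noting that exactly one parity class of $u$ (with $2^{k-2}$ members) contributes $2^{k+1}$ each. Your extra remarks --- the direct check at $k=2$ and the observation that ``both coordinates odd'' is automatic when the sum is $\equiv 2\pmod 4$ --- are minor refinements of details the paper leaves implicit.
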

\begin{proof}
If $x^2 + y^2 + z^2 \equiv 2t \pmod {2^k}$, then two of $\{ x, y, z \}$ must be odd and the other even. So,
\begin{align*}
N_3 (2t, 2^k) \, \, =& \, \,  \lvert \{ (x, y, z) \in \mathbb{Z}_{2^k}: x^2 + y^2 + z^2 \equiv 2t \pmod {2^k} \} \rvert \\
  =& \, \, 3 \times \sum_{u = 0}^{2^{k-1} - 1} \lvert \{ (x, y) \in \mathbb{Z}_{2^k}: x^2 + y^2 \equiv 2(t - 2u^2) \pmod {2^k} \} \rvert .
\end{align*}
When $u$ is even, $t - 2u^2 \equiv t \pmod 4$ and when $u$ is odd, $t - 2u^2 \equiv t -2 \pmod 4$. By lemma \ref{22s2k}, we have,
\begin{align*}
N_3 (2t, 2^k) & =   3 \times \sum_{u \, \, \text{even}} \lvert \{ (x, y) \in \mathbb{Z}_{2^k}: x^2 + y^2 \equiv 2t \pmod {2^k} \} \rvert  \\
                 & + \, \, 3 \times \sum_{u \, \, \text{odd}} \lvert \{ (x, y) \in \mathbb{Z}_{2^k}: x^2 + y^2 \equiv 2(t -2) \pmod {2^k} \} \rvert \\
                 & = 3 \times  2^{k-2} \times  2^{k+1} \\
                 & = 3 \times 2^{2k - 1}.
\end{align*}
\end{proof}

\bigskip

\begin{lemma}
\label{2kred2k-2}
Let $t \in \mathbb{Z}_{2^k}$ with  $k \geq 3$ and $t = 2^r s$ with $2 \nmid s$. If $r \geq 2$, then $N_2 (t, 2^k) = 4  \times  N_2 (2^{r-2} s, 2^{k-2})$ and  $N_3 (t, 2^k) = 8  \times  N_3 (2^{r-2} s, 2^{k-2})$.
\end{lemma}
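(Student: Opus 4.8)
The plan is to reduce the computation modulo $2^{k-2}$ by showing that, for $r \geq 2$, every tuple counted by $N_m(t,2^k)$ has all coordinates even, and that halving the coordinates sets up a $2^m$-to-$1$ correspondence with the tuples counted by $N_m(2^{r-2}s,2^{k-2})$; specialising to $m=2,3$ then gives the two asserted identities.

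First I would use the fact that an integer square is $\equiv 0$ or $1 \pmod 4$. If $(x_1,\dots,x_m)\in S_m(t,2^k)$ then, since $r\geq 2$, we have $\sum_{i} x_i^2\equiv 0\pmod 4$; as each $x_i^2$ contributes $0$ or $1$ modulo $4$ and there are only $m\leq 3$ of them, the number of odd $x_i$ must be $0$, so every $x_i$ is even. Writing $x_i=2y_i$, the congruence $\sum_i x_i^2\equiv t\pmod{2^k}$ becomes $4\sum_i y_i^2\equiv 2^r s\pmod{2^k}$, equivalently $\sum_i y_i^2\equiv 2^{r-2}s\pmod{2^{k-2}}$ (note $r-2\geq 0$ and $k-2\geq 1$).

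Next I would count the fibres of this reduction. Every even element of $\mathbb{Z}_{2^k}$ equals $2y$ for a unique $y\in\mathbb{Z}_{2^{k-1}}$, and the residue $y^2\bmod 2^{k-2}$ depends only on $y\bmod 2^{k-2}$, since $y\equiv y'\pmod{2^{k-2}}$ implies $2^{k-2}\mid (y-y')(y+y')=y^2-y'^2$. Hence the natural map $\mathbb{Z}_{2^{k-1}}\to\mathbb{Z}_{2^{k-2}}$ is exactly $2$-to-$1$, and each tuple in $S_m(2^{r-2}s,2^{k-2})$ lifts to precisely $2^m$ tuples of even elements of $\mathbb{Z}_{2^k}$, all lying in $S_m(t,2^k)$. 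Taking $m=2$ yields $N_2(t,2^k)=4\,N_2(2^{r-2}s,2^{k-2})$ and $m=3$ yields $N_3(t,2^k)=8\,N_3(2^{r-2}s,2^{k-2})$.

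The argument is mostly bookkeeping, and the only delicate point is the parity step: checking that $\sum_i x_i^2\equiv 0\pmod 4$ with $m\leq 3$ forces all $x_i$ even. This is exactly where both hypotheses $m\leq 3$ (the same conclusion fails for $m=4$, where $(1,1,1,1)$ is an odd-coordinate solution of $\sum_i x_i^2\equiv 0\pmod 4$) and $r\geq 2$ are used; everything else is the standard $2$-to-$1$ behaviour of the reduction $\mathbb{Z}_{2^{k-1}}\to\mathbb{Z}_{2^{k-2}}$ on squares.
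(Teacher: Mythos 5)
Your proof is correct and follows essentially the same route as the paper: show every solution tuple has all coordinates even, then halve coordinates to get a $2^m$-to-$1$ reduction onto $S_m(2^{r-2}s,2^{k-2})$. You actually justify the evenness step (via squares being $0$ or $1 \bmod 4$ and $m\leq 3$) that the paper merely asserts, and your fibre count via the $2$-to-$1$ map $\mathbb{Z}_{2^{k-1}}\to\mathbb{Z}_{2^{k-2}}$ is a clean equivalent of the paper's explicit listing of the four (resp.\ eight) preimages.
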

\begin{proof}
We introduce the map $f:   N_2 (2^{r} s, 2^{k}) \to N_2 (2^{r-2} s, 2^{k-2})$ given by $f((x, y)) = (x/2 \pmod {2^{r-2}}, y/2 \pmod {2^{r-2}})$. The map is well defined because, if $(x, y) \in S_2(2^{r} s, 2^{k})$ for $k \geq3$ and $r \geq 2$, then both $x$ and $y$ must be even. The map is onto since, if $(x, y) \in S_2(2^{r-2} s, 2^{k-2})$, then  $(2x, 2y) \in S_2(2^{r} s, 2^{k})$ and $f((2x, 2y)) = (x, y)$. The map is also $4$ to $1$. If $(x, y) \in  S_2(2^{r} s, 2^{k})$, then so are $(x + 2^{r-1}, y),  (x, y + 2^{r-1})$ and  $(x + 2^{r-1}, y + 2^{r-1})$ and all four elements are mapped to the same point by $f$. If $f((x, y)) = f((u, v))$ then $u/2 \equiv x/2 \pmod {2^{r-2}}$ so $u \equiv x \pmod {2^{r-1}}$. Therefore, $u \pmod {2^{r}} \in \{x, x + 2^{r-1} \}$. Similarly, $v \pmod {2^{r}} \in \{y, y + 2^{r-1} \}$. Therefore, $(u, v)$ must be one of the four elements $(x, y), (x + 2^{r-1}, y),  (x, y + 2^{r-1})$ or  $(x + 2^{r-1}, y + 2{r-1})$. The lemma follows.

With suitable changes, the same argument works for $N_3 (t, 2^k)$.
\end{proof}

\bigskip

\subsection{ $N^*_m(t, 2^k)$ : Proof of theorem \ref{Nstar:m:p=2}}

We note that theorem \ref{Nstar:m:p=2} appears as Theorem 6.2 in \cite{Mollahajiaghaei_2017} and Theorem 4.4 in \cite{LI201841}.

The proof when $k \in \{1, 2 \}$ is obvious so assume $k \geq 3$. When $x$ is odd, $x^2 \equiv 1 \pmod 8$. Therefore, there are no solutions when $t \not\equiv m \pmod 8$. 

The proof is by induction on $m$. The theorem holds for $m = 1$ by Lemma \ref{x2=1mod8}. Suppose the theorem holds for $m - 1$. Then, if $t \equiv m \pmod 8$,  
\begin{align*}
  N^*_m(t, 2^k) =& \, \, \Big\lvert \Big\{  (x_1, x_2, ... , x_m)  \in \mathbb{Z}^*_{2^k}: \sum_{i=1}^{m} x_i ^2 \equiv t \pmod {2^k} \Big\} \Big\rvert \\
     =& \, \, \sum_s N^*_{m-1} (s, 2^k) \times N^*_1 (t-s, 2^k) \\
     =& \, \, 4 \times \sum_{s \equiv m - 1 \pmod 8} N^*_{m-1} (s, 2^k) \, \, \text{ by Lemma \ref{x2=1mod8}}  \\
     =& \, \, 4 \times 2^{k-3} \times 2^{2(m-1) + (m-2)(k-3)}  \, \, \text{ by the inductive step} \\
     =& \, \, 2^{2m + (m-1)(k-3)}.
\end{align*}

This finishes the proof.

\bigskip

\subsection{ $N_m(t, 2)$ and $N_m(t, 4)$ : Proof of theorem \ref{m:p=2}}

Firstly, if $t, x_i \in \mathbb{Z}_{2}$ for $i = 1, 2, \dots , m$, then 
\begin{align*}
 \sum_{i=1}^{m} x_i ^2 \equiv t \pmod 2 \iff & \sum_{i=1}^{m} x_i \equiv t \pmod 2 \\
     \iff &  \lvert \{  x_i : x_i = 1  \} \rvert   \equiv t \pmod 2.
\end{align*}
  So,
  $$
  N_m(t, 2) = \sum_j \binom{m}{t + 2j}.
  $$
The result for $N_m(t, 2)$ follows from the following identities, which are special cases of \eqref{eqn:Ramus}:
$$
\sum_j \binom{m}{2j} = \sum_j \binom{m} {1 + 2j} = 2^{m-1}.
$$

\bigskip
Similarly, if $t, x_i \in \mathbb{Z}_{4}$ for $i = 1, 2, \dots , m$, then 
$$
 \sum_{i=1}^{m} x_i ^2 \equiv t \pmod 4 \iff \lvert \{  x_i : x_i  \equiv 1 \pmod 2 \} \rvert   \equiv t \pmod 4
$$
 So,
$$
 N_m(t, 4) \, \, = \, \, \sum_j \binom{m}{t + 4j} 2^{t + 4j}  \,\, 2^{m - t - 4j} \, \, = \, \, 2^m \sum_j \binom{m}{t + 4j}.
$$

The result for  $N_m(t, 4)$ follows from another application of \eqref{eqn:Ramus}.

\bigskip

\subsection{ $N_2(t, 2^k)$ : Proof of theorem \ref{m=2:p=2}}

The result is clear when $k = 2$ so assume $k \geq 3$.

The $t = 0$ case is covered by lemma \ref{02k}. For non-zero $t$, write $t = 2^{\alpha}\beta$ where $2 \nmid \beta$. We use proof by descent.  By lemma \ref{2kred2k-2},  $N_2 (t, 2^k) = 4  \times  N_2 (2^{\alpha-2} \beta, 2^{k-2})$ when $\alpha \geq 2$. Continuing the reduction, we get
$$
N_2 (t, 2^k) =
\begin{cases}
   4^{\alpha/2} \times N_2(\beta, 2^{k - \alpha}) & \text{if } \, \, \alpha  \equiv 0 \pmod 2  \\
   4^{(\alpha - 1)/2} \times N_2(2 \beta, 2^{k - \alpha + 1}) & \text{if } \, \, \alpha  \equiv 1 \pmod 2. 
\end{cases}
$$

We can then use lemmas \ref{2s2k} and \ref{22s2k} to derive the expressions for $N_2 (t, 2^k)$ given in the theorem.

\bigskip

\subsection{ $N_3(t, 2^k)$ : Proof of theorem \ref{m=3:p=2}}

The proof of theorem \ref{m=3:p=2} follows the same pattern as the proof of theorem  \ref{m=2:p=2}. The result for $t = 0$ is contained in lemma \ref{03k}. For non-zero $t$, write $t = 2^{\alpha}\beta$ where $2 \nmid \beta$. We again use proof by descent.  By lemma \ref{2kred2k-2},  $N_3 (t, 2^k) = 8  \times  N_3 (2^{\alpha-2} \beta, 2^{k-2})$ when $\alpha \geq 2$. Continuing the reduction, we get
$$
N_3 (t, 2^k) =
\begin{cases}
   8^{\alpha/2} \times N_3(\beta, 2^{k - \alpha}) & \text{if } \, \, \alpha  \equiv 0 \pmod 2  \\
   8^{(\alpha - 1)/2} \times N_3(2 \beta, 2^{k - \alpha + 1}) & \text{if } \, \, \alpha  \equiv 1 \pmod 2. 
\end{cases}
$$

We can then use lemmas \ref{3s2k} and \ref{32s2k} to derive the expressions for $N_3 (t, 2^k)$ given in the theorem.

\bigskip

\section{The $\mathbb{Z}_p$ case: proof of theorems \ref{k=1:m odd:p=1mod4} - \ref{k=1:m even:p=3mod4}}
\label{Z_p} 

In this section we will prove the four theorems \ref{k=1:m odd:p=1mod4}, \ref{k=1:m even:p=1mod4}, \ref{k=1:m odd:p=3mod4} and \ref{k=1:m even:p=3mod4} related to $\mathbb{Z}_p$. We first prove the theorems for $N_1$ and $N_2$ and then establish a recurrence relation which can be used to derive the general formulae. Using the definition of $\gamma$ given in (\ref{gamma}), it is easy to see that
\begin{equation}
\label{gamma1}
\gamma_{1} = \begin{pmatrix} 1 \\ 0 \\ 2 \end{pmatrix}
\end{equation}
which establishes theorems \ref{k=1:m odd:p=1mod4} and \ref{k=1:m odd:p=3mod4} for $N_1$.

\bigskip

We include the proof of the case $m=2$ here even though it is available elsewhere. It requires a few lemmata. 

\bigskip

\begin{lemma}
\label{y2=x2+t}
For each non-zero $t \in \mathbb{Z}_p$ the congruence $y^2 = x^2 + t \pmod p$ has $p-1$ solutions $(x,y)$.
\end{lemma}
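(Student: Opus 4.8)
The plan is to count solutions of $y^2 - x^2 = t \pmod p$ by factoring the left-hand side. Since $p$ is an odd prime, $2$ is invertible, so I introduce the change of variables $u = y + x$ and $v = y - x$. This is a bijection on $\mathbb{Z}_p \times \mathbb{Z}_p$ (with inverse $x = (u-v)/2$, $y = (u+v)/2$), and under it the equation becomes $uv = t$. So it suffices to count pairs $(u,v) \in \mathbb{Z}_p^2$ with $uv = t$.

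For $t \neq 0$, this count is immediate: neither $u$ nor $v$ can be zero, so $u$ ranges over the $p-1$ nonzero elements of $\mathbb{Z}_p$ and then $v = t u^{-1}$ is uniquely determined. Hence there are exactly $p-1$ solutions $(u,v)$, and by the bijection exactly $p-1$ solutions $(x,y)$ to the original congruence. This is the whole argument; there is no real obstacle, since the substitution trivializes the problem.

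The only point requiring a word of care is the invertibility of $2$, i.e.\ that $p \neq 2$ — this is where the hypothesis that $p$ is odd is used (and the lemma is indeed stated for odd primes $p$, consistent with the rest of Section~\ref{Z_p}). One could alternatively avoid the substitution and argue directly: write $y^2 = x^2 + t$ and sum over $x$ the number of square roots of $x^2 + t$, using that a nonzero element has $1 + \Legendre{\cdot}{p}$ square roots and that $0$ has one; the sum $\sum_{x} \bigl(1 + \Legendre{x^2+t}{p}\bigr)$ then has to be evaluated via a standard Jacobi-sum / character-sum estimate giving $\sum_x \Legendre{x^2+t}{p} = -1$ for $t \neq 0$. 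But the factoring approach is cleaner and self-contained, so that is the one I would present.
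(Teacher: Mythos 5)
Your proof is correct and is essentially identical to the paper's: both factor $y^2 - x^2 = t$ via the invertible substitution $u = y \pm x$, $v = y \mp x$ and count the $p-1$ pairs with $uv = t$. Your explicit remark on the invertibility of $2$ (hence the need for $p$ odd) is a small point the paper leaves implicit.
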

\begin{proof}
Rearranging we have $y^2 - x^2 = t \pmod p$. Factorising the LHS and changing variables to $u = y-x$, $v = y+x$, which is an invertible map in $\mathbb{Z}_p$, we have $uv = t \pmod p$. For each non-zero choice for $\, \, u \pmod p$ there is a unique value for $v = t*u^{-1} \pmod p$ which  satisfies the congruence. There are therefore $p-1$ solutions for $(u, v)$ and the number of solutions of the original congruence in terms of the variables $(x, y)$ is the same as the number of solutions for $(u, v)$.
\end{proof}

\bigskip

\begin{lemma}
\label{sumx2+tp}
For fixed non-zero $t \in \mathbb{Z}_p$, $\sum_{x=0}^{p-1} \Legendre{x^2 + t}{p} = -1$.
\end{lemma}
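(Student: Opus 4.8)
The plan is to evaluate the character sum $\sum_{x=0}^{p-1} \Legendre{x^2+t}{p}$ directly, using the fact established in Lemma \ref{y2=x2+t} together with a counting argument on the fibres of the squaring map. First I would recall the standard fact that for any $a \in \mathbb{Z}_p$, the number of $y \in \mathbb{Z}_p$ with $y^2 = a$ equals $1 + \Legendre{a}{p}$, with the convention $\Legendre{0}{p} = 0$; equivalently $\Legendre{a}{p} = -1 + \lvert\{y : y^2 = a\}\rvert$. Summing this identity over $a = x^2 + t$ as $x$ ranges over $\mathbb{Z}_p$ gives
\[
\sum_{x=0}^{p-1} \Legendre{x^2+t}{p} = -p + \lvert\{(x,y) : y^2 = x^2 + t\}\rvert.
\]
By Lemma \ref{y2=x2+t}, the set on the right has exactly $p-1$ elements (here we use that $t \neq 0$), so the sum equals $-p + (p-1) = -1$, as claimed.

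The key steps in order are: (1) state the fibre-count identity $\lvert\{y \in \mathbb{Z}_p : y^2 = a\}\rvert = 1 + \Legendre{a}{p}$ and note it holds for all $a$, including $a = 0$; (2) substitute $a = x^2 + t$ and sum over $x$, interchanging the implicit sums to recognise the total as counting pairs $(x,y)$ with $y^2 - x^2 = t$; (3) invoke Lemma \ref{y2=x2+t} to get the count $p-1$; (4) arithmetic to conclude $-1$. I would write this as a short displayed chain of equalities, being careful that the $a=0$ case (which occurs exactly when $x^2 = -t$, i.e. when $-t$ is a residue) is correctly handled — but this is automatic since the fibre-count identity is valid there too, so no case split is needed.

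The only mild obstacle is making sure the bookkeeping of the double count is transparent: one should phrase it as
\[
\sum_{x=0}^{p-1}\Legendre{x^2+t}{p}
= \sum_{x=0}^{p-1}\bigl(\lvert\{y : y^2 = x^2+t\}\rvert - 1\bigr)
= \Bigl(\sum_{x=0}^{p-1}\lvert\{y:y^2=x^2+t\}\rvert\Bigr) - p,
\]
and then observe the remaining sum is precisely $\lvert\{(x,y) : y^2 = x^2 + t \pmod p\}\rvert$, which Lemma \ref{y2=x2+t} evaluates to $p-1$. An alternative route avoiding Lemma \ref{y2=x2+t} would be to use the multiplicativity of the Legendre symbol and a change of variables, but since Lemma \ref{y2=x2+t} is already available this direct fibre-counting argument is the cleanest, and I would present it that way.
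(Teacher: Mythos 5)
Your proposal is correct and follows essentially the same route as the paper: both use the fibre-count identity $\lvert\{y : y^2 = a\}\rvert = 1 + \Legendre{a}{p}$, sum over $x$, and equate the total to the count $p-1$ from Lemma \ref{y2=x2+t}. Your write-up is just a more explicit version of the paper's argument, with the $a=0$ case handled by the same observation that the identity holds there too.
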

\begin{proof}
On the one hand the lemma \ref{y2=x2+t} says the number of solutions to the congruence $y^2 = x^2 + t \pmod p$ is $p-1$. On the other hand for each fixed $x$ the number of $y$ satisfying $y^2 = x^2 + t \pmod p$ is $1 + \Legendre{x^2 + t}{p}$. Summing this number over the $p$ $x$-values and equating the result to $p-1$ produces the required result.
\end{proof}

\bigskip

\begin{corollary}
\label{k=1:m=2}
The number of $(x, y)$ solutions to the congruence $x^2 + y^2 = 0 \pmod p$ is
$$
\begin{cases}
    2p - 1 & \text{if } \, \, p = 1 \pmod 4 \\
    1 & \text{if} \, \, \, p = 3 \pmod 4.
\end{cases}
$$
For each non-zero $t \in \mathbb{Z}_p$, the number of $(x, y)$ solutions to the congruence $x^2 + y^2 = t \pmod p$ is
$$
\begin{cases}
    p - 1 & \text{if } \, \, p = 1 \pmod 4 \\
    p + 1 & \text{if} \, \, \, p = 3 \pmod 4.
\end{cases}
$$
\end{corollary}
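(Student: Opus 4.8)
The plan is to compute the count for nonzero $t$ directly from a Legendre-symbol sum, and then obtain the $t = 0$ case essentially for free.

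First I would treat nonzero $t \in \mathbb{Z}_p$. For each fixed $x$, the number of $y$ with $y^2 \equiv t - x^2 \pmod p$ is $1 + \Legendre{t - x^2}{p}$, using the convention $\Legendre{0}{p} = 0$. Summing over $x$ and pulling out the factor $\Legendre{-1}{p}$,
$$
N_2(t, p) \;=\; \sum_{x=0}^{p-1}\Bigl(1 + \Legendre{t - x^2}{p}\Bigr) \;=\; p + \Legendre{-1}{p}\sum_{x=0}^{p-1}\Legendre{x^2 - t}{p}.
$$
Since $-t \ne 0$, Lemma \ref{sumx2+tp} applied with $-t$ in place of $t$ gives $\sum_{x}\Legendre{x^2 - t}{p} = -1$, so $N_2(t, p) = p - \Legendre{-1}{p}$. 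Substituting $\Legendre{-1}{p} = 1$ when $p \equiv 1 \pmod 4$ and $\Legendre{-1}{p} = -1$ when $p \equiv 3 \pmod 4$ yields $p - 1$ and $p + 1$ respectively.

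For $t = 0$, the simplest route is a global count: summing $N_2(t, p)$ over all $t \in \mathbb{Z}_p$ enumerates every pair $(x, y) \in \mathbb{Z}_p^2$, so $\sum_t N_2(t, p) = p^2$. As the value $p - \Legendre{-1}{p}$ just obtained is independent of the particular nonzero $t$, this gives $N_2(0, p) = p^2 - (p - 1)\bigl(p - \Legendre{-1}{p}\bigr)$, which equals $2p - 1$ for $p \equiv 1 \pmod 4$ and $1$ for $p \equiv 3 \pmod 4$. Equivalently one can argue $t = 0$ by hand: for $p \equiv 3 \pmod 4$, $-1$ is a non-residue so $x^2 + y^2 \equiv 0$ forces $x = y = 0$; for $p \equiv 1 \pmod 4$, fixing $i$ with $i^2 \equiv -1$ factors the form as $(x - iy)(x + iy)$, so the solutions are exactly $x = \pm i y$, giving $1 + 2(p - 1)$ pairs.

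I do not anticipate a genuine obstacle here; the proof is short. The only points requiring care are the sign bookkeeping when extracting $\Legendre{-1}{p}$ from $\Legendre{t - x^2}{p}$, and making sure the identity ``the number of square roots of $c$ is $1 + \Legendre{c}{p}$'' is applied with the convention $\Legendre{0}{p} = 0$, so that it stays correct precisely at the values of $x$ with $x^2 \equiv t$. Note that Lemma \ref{y2=x2+t} is not invoked directly in this argument, since its content is already packaged inside Lemma \ref{sumx2+tp}, on which the computation rests.
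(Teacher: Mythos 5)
Your proposal is correct and follows essentially the same route as the paper: the nonzero-$t$ count via $\sum_x \bigl(1 + \Legendre{t-x^2}{p}\bigr) = p + \Legendre{-1}{p}\sum_x \Legendre{x^2-t}{p}$ together with Lemma \ref{sumx2+tp} is exactly the paper's computation, and your ``by hand'' argument for $t=0$ (triviality when $-1$ is a non-residue, the factorisation $x = \pm iy$ when it is a residue) is the paper's argument as well. The global-count shortcut $\sum_t N_2(t,p) = p^2$ for the $t=0$ case is a pleasant minor variation, and your explicit care about the convention $\Legendre{0}{p}=0$ at the points $x^2 \equiv t$ is a detail the paper leaves implicit.
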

\begin{proof}
Firstly, $-1$ is a quadratic residue $\pmod p \iff p = 1 \pmod 4$. Therefore, $x^2 = -y^2$ has no solution if $p = 3 \pmod 4$ other than $(0, 0)$ and has \mbox{$2(p-1) + 1 = 2p - 1$} solutions when $p = 1 \pmod 4$.
Next, if $t \neq 0$, the number of $(x, y)$ satisfying $x^2 + y^2 = t \pmod p$ is
$$
\sum_{x = 0}^{p-1} (1 + \Legendre{t - x^2}{p})    
$$
$$
= p + \sum_{x = 0}^{p-1} \Legendre{t - x^2}{p}
$$
$$
= p + \Legendre{-1}{p} \sum_{x = 0}^{p-1} \Legendre{x^2 - t}{p}. 
$$
The corollary follows from Lemma \ref{sumx2+tp}.
\end{proof}

\bigskip

In terms of the vector $\gamma$, Corollary \ref{k=1:m=2} says that when $p = 1 \pmod 4$,
\begin{equation}
\label{gamma2-1mod4}
\gamma_{2} = \begin{pmatrix}  2p - 1\\ p - 1 \\ p - 1 \end{pmatrix}
\end{equation}
and when $p = 3 \pmod 4$,
\begin{equation}
\label{gamma2-3mod4}
\gamma_{2} = \begin{pmatrix}  1\\ p + 1 \\ p + 1 \end{pmatrix}.
\end{equation}
 
\bigskip
 
This establishes theorems \ref{k=1:m even:p=1mod4} and \ref{k=1:m even:p=3mod4} for $N_2$.

In order to establish a recursive formula for the vector $\gamma_m$ in terms of $\gamma_{m-1}$  we need two preliminary results. 

When $t$ is a non-residue mod $p$ we have:

\bigskip

\begin{lemma}
\label{t-z^2modp1}
Suppose $\Legendre{t}{p} = -1$. Then
$$
\lvert \Big\{ z: \Legendre{t - z^2}{p} = -1 \Big\} \rvert =
\begin{cases}
    \frac{p+1}{2}  & \text{if } \, \, p = 1 \pmod 4 \\
    \frac{p-1}{2} & \text{if} \, \, p = 3 \pmod 4
\end{cases}
$$
and
$$
\lvert \Big\{ z: \Legendre{t - z^2}{p} = 1 \Big\} \rvert =
\begin{cases}
    \frac{p-1}{2}  & \text{if } \, \, p = 1 \pmod 4 \\
    \frac{p+1}{2} & \text{if} \, \, \, p = 3 \pmod 4
\end{cases}
$$
\end{lemma}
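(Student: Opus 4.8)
The plan is to count the $p$ residues $z \in \mathbb{Z}_p$ according to the value of $\Legendre{t - z^2}{p} \in \{-1, 0, 1\}$, and to pin down the two nontrivial counts using a single linear relation coming from Lemma \ref{sumx2+tp}.

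First I would observe that the value $0$ never occurs: $\Legendre{t-z^2}{p} = 0$ would force $z^2 \equiv t \pmod p$, contradicting $\Legendre{t}{p} = -1$. Hence, writing $A = \lvert \{ z : \Legendre{t-z^2}{p} = 1 \} \rvert$ and $B = \lvert \{ z : \Legendre{t-z^2}{p} = -1 \} \rvert$, we have $A + B = p$.

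Next I would compute $\sum_{z=0}^{p-1} \Legendre{t-z^2}{p}$. Factoring out $\Legendre{-1}{p}$ gives $\sum_{z=0}^{p-1}\Legendre{t-z^2}{p} = \Legendre{-1}{p}\sum_{z=0}^{p-1}\Legendre{z^2 - t}{p}$, and since $-t \ne 0$, Lemma \ref{sumx2+tp} (applied with $-t$ in place of $t$) evaluates the inner sum to $-1$. Therefore $\sum_{z}\Legendre{t-z^2}{p} = -\Legendre{-1}{p}$, which is the same as $A - B$. Combining with $A + B = p$ yields $A = \tfrac{p - \Legendre{-1}{p}}{2}$ and $B = \tfrac{p + \Legendre{-1}{p}}{2}$.

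Finally I would split into cases on $p \bmod 4$ using $\Legendre{-1}{p} = 1$ when $p \equiv 1 \pmod 4$ and $\Legendre{-1}{p} = -1$ when $p \equiv 3 \pmod 4$; substituting these into the formulas for $A$ and $B$ gives exactly the four values claimed in the statement. There is no real obstacle here — the only point requiring a moment's care is the remark that $t - z^2$ is never $\equiv 0$, so that the tally genuinely splits into just the two classes $\pm 1$; everything else is the standard character-sum bookkeeping already set up in Lemmas \ref{y2=x2+t} and \ref{sumx2+tp}.
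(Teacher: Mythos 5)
Your proof is correct and is essentially the paper's argument in a slightly repackaged form: the paper routes through Corollary \ref{k=1:m=2} (the count of solutions to $x^2+z^2=t$, halved because $x \mapsto -x$ pairs solutions) while you use Lemma \ref{sumx2+tp} directly to evaluate $A-B=\sum_z\Legendre{t-z^2}{p}=-\Legendre{-1}{p}$, but both rest on the same character sum together with the observation that $t-z^2$ never vanishes when $t$ is a non-residue. No gaps.
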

\begin{proof}
Since $t$ is a non-residue, $t - z^2 \neq 0 \pmod p$ for all $z$. From Corollary \ref{k=1:m=2}, the equation $x^2 + z^2 = t$ has $p-1$ solutions $(x, z)$ if $p = 1 \pmod 4$ and $p+1$ solutions $(x, z)$ if $p = 3 \pmod 4$. Therefore, $t - z^2$ is a quadratic residue $\pmod p$ for $\frac{p-1}{2}$ values of $z$ if $p = 1 \pmod 4$ and for $\frac{p+1}{2}$ values of $z$ if $p = 3 \pmod 4$. Since $t - z^2 \neq 0 \pmod p$, $t - z^2$ is a non-quadratic residue $\pmod p$ for the remaining values of $z$. 
\end{proof}
\bigskip

When $t$ is a residue mod $p$ we have:

\bigskip

\begin{lemma}
\label{t-z^2modp2}
Suppose $\Legendre{t}{p} = 1$. Then 
$$
\lvert \Big\{ z: \Legendre{t - z^2}{p} = -1 \Big\} \rvert =
\begin{cases}
    \frac{p-1}{2}  & \text{if } \, \, p = 1 \pmod 4 \\
    \frac{p-3}{2} & \text{if} \, \, p = 3 \pmod 4
\end{cases}
$$
and
$$
\lvert \Big\{ z: \Legendre{t - z^2}{p} = 1 \Big\} \rvert = 
\begin{cases}
    \frac{p-3}{2}  & \text{if } \, \, p = 1 \pmod 4 \\
    \frac{p-1}{2} & \text{if} \, \, p = 3 \pmod 4
\end{cases}
$$
\end{lemma}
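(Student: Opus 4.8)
The plan is to run the same counting argument as in Lemma~\ref{t-z^2modp1}, the only new feature being that, since $t$ is a \emph{nonzero} quadratic residue, there are exactly two values of $z$ (namely $z \equiv \pm\sqrt{t} \pmod p$, which are distinct because $t \neq 0$ and $p$ is odd) for which $t - z^2 \equiv 0 \pmod p$. Write $A_1, A_0, A_{-1}$ for the number of $z \in \mathbb{Z}_p$ with $\Legendre{t-z^2}{p}$ equal to $1, 0, -1$ respectively; then $A_0 = 2$ and $A_1 + A_{-1} = p - 2$.

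Next I would count the solutions $(x,z)$ of $x^2 + z^2 \equiv t \pmod p$ in two ways. By Corollary~\ref{k=1:m=2} this number is $p-1$ when $p \equiv 1 \pmod 4$ and $p+1$ when $p \equiv 3 \pmod 4$. On the other hand, for each fixed $z$ the congruence $x^2 \equiv t - z^2$ has $1 + \Legendre{t-z^2}{p}$ solutions, i.e.\ $2$ solutions when $t-z^2$ is a nonzero residue, $1$ solution when $t - z^2 \equiv 0$, and $0$ solutions otherwise. Summing over $z$ gives $2A_1 + A_0 = 2A_1 + 2$ for the total.

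Equating the two counts yields $2A_1 + 2 = p - 1$, so $A_1 = (p-3)/2$, in the case $p \equiv 1 \pmod 4$, and $2A_1 + 2 = p + 1$, so $A_1 = (p-1)/2$, in the case $p \equiv 3 \pmod 4$. Finally I would recover $A_{-1} = p - 2 - A_1$, which gives $A_{-1} = (p-1)/2$ when $p \equiv 1 \pmod 4$ and $A_{-1} = (p-3)/2$ when $p \equiv 3 \pmod 4$, exactly as claimed. There is no real obstacle here; the one point requiring care, as opposed to the proof of Lemma~\ref{t-z^2modp1}, is not to forget the contribution $A_0 = 2$ coming from $z \equiv \pm\sqrt{t}$, since this is precisely what shifts all four counts by one relative to the non-residue case.
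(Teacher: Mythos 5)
Your argument is correct and is essentially the paper's own proof: both count the solutions of $x^2+z^2\equiv t\pmod p$ via Corollary \ref{k=1:m=2} and compare with the fibre count $1+\Legendre{t-z^2}{p}$ over each $z$, the only difference being that you do the bookkeeping algebraically while the paper lists the solution pairs explicitly. The key point you flag — the two extra values $z\equiv\pm\sqrt{t}$ contributing one solution each — is exactly the adjustment the paper makes relative to Lemma \ref{t-z^2modp1}.
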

\begin{proof}
Since $t$ is a residue, $t - z^2 = 0 \pmod p$ when $z = \pm \sqrt{t}$. When $p = 1 \pmod 4$, Corollary \ref{k=1:m=2} says there are $p-1$ solutions $(x, z)$ to the congruence $x^2 + z^2 = t$ and these solutions can be written as:
$$
(0, \pm \sqrt{t}), (\pm \sqrt{t}, 0), (x_i, z_i)_{i = 1}^{p-5}
$$
where the solutions are arranged so that $z_{2i-1} = z_{2i}$ for all $i$ (i.e $x_{2i-1} = -x_{2i}$). Excluding the solutions $(0, \pm \sqrt{t})$ and including $z = 0$, we see that $t-z^2$ is a quadratic residue for $1 + \frac{p-5}{2} = \frac{p-3}{2}$ values of $z$ when $p = 1 \pmod 4$. A similar argument can be used when $p = 3 \pmod 4$.
\end{proof}

\bigskip

Let $t \in \mathbb{Z}_{p}$. By writing the congruence $\sum_{i=1}^{m+1} {x_i}^2 = t \, \,$ as $\, \, \sum_{i=1}^{m} {x_i}^2 = t - x_{m+1}^2$, we have:
 
\begin{align}
\label{recurrence1} 
\begin{split}
N_{m+1}(t, p) & =  \sum_{s \in \mathbb{Z}_p}  \lvert \Big\{ (x_i)_{i = 1}^{m}: \sum_{i=1}^{m} {x_i}^2 = s \Big\} \rvert \times \lvert \Big\{ x_{m+1}: t - x_{m+1}^2 = s \Big\} \rvert \\
                 & = \lvert \Big\{ (x_i)_{i = 1}^{m}: \sum_{i=1}^{m} {x_i}^2 = 0 \Big\} \rvert \times \lvert \Big\{ x_{m+1}: t - x_{m+1}^2 = 0 \Big\} \rvert \\
                 & + \, \, \lvert \Big\{ (x_i)_{i = 1}^{m}:  \sum_{i=1}^{m} {x_i}^2 = s_1 \Big\} \rvert \times \lvert \Big\{ x_{m+1}: \Legendre{t - x_{m+1}^2}{p} = -1  \Big\} \rvert \\ 
                 & + \, \, \lvert \Big\{ (x_i)_{i = 1}^{m}:  \sum_{i=1}^{m} {x_i}^2 = s_2 \Big\} \rvert \times \lvert \Big\{ x_{m+1}: \Legendre{t - x_{m+1}^2}{p} = 1  \Big\} \rvert
\end{split}
\end{align}

where $s_1$ is a fixed non-residue $\pmod p$ and $s_2$ is a fixed residue $\pmod p$ (the particular choices do not matter due to Lemma \ref{resornotmodp}). We can rewrite equation (\ref{recurrence1}) as
\begin{align}
\label{recurrence2}
\begin{split}
N_{m+1}(t, p) ={}& \gamma_{m,1} \times \lvert \{ x_{m+1}: t - x_{m+1}^2 = 0 \} \rvert \, \, + \\
                     & \gamma_{m, 2} \times \lvert \{ x_{m+1}: \Legendre{t - x_{m+1}^2}{p} = -1  \} \rvert \, \, + \\
                     & \gamma_{m, 3} \times \lvert \{ x_{m+1}: \Legendre{t - x_{m+1}^2}{p} = 1  \} \rvert.
\end{split}
\end{align}
 
By substituting in turn $t = 0$, $t$ a non-residue and $t$ a residue $\pmod p$ in equation (\ref{recurrence2}), we obtain three linear equations for the components of $\gamma_{m+1}$ in terms of the components of $\gamma_m$. The resulting matrix equation can then be solved. Obviously, 
$$ 
\lvert \{ x_{m+1}:  - x_{m+1}^2 = 0 \} \rvert = 1
$$
and $\lvert \{ x_{m+1}: \Legendre{- x_{m+1}^2}{p} = \pm 1 \} \rvert$ is either $0$ or $p-1$ depending on $p \pmod 4$. We can obtain $\lvert \{ x_{m+1}: \Legendre{t - x_{m+1}^2}{p} = -1  \} \rvert$ and $ \lvert \{ x_{m+1}: \Legendre{t - x_{m+1}^2}{p} = 1  \} \rvert$ from Lemma \ref{t-z^2modp1} and Lemma \ref {t-z^2modp2}. The resulting equations can be represented in matrix form as $\gamma_{m+1}  =  A \times \gamma_m$ when $p = 1 \pmod 4$, and $\gamma_{m+1}  =  B \times \gamma_m$ when $p = 3 \pmod 4$, where the matrices $A$ and $B$ are defined by:
\begin{align}
\label{gammam-mod4}
A  := \begin{pmatrix}  1, 0, p - 1\\ 0, \frac{p + 1}{2}, \frac{p-1}{2} \\ 2, \frac{p - 1}{2}, \frac{p-3}{2} \end{pmatrix}
\, \, \text{ and } \, \, 
B := \begin{pmatrix}  1, p - 1, 0 \\ 0, \frac{p - 1}{2}, \frac{p + 1}{2} \\ 2, \frac{p - 3}{2}, \frac{p - 1}{2} \end{pmatrix}
.
\end{align}

\bigskip

Therefore,
$$
\gamma_m = A^{m-1} \times \gamma_1 \, \, \text{ if p = 1 mod 4}
$$
and
$$
\gamma_m = B^{m-1} \times \gamma_1  \, \, \text{ if p = 3 mod 4}.  
$$

\bigskip

Theorems \ref{k=1:m odd:p=1mod4} and \ref{k=1:m even:p=1mod4} now follow from the form of $\gamma_1$ and $\gamma_2$ and the identities:
$$
A^2 \times \begin{pmatrix}  p^{2m} \\ p^{2m} - p^m \\ p^{2m} + p^m \end{pmatrix}
=
\begin{pmatrix}  p^{2m+2} \\ p^{2m+2} - p^{m+1} \\ p^{2m+2} + p^{m+1} \end{pmatrix}
\, \, \text{for} \, \, m \geq 0, 
$$
$$
A^2 \times \begin{pmatrix}  p^{2m-1} + p^{m} - p^{m-1} \\ p^{2m-1} - p^{m-1} \\ p^{2m-1 } - p^{m-1} \end{pmatrix}
=
\begin{pmatrix}  p^{2m+1} + p^{m+1} - p^{m} \\ p^{2m+1} - p^{m} \\ p^{2m+1} - p^{m} \end{pmatrix}
\, \, \text{for} \, \, m \geq 1.
$$

\bigskip 

Similarly, Theorems \ref{k=1:m odd:p=3mod4} and \ref{k=1:m even:p=3mod4} follow from the identities:

\bigskip

$$
B^2 \times \begin{pmatrix}  p^{2m} \\ p^{2m} + (-1)^{m+1} p^m \\ p^{2m} + (-1)^m p^m \end{pmatrix}
=
\begin{pmatrix}  p^{2m+2} \\ p^{2m+2} + (-1)^{m} p^{m+1} \\ p^{2m+2} + (-1)^{m+1} p^{m+1} \end{pmatrix}
\, \, \text{for} \, \, m \geq 0, 
$$
and
\begin{align*}
\begin{split}
B^2 \times &\begin{pmatrix}  p^{2m-1} + (-1)^m p^{m} + (-1)^{m-1} p^{m-1} \\ p^{2m-1} + (-1)^{m-1} p^{m-1} \\ p^{2m-1 } + (-1)^{m-1} p^{m-1} \end{pmatrix} \\
\\
 = & \begin{pmatrix}  p^{2m+1} + (-1)^{m+1} p^{m+1} + (-1)^{m} p^{m} \\ p^{2m+1} + (-1)^m p^{m} \\ p^{2m+1} + (-1)^m p^{m} \end{pmatrix} \, \, \text{for} \, \, m \geq 1.
\end{split}
\end{align*}

\bigskip

\section{Sum of two squares in $\mathbb{Z}_{p^k}$: $N_2(t, p^k)$}
\label{twosquares}

\bigskip

\subsection{Proof of theorem \ref{m=2:p=1mod4}: $p = 1 \pmod 4$} 

\bigskip
We first deal with primes $p: p = 1 \pmod 4$ and calculate $N_2$ in $\mathbb{Z}_{p^k}$ where $k \geq 1$.

Since $\Legendre{-1}{p} = 1$, $\sqrt{-1}$ exists in $\mathbb{Z}_{p^k}$ by Hensel's Lemma. We make the invertible transformation:
$$
u = x + y \, \sqrt{- 1} \, , \, \, \, \, v = x - y \, \sqrt{- 1}.
$$
Then, for $t \in \mathbb{Z}_{p^k}$:
$$
\lvert \{ (x, y): x^2 + y^2 = t \pmod {p^k} \} \rvert \, \, = \, \, \lvert \{ (u, v): uv = t \pmod {p^k} \} \rvert.
$$

Let $t = 0$. Then, $uv = 0$ if and only if $u = 0$ or $v = 0$ or there is an $0 \leq \alpha < k$ such that $p^{\alpha} \| u$ and  $p^{k-\alpha} | v$. We have 
$$
\lvert \{u: p^{\alpha} \| u \} \rvert = p^{k-\alpha-1}(p-1) \, \, \text{ and } \, \, \lvert \{v: p^{k - \alpha} | v \} \rvert = p^{\alpha} - 1. 
$$
So,
\begin{align*}
\lvert \{ (u, v): uv = 0 \pmod {p^k} \} \rvert & = 2p^k - 1 + \sum_{\alpha = 0}^{k-1} p^{k-\alpha-1}(p^{\alpha}-1)(p-1) \\
                                                     & = p^{k-1} \left( p(k+1) - k \right).
\end{align*}

Next, assume $p^{\alpha} \| t$ and write $t = p^{\alpha}\beta$ where $0 \leq \alpha < k$ and $p \nmid \beta$. Then, $uv = t$ if and only if $u = p^{\theta} \delta$ and $v = p^{\alpha-\theta}\beta \delta^{-1} + n p^{k - \theta}$ for some $0 \leq \theta \leq \alpha$, $0 \leq n < p^{k - \theta}$ and $1 \leq \delta < p^{k - \theta}$ with $p \nmid \delta$. Then,
\begin{align*}
\lvert \{ (u, v): uv = t \} \rvert &= \sum_{\theta = 0}^{\alpha} \lvert \Big \{ \delta \in \mathbb{Z}_{p^k}: 1 \leq \delta < p^{k - \theta} \, \, \text{ and } \, \, p \nmid \delta \Big \} \times p^{\theta} \rvert \\
                       & = \sum_{\theta = 0}^{\alpha} p^{k - \theta - 1} (p - 1) p^{\theta} \\
                       & = \left( \alpha + 1 \right) (p-1) p^{k-1}.
\end{align*}

This completes the proof of theorem \ref{m=2:p=1mod4}.

\bigskip

\subsection{Proof of theorem \ref{m=2:p=3mod4}: $p = 3 \pmod 4$}

\bigskip

In this section we assume $p = 3 \pmod 4$ and determine $N_2$ in $\mathbb{Z}_{p^k}$ where $k \geq 1$.

We will first look at $N_2(0, p^k)$. When $p = 3 \pmod 4$, there are no non-trivial solutions $( x, y )$ to $x^2 + y^2 = 0 \pmod p$. Therefore, 
\begin{equation}
\label{ord}
ord_p(x^2 + y^2) = \min \{ord_p(x^2), ord_p(y^2) \}.
\end{equation}

So if $x^2 + y^2 = 0 \pmod {p^k}$, both $x^2$ and $y^2$ must be zero $\pmod {p^k}$. We have,
$$
\{x: x^2 = 0 \pmod {p^k} \} = \{ 0, p^{\lceil \frac{k}{2} \rceil}, 2 p^{\lceil \frac{k}{2} \rceil}, \dots , (p^{k-\lceil \frac{k}{2} \rceil}-1) p^{\lceil \frac{k}{2} \rceil} \}.
$$ 
Hence,
$$
\lvert \{x: x^2 = 0 \pmod {p^k} \} \rvert = p^{k-\lceil \frac{k}{2} \rceil} = p^{\lfloor \frac{k}{2} \rfloor}
$$
and the same result holds for $y$. Therefore, $N_2(0, p^k ) = p^{2 \lfloor \frac{k}{2} \rfloor}$.

Next let $t \in \mathbb{Z}_{p^k}$ and assume $p^{2\alpha + 1} \| t$ for some $\alpha: 1 \leq 2\alpha + 1 < k$. Then $x^2 + y^2 \neq t \pmod {p^k}$  from (\ref{ord}). Hence, $N_2(t, p^k ) = 0$ in this case.

Next assume $p \nmid t$. Any pair $(x, y)$ satisfying $x^2 + y^2 = t \pmod {p^k}$  is of the form $x = x_1 + mp^{k-1}$, $y = y_1 + np^{k-1}$ where ${x_1}^2 + {y_1}^2 = t \pmod {p^{k-1}}$ and $0 \leq m,n < p$. Then ${x_1}^2 + {y_1}^2 = t + rp^{k-1}$ for some $r$. Expanding the initial congruence we find that $p$ must divide $2 x_1 m + 2 y_1 n + r$. Since $p \nmid t$, one of $x_1$ or $y_1$ (say $x_1$) is not divisible by $p$ and is thus invertible in $\mathbb{Z}_p$. If $y_1 = 0$, then $m = -\frac{r}{2x_1} \pmod p$ and $n = \{0, 1, \dots , p-1 \}$. If $y_1 \neq 0$, $m = \frac{-r - 2 y_1 n}{2 x_1} \pmod p$. In either case, there are $p$ choices for the pair $(m, n)$ producing a solution to the original congruence. Therefore,
$$
\lvert \{ (x, y): x^2 + y^2 = t \pmod {p^k} \} \rvert = p \times \lvert \{ (x, y): x^2 + y^2 = t \pmod {p^{k-1}} \} \rvert.
$$
From theorem \ref{k=1:m even:p=3mod4}, $\lvert \{ (x, y): x^2 + y^2 = t \pmod p \} \rvert = p + 1$. So, when $p \nmid t$,

\begin{equation}
\label{r2t}
\lvert \{ (x, y): x^2 + y^2 = t \pmod {p^k} \} \rvert = (p+1)p^{k-1}
\end{equation}

We now assume $p^{2\alpha} \| t$ for some $\alpha: 0 < 2\alpha < k$ and write $t = p^{2\alpha}\beta$ where $p \nmid \beta$. If, $x^2 + y^2 = t \pmod {p^k}$, then $x^2 + y^2 = 0 \pmod p$ and so $x, y = 0 \pmod p$. Putting $x = px_1$, $y = py_1$ and dividing the congruence through by $p^2$ we have ${x_1}^2 + {y_1}^2 = p^{2\alpha - 2}\beta \pmod {p^{k-2}}$. Continuing in this way, we find $m, n \in \mathbb{Z}_{p^{k-\alpha}}$ such that $x = p^{\alpha}m$, $y = p^{\alpha}n$ and $m^2 + n^2 = \beta \pmod{p^{k - 2 \alpha}}$. From (\ref{r2t}) there are $(p+1)p^{k - 2\alpha-1}$ solutions to the congruence $a^2 + b^2 = \beta \pmod {p^{k-2\alpha}}$. Each of these solutions $(a, b)$ generates $p^{\alpha}$ values for $m, n \in \mathbb{Z}_{p^{k-\alpha}}$ given by
$$
m \in \{ a, a + p^{k-2\alpha}, \dots , a + (p^{\alpha} - 1)p^{k - 2 \alpha} \},
$$
$$
n \in \{ b, b + p^{k - 2 \alpha},  \dots ,  b + (p^{\alpha} - 1)p^{k - 2 \alpha} \}.
$$
The number of solutions to $x^2 + y^2 = p^{\alpha} \beta \pmod {p^k}$ is 
$$
(p+1)p^{k - 2\alpha-1} \times p^{\alpha} \times p^{\alpha} = (p+1) p^k.
$$
This completes the proof of theorem \ref{m=2:p=3mod4}.

\bigskip

\section{Sum of three squares in $\mathbb{Z}_{p^k}$: $N_3(t, p^k)$}
\label{threesquares}

\subsection{Some preliminary lemmas}

In this section we will calculate $N_3$ in $\mathbb{Z}_{p^k}$ where $k \geq 1$. We will use the results for $r_2$ and the decomposition:
\begin{equation}
\label{recurrencek} 
N_{3}(t, p^k ) =  \sum_{s \in \mathbb{Z}_{p^k}} N_2(s, p^k ) \times \lvert \Big\{ z: t - z^2 = s \Big\} \rvert.
\end{equation}

\bigskip

Let $t = p^{\alpha}\beta \in \mathbb{Z}_{p^k}$ where $p \nmid \beta$. The following lemmas give the number of $z \in \mathbb{Z}_{p^k} \, \, $ satisfying $\, \, p^{\gamma} \| t - z^2 \, \,$ for each $\gamma: 0 \leq \gamma < k$. 

\bigskip

\begin{lemma}
\label{lemma0}
$\lvert \Big\{ z: z^2 = 0 \pmod {p^k} \Big\} \rvert = p^{\lfloor k/2 \rfloor}.$
\end{lemma}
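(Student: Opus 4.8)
The plan is to reduce the congruence $z^2 \equiv 0 \pmod{p^k}$ to a condition on the $p$-adic valuation of $z$ and then count multiples of a fixed power of $p$ inside $\mathbb{Z}_{p^k}$. First I would observe that for $z \in \mathbb{Z}_{p^k}$ we have $z^2 \equiv 0 \pmod{p^k}$ if and only if $\ord_p(z^2) \geq k$, i.e.\ $2\ord_p(z) \geq k$, which (since $\ord_p(z)$ is an integer) is equivalent to $\ord_p(z) \geq \lceil k/2 \rceil$. Thus the set in question is exactly the set of residues in $\mathbb{Z}_{p^k}$ divisible by $p^{\lceil k/2 \rceil}$, namely $\{\, j\, p^{\lceil k/2 \rceil} : 0 \leq j < p^{k - \lceil k/2 \rceil}\,\}$.

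Counting these, there are $p^{k - \lceil k/2 \rceil}$ of them, and $k - \lceil k/2 \rceil = \lfloor k/2 \rfloor$, which gives the claimed value $p^{\lfloor k/2 \rfloor}$. This is essentially the same computation already carried out in the proof of Theorem \ref{m=2:p=3mod4} for the set $\{x : x^2 \equiv 0 \pmod{p^k}\}$ and (for $p=2$) inside Lemma \ref{02k}, so I would simply present it cleanly; there is no real obstacle here. If desired, one can double-check the parity identity $k - \lceil k/2\rceil = \lfloor k/2 \rfloor$ by splitting into the cases $k$ even and $k$ odd.
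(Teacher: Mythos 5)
Your proof is correct and follows exactly the same route as the paper: characterise the solutions as the multiples of $p^{\lceil k/2 \rceil}$ in $\mathbb{Z}_{p^k}$ and count them as $p^{k - \lceil k/2 \rceil} = p^{\lfloor k/2 \rfloor}$. Your version is if anything slightly more explicit about the valuation argument, but there is no substantive difference.
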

\begin{proof}
$z^2 = 0 \, \, $ if and only if $z = 0$ or $z = s p^{\lceil k/2 \rceil}$ for some $ \, \, s: 1 \leq s < p^{k - \lceil k/2 \rceil} - 1$.
\end{proof}

\bigskip

\begin{lemma}
\label{lemma1}
If $t \neq 0$,
$$
\lvert \Big\{ z: t - z^2 = 0 \pmod {p^k} \Big\} \rvert =
\begin{cases}
    2 p^{\alpha/2} & \text{if } \, \, \alpha \text{ is even and } \Legendre{\beta}{p} = 1 \\
    0                     & \text{ otherwise}.
\end{cases}
$$
\end{lemma}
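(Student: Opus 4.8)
The plan is to reduce counting square roots of $t$ modulo $p^k$ to counting square roots of the unit $\beta$ modulo a smaller power of $p$, where Hensel's lemma applies directly. Throughout, $p$ is the odd prime from the Legendre symbol, and since $t \neq 0$ lies in $\mathbb{Z}_{p^k}$ we have $\alpha < k$, so $k - \alpha \geq 1$.

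First I would analyse the $p$-adic valuation of a solution. If $z^2 \equiv t \pmod{p^k}$ then $z^2 = t + p^k m$ for some $m$, and since $\ord_p(t) = \alpha < k$, the ultrametric inequality forces $\ord_p(z^2) = \alpha$, i.e. $2\ord_p(z) = \alpha$. In particular, if $\alpha$ is odd there is no solution, which gives the second line of the statement. If $\alpha$ is even, every solution $z$ has $\ord_p(z) = \alpha/2$ exactly, hence can be written uniquely as $z = p^{\alpha/2} w$ with $w \in \mathbb{Z}_{p^{k-\alpha/2}}$ and $p \nmid w$; conversely each such $w$ produces a valid $z \in \mathbb{Z}_{p^k}$. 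Substituting and cancelling $p^{\alpha}$ converts $z^2 \equiv t \pmod{p^k}$ into $w^2 \equiv \beta \pmod{p^{k-\alpha}}$, and the constraint $p \nmid w$ is then automatic because $p \nmid \beta$. This yields the bijective identity
$$
\lvert \{ z \in \mathbb{Z}_{p^k} : z^2 \equiv t \pmod{p^k} \} \rvert = \lvert \{ w \in \mathbb{Z}_{p^{k-\alpha/2}} : w^2 \equiv \beta \pmod{p^{k-\alpha}} \} \rvert .
$$

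Next I would count the right-hand side in two steps. By Hensel's lemma, since $p$ is odd and $p \nmid \beta$, the congruence $w^2 \equiv \beta \pmod{p^{k-\alpha}}$ has exactly $1 + \Legendre{\beta}{p}$ solutions in $\mathbb{Z}_{p^{k-\alpha}}$: two distinct solutions $\pm w_0$ when $\beta$ is a residue (distinct because $p$ is odd and $p \nmid w_0$), and none when $\beta$ is a non-residue. Each such residue class modulo $p^{k-\alpha}$ splits into exactly $p^{(k-\alpha/2)-(k-\alpha)} = p^{\alpha/2}$ classes modulo $p^{k-\alpha/2}$. Multiplying, the count equals $(1 + \Legendre{\beta}{p})\,p^{\alpha/2}$, which is $2p^{\alpha/2}$ when $\Legendre{\beta}{p} = 1$ and $0$ when $\Legendre{\beta}{p} = -1$; combined with the $\alpha$-odd case, this is exactly the claimed formula.

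The only delicate point is the bookkeeping of moduli: $z$ ranges mod $p^k$, the variable $w = z/p^{\alpha/2}$ ranges mod $p^{k-\alpha/2}$, yet the congruence it must satisfy is only mod $p^{k-\alpha}$, so one must multiply the Hensel count by the lifting factor $p^{\alpha/2}$ and not conflate the two moduli. Everything else is routine; the case $\alpha = 0$ is subsumed in the same argument with $p^{\alpha/2} = 1$, reducing to the statement that a unit has $1 + \Legendre{\beta}{p}$ square roots mod $p^k$.
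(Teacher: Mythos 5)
Your proof is correct and follows essentially the same route as the paper's: both hinge on Hensel's lemma giving $1+\Legendre{\beta}{p}$ square roots of the unit part and on the $p^{\alpha/2}$-fold multiplicity of lifts (your lifting factor is exactly the paper's parametrisation $z=\pm\sqrt{t}+sp^{k-\alpha/2}$). Your write-up is in fact a little more complete, since you explicitly justify the vanishing for odd $\alpha$ via the valuation argument, which the paper leaves implicit.
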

\begin{proof}
If $\Legendre{t}{p} = 1$, $\sqrt{t}$ exists in $\mathbb{Z}_{p^k}$. Then $t - z^2 = 0 \pmod {p^k} \, \,$ if and only if $z = \pm \sqrt{t} + sp^{k - \alpha/2}$ for some $ \, \, s: 0 \leq s < p^{\alpha/2} - 1$. 
\end{proof}

\bigskip

\begin{lemma}
\label{lemma2}
If $\alpha$ is odd and $\gamma:0 \leq \gamma < k$ then
$$
\lvert \Big\{ z: p^{\gamma} \| t - z^2 \pmod {p^k} \Big\} \rvert =
\begin{cases}
    (p - 1)p^{k - 1 - \gamma/2} & \text{if } \, \, \gamma \text{ is even and } 0 \leq \gamma < \alpha \\
    p^{k - (\alpha + 1)/2} & \text{if } \, \, \gamma = \alpha \\
     0                                        & \text{ otherwise}.
\end{cases}
$$
\end{lemma}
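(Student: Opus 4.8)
The plan is to analyze the $p$-adic valuation of $t - z^2$ by splitting on the valuation of $z$. Write $t = p^\alpha \beta$ with $p \nmid \beta$ and $\alpha$ odd, and suppose $z \in \mathbb{Z}_{p^k}$. First I would observe the basic dichotomy: if $2\,\ord_p(z) \neq \alpha$ then $\ord_p(z^2) = 2\ord_p(z) \neq \alpha = \ord_p(t)$, so $\ord_p(t - z^2) = \min\{2\ord_p(z), \alpha\}$; and since $\alpha$ is odd while $2\ord_p(z)$ is even, $2\ord_p(z) = \alpha$ is impossible, so in fact this minimum rule \emph{always} applies. This is the key simplification that the oddness of $\alpha$ buys us — there is never any cancellation in the leading term. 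So $\gamma := \ord_p(t - z^2)$ equals $2\ord_p(z)$ when $2\ord_p(z) < \alpha$, equals $\alpha$ when $2\ord_p(z) > \alpha$ (i.e. $\ord_p(z) \geq (\alpha+1)/2$, using that $\ord_p(z)$ is an integer), and no odd value $\gamma$ with $\gamma \neq \alpha$, nor any even $\gamma > \alpha$, can occur. That already gives the "$0$ otherwise" branches.

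Next I would count. For the case $\gamma$ even with $0 \leq \gamma < \alpha$: we need $\ord_p(z) = \gamma/2$ exactly, i.e. $z = p^{\gamma/2} w$ in $\mathbb{Z}_{p^k}$ with $p \nmid w$. The number of such $z$ in $\mathbb{Z}_{p^k}$ is the number of $w$ in $\mathbb{Z}_{p^{k-\gamma/2}}$ with $p \nmid w$, which is $(p-1)p^{k - \gamma/2 - 1}$; this matches the claimed $(p-1)p^{k-1-\gamma/2}$. For the case $\gamma = \alpha$: we need $\ord_p(z) \geq (\alpha+1)/2$ (so that $2\ord_p(z) \geq \alpha+1 > \alpha$, forcing $\ord_p(t-z^2) = \alpha$); the number of such $z$ in $\mathbb{Z}_{p^k}$ is the number of multiples of $p^{(\alpha+1)/2}$, namely $p^{k - (\alpha+1)/2}$, matching the claim. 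I should double-check the edge behavior when $(\alpha+1)/2 \geq k$, but since $\alpha < k$ and $\alpha$ odd we have $(\alpha+1)/2 \leq k/2 < k$ when $k \geq 2$ (the case $k=1$ forcing $\alpha = 0$, contradicting $\alpha$ odd), so there is no degeneracy, and moreover $\gamma = \alpha < k$ is within the stated range.

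The main obstacle — really the only subtlety — is being careful that the two regimes ($2\ord_p(z) < \alpha$ versus $2\ord_p(z) > \alpha$) exhaust all $z$ and that the valuation formula $\ord_p(a+b) = \min\{\ord_p a, \ord_p b\}$ when $\ord_p a \neq \ord_p b$ is applied with the correct signs (here to $a = t$, $b = -z^2$, with $\ord_p(-z^2) = \ord_p(z^2)$ since $p$ is odd). Once that is pinned down, the counting is immediate. I would also remark that the three cases are mutually exclusive and that their counts, together with the $\gamma = \alpha$ case, should sum to $p^k$ (summing $(p-1)\sum_{\gamma/2=0}^{(\alpha-1)/2} p^{k-1-\gamma/2} + p^{k-(\alpha+1)/2} = (p-1)p^{k-1}\frac{1 - p^{-(\alpha+1)/2}}{1-p^{-1}} + p^{k-(\alpha+1)/2} = p^k$), which serves as a useful consistency check on the statement.
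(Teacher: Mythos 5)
Your proposal is correct and follows essentially the same route as the paper: split on $\ord_p(z)$, use the oddness of $\alpha$ to rule out cancellation so that $\ord_p(t-z^2)=\min\{2\ord_p(z),\alpha\}$ always, then count. The only (cosmetic) difference is that the paper treats $z=0$ as a separate case and sums $(p-1)p^{k-1-\delta}$ over $\delta>\alpha/2$ to get $p^{k-(\alpha+1)/2}-1$, whereas you count all multiples of $p^{(\alpha+1)/2}$ at once; your closing consistency check that the counts sum to $p^k$ is a nice addition.
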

\begin{proof}
If $z = 0 \, \,$, $p^{\alpha} \| t - z^2$. Suppose $z \neq 0$ with $p^{\delta} \| z$. Then,
\begin{align}
\begin{split}
\label{lem2}
p^{2\delta} \| t - z^2& \, \, \text{ for } 0 \leq \delta < \frac{\alpha}{2} \\
p^{\alpha} \| t - z^2& \, \, \text{ for } \frac{\alpha}{2} < \delta < k.
\end{split}
\end{align}
The statement for $\gamma$ even and less than $\alpha$ then follows from 
$$ 
\lvert \{ z: p^{\delta} \| z \} \rvert = (p-1) p^{k - 1 - \delta}.
$$ 
The statement for $\gamma = \alpha$ follows from
\begin{align*}
\lvert \{ z: p^{\delta} \| z  \, \, \text{ for } \frac{\alpha}{2} < \delta < k \} \rvert &= \sum_{\delta = (\alpha + 1)/2}^{k-1}(p-1) p^{k - 1 - \delta} \\
          & = \, \, p^{k - (\alpha + 1)/2} - 1.
\end{align*}
If $\gamma$ is odd but not equal to $\alpha$, it is clear from (\ref{lem2}) that $ord_p( t - z^2) \neq \gamma$.
\end{proof}

\bigskip

\begin{lemma}
\label{lemma3}
If $\alpha$ is even, $\Legendre{\beta}{p} = -1$ and $\gamma:0 \leq \gamma < k$ then
$$
\lvert \Big\{ z: p^{\gamma} \| t - z^2 \pmod {p^k} \Big\} \rvert =
\begin{cases}
    (p - 1)p^{k - 1 - \gamma/2} & \text{if } \, \, \gamma \text{ is even and } 0 \leq \gamma < \alpha \\
    0                                         & \text{if } \, \, \gamma \text{ is odd }  \\
    p^{k - \alpha/2} & \text{if } \, \, \gamma = \alpha \\
    0       & \text{ if }\, \,  \alpha < \gamma < k.
\end{cases}
$$
\end{lemma}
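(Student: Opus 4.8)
The plan is to mirror the valuation bookkeeping used in the proof of Lemma \ref{lemma2}, but now exploiting that $\beta$ is a non-residue. Since $t = p^{\alpha}\beta$ with $p \nmid \beta$ and $t \in \mathbb{Z}_{p^k}$ is non-zero, we have $\alpha < k$, which will guarantee that all the valuations that occur are actually attained by some $z \in \mathbb{Z}_{p^k}$. First I would treat $z = 0$ separately: then $t - z^2 = t = p^{\alpha}\beta$, so $p^{\alpha}\|t - z^2$, contributing a single element toward the count for $\gamma = \alpha$. For $z \neq 0$ write $p^{\delta}\|z$ with $0 \leq \delta \leq k-1$ and $z^2 = p^{2\delta}w^2$, $p \nmid w$, and split according to how $2\delta$ compares with $\alpha$.

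If $2\delta < \alpha$, then $t - z^2 = p^{2\delta}(p^{\alpha - 2\delta}\beta - w^2)$ with $p^{\alpha - 2\delta}\beta - w^2 \equiv -w^2 \not\equiv 0 \pmod p$, so $p^{2\delta}\|t - z^2$ and the resulting $\gamma = 2\delta$ is even and strictly below $\alpha$. If $2\delta > \alpha$, then (as $\alpha$ is even) $2\delta \geq \alpha + 2$, so $t - z^2 = p^{\alpha}(\beta - p^{2\delta - \alpha}w^2)$ with $\beta - p^{2\delta - \alpha}w^2 \equiv \beta \not\equiv 0 \pmod p$, giving $p^{\alpha}\|t - z^2$, i.e. $\gamma = \alpha$. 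The one subcase not present in Lemma \ref{lemma2} is $2\delta = \alpha$, i.e. $\delta = \alpha/2$: here $t - z^2 = p^{\alpha}(\beta - w^2)$, and because $\beta$ is a non-residue while $w^2$ is a non-zero residue modulo $p$ we have $\beta - w^2 \not\equiv 0 \pmod p$, so again $p^{\alpha}\|t - z^2$. Consequently every $z$ gives either $\gamma = 2\delta < \alpha$ (even) or $\gamma = \alpha$; this already proves the two ``$0$'' rows of the statement, since no $z$ produces an odd $\gamma$ nor a $\gamma$ with $\alpha < \gamma < k$.

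It then remains to count. For even $\gamma$ with $0 \leq \gamma < \alpha$, the contributing $z$ are exactly those with $p^{\gamma/2}\|z$, of which there are $(p-1)p^{k-1-\gamma/2}$. For $\gamma = \alpha$, I would add the contribution of $z = 0$, the $(p-1)p^{k-1-\alpha/2}$ elements with $p^{\alpha/2}\|z$, and the elements with $p^{\delta}\|z$ for $\delta > \alpha/2$, numbering $\sum_{\delta = \alpha/2 + 1}^{k-1}(p-1)p^{k-1-\delta} = p^{k-1-\alpha/2} - 1$; the total is $1 + (p-1)p^{k-1-\alpha/2} + (p^{k-1-\alpha/2} - 1) = p^{k-\alpha/2}$, matching the claimed value.

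There is no genuine obstacle: once the trichotomy on $2\delta$ versus $\alpha$ is in place the rest is a routine geometric-series computation, and the hypothesis $\Legendre{\beta}{p} = -1$ is used in exactly one spot, the borderline case $\delta = \alpha/2$, where it ensures $\beta - w^2 \not\equiv 0 \pmod p$ for every unit $w$. This is precisely the feature distinguishing the present lemma from Lemma \ref{lemma2} and from the residue case handled elsewhere.
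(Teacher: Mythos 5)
Your proof is correct and follows essentially the same route as the paper's: treat $z=0$ separately, stratify the nonzero $z$ by the valuation $\delta = \ord_p(z)$, use the hypothesis $\Legendre{\beta}{p}=-1$ exactly at the borderline case $\delta = \alpha/2$ to force $p^{\alpha}\,\|\,t-z^2$, and finish with the same geometric-series count yielding $p^{k-\alpha/2}$. The only difference is presentational: you spell out the trichotomy on $2\delta$ versus $\alpha$ explicitly, where the paper cites the displayed valuation facts already established in the proof of Lemma \ref{lemma2}.
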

\begin{proof}
If $z = 0 \, \,$, $p^{\alpha} \| t - z^2$. Suppose $z \neq 0$ with $z = p^{\delta}\epsilon$. Then,  (\ref{lem2}) holds and the statement for $\gamma$ even and less than $\alpha$ follows in the same way as for lemma \ref{lemma2}.
When $\delta = \alpha/2$, $t - z^2 = p^{\alpha}(\beta - \epsilon^2)$. As $\Legendre{\beta}{p} = -1$, $\beta - \epsilon^2 \neq 0 \pmod p$ and so $p^{\alpha} \| t - z^2$. Therefore, $p^{\alpha} \| t - z^2$ when $\alpha/2 \leq \delta < k$. The statement for the case $\gamma = \alpha$ follows from:
\begin{align*}
\lvert \{ z: p^{\delta} \| z  \, \, \text{ for } \frac{\alpha}{2} \leq \delta < k \} \rvert &= \sum_{\delta = \alpha/2}^{k-1}(p-1) p^{k - 1 - \delta} = \, \, p^{k - \alpha/2} - 1.
\end{align*}
If $\gamma$ is odd or $\gamma > \alpha$, it is clear from (\ref{lem2}) that $ord_p( t - z^2) \neq \gamma$.
\end{proof}

\bigskip

\begin{lemma}
\label{lemma4}
If $\alpha$ is even, $\Legendre{\beta}{p} = 1$ and $\gamma:0 \leq \gamma < k$ then
$$
\lvert \Big\{ z: p^{\gamma} \| t - z^2 \pmod {p^k} \Big\} \rvert =
\begin{cases}
    (p - 1)p^{k - 1 - \gamma/2} & \text{if } \, \, \gamma \text{ is even and } 0 \leq \gamma < \alpha \\
    0                                         & \text{if } \, \, \gamma \text{ is odd and } 0 \leq \gamma < \alpha \\
    (p - 2)p^{k - 1 - \alpha/2} & \text{if } \, \, \gamma = \alpha \\
    2(p - 1) p^{k - 1 + \alpha/2 - \gamma}       & \text{ if }\, \,  \alpha < \gamma < k.
\end{cases}
$$
\end{lemma}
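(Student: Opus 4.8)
The plan is to classify each nonzero $z \in \mathbb{Z}_{p^k}$ by $\delta := \ord_p(z)$, exactly as in the proofs of Lemma \ref{lemma2} and Lemma \ref{lemma3}, and to treat the borderline value $\delta = \alpha/2$ — the only genuinely new feature here — with extra care, using that $\sqrt{\beta}$ now exists. Writing $z = p^{\delta}\epsilon$ with $p \nmid \epsilon$, formula (\ref{lem2}) still gives $p^{2\delta} \| t - z^2$ when $2\delta < \alpha$ and $p^{\alpha} \| t - z^2$ when $2\delta > \alpha$; combined with $z = 0$ (for which $p^{\alpha} \| t - z^2$) this already yields the row for $\gamma$ even with $0 \le \gamma < \alpha$ — the relevant $z$ are precisely those with $\ord_p(z) = \gamma/2$, of which there are $(p-1)p^{k-1-\gamma/2}$ — and shows there is no $z$ with $\ord_p(t - z^2) = \gamma$ when $\gamma$ is odd and $< \alpha$.

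For $\delta = \alpha/2$ I would invoke Hensel's Lemma: since $\Legendre{\beta}{p} = 1$ and $p$ is odd, $\beta = \rho^2$ for some $\rho$ with $p \nmid \rho$, so for $z = p^{\alpha/2}\epsilon$ we get $t - z^2 = p^{\alpha}(\beta - \epsilon^2) = p^{\alpha}(\rho - \epsilon)(\rho + \epsilon)$. Because $p$ is odd and $p \nmid \rho$, the two factors $\rho \pm \epsilon$ cannot both be divisible by $p$, so $\ord_p(\beta - \epsilon^2) = 0$ unless $\epsilon \equiv \pm\rho \pmod p$, in which case $\ord_p(\beta - \epsilon^2) = \ord_p(\epsilon \mp \rho)$. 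Among the $(p-1)p^{k-\alpha/2-1}$ residues $\epsilon$ modulo $p^{k-\alpha/2}$ coprime to $p$, exactly $2p^{k-\alpha/2-1}$ satisfy $\epsilon \equiv \pm\rho \pmod p$, hence $(p-3)p^{k-\alpha/2-1}$ of them give $p^{\alpha} \| t - z^2$.

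Assembling the $\gamma = \alpha$ total, I would add the $\delta = \alpha/2$ contribution $(p-3)p^{k-\alpha/2-1}$, the single solution $z = 0$, and the $\delta > \alpha/2$ contribution $\sum_{\delta = \alpha/2+1}^{k-1}(p-1)p^{k-1-\delta} = p^{k-\alpha/2-1} - 1$; these collapse to $(p-2)p^{k-1-\alpha/2}$. For $\alpha < \gamma < k$, the only relevant $z$ have $\ord_p(z) = \alpha/2$ and $\epsilon \equiv \pm\rho \pmod p$, and for each sign the substitution $\epsilon = \pm\rho + p^{\gamma-\alpha}u$ with $p \nmid u$ parametrizes exactly the $\epsilon \pmod{p^{k-\alpha/2}}$ with $\ord_p(\beta - \epsilon^2) = \gamma - \alpha$, of which there are $(p-1)p^{k-1+\alpha/2-\gamma}$; summing over the two signs gives $2(p-1)p^{k-1+\alpha/2-\gamma}$. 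The step I expect to require the most care is this last one: one must keep straight that $\epsilon$ is determined modulo $p^{k-\alpha/2}$ while $\beta - \epsilon^2$ is read modulo $p^{k-\alpha}$, check that $1 \le \gamma - \alpha < k - \alpha/2$ so the parametrization is non-degenerate, and confirm that no $z$ with $2\delta > \alpha$ and no $\epsilon \not\equiv \pm\rho \pmod p$ can make $\ord_p(t - z^2)$ exceed $\alpha$ — i.e.\ that (\ref{lem2}) really captures all cases.
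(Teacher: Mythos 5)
Your proposal is correct and follows essentially the same route as the paper's proof: classify nonzero $z$ by $\ord_p(z)$ using (\ref{lem2}), and handle the borderline case $\ord_p(z)=\alpha/2$ by counting the $\epsilon$ with $\epsilon \equiv \pm\sqrt{\beta} \pmod{p^{\gamma-\alpha}}$; your explicit factorisation $\beta-\epsilon^2=(\rho-\epsilon)(\rho+\epsilon)$ just makes precise the step the paper asserts directly, and your totals for $\gamma=\alpha$ and $\alpha<\gamma<k$ agree with its bookkeeping.
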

\begin{proof}
\bigskip
Suppose $z \neq 0$ and write $z = p^{\delta}\epsilon$ with $p \nmid \epsilon$ and $1 \leq \epsilon < p^{k - \delta}$.

When $\gamma$ is even and $0 \leq \gamma < \alpha$, the calculation is the same as for lemmas \ref{lemma2} and \ref{lemma3} above.

If $\gamma$ is odd and less than $\alpha$, (\ref{lem2}) shows there is no $z$ with $p^{\gamma} \| t - z^2$. 

Now, $p^{\alpha} \| t - z^2$ if and only if $z = 0$ or $p^{\alpha/2 + 1} | z$ or $p^{\alpha/2} \| z$ and $p \nmid \beta - \epsilon^2$. We have:
$$
\lvert \{ z: p^{\alpha/2 + 1} | z \} \rvert = p^{k - 1 - \alpha/2} - 1
$$
and
\begin{align*}
\lvert \{ z: p^{\alpha/2} \| z \, \, \text{and} \, \, p \nmid \beta - \epsilon^2 \} \rvert &= \lvert \{ \epsilon: 1 \leq \epsilon < p^{k - \alpha/2} , \, \, p\nmid \epsilon \, \, \text{and} \, \, p \nmid \beta - \epsilon^2 \} \rvert \\
   &= (p - 1) p^{k - 1 - \alpha/2 } - 2 p^{k - 1 - \alpha/2}
\end{align*}
since $\, \, p | \beta - \epsilon^2$ if and only if $\epsilon = \pm \sqrt{\beta} + np$ for some $n$ with $0 \leq n < p^{k - 1 - \alpha/2}$. Adding the three components gives the result for $\gamma = \alpha$.

If $\gamma > \alpha$, then $p^{\gamma} \| t - z^2$ if and only if $p^{\alpha/2} \| z$ and $p^{\gamma - \alpha} \| \beta - \epsilon^2$. We have:
\begin{align*}
\lvert \{ z: p^{\alpha/2} \| z \, \, \text{and} \, \,p^{\gamma - \alpha} \| \beta - \epsilon^2 \} \rvert &= \lvert \{ \epsilon: 1 \leq \epsilon < p^{k - \alpha/2} , \, \,  p \nmid \epsilon,  \, \, p^{\gamma - \alpha} \| \beta - \epsilon^2 \} \rvert \\
   &= 2 (p - 1) p^{k  -1 +\alpha/2 - \gamma }
\end{align*}
since $p^{\gamma - \alpha} \| \beta - \epsilon^2$ if and only if $\epsilon = \pm \sqrt{\beta} + n p^{\gamma - \alpha}$ where $p \nmid n$ and $0 \leq n < p^{k + \alpha/2 - \gamma}$.  

\end{proof}

\bigskip

\subsection{Proof of theorem \ref{m=3:p=1mod4} and theorem \ref{m=3:p=3mod4}}

\bigskip

Let $t \in \mathbb{Z}_{p^k}$ with $t = p^{\alpha}\beta$. Proving theorems \ref{m=3:p=1mod4} and \ref{m=3:p=3mod4} requires a separate calculation for each of the possibilities for $t$. The four possibilities for $t$ are: $t = 0$, $\alpha$ odd, $\alpha$ even and $\Legendre{\beta}{p} = 1$ and $\alpha$ even and $\Legendre{\beta}{p} = -1$. In addition we need to consider the value of $p \pmod 4$. We provide the calculations for a few of the eight possible cases. The calculations for the other cases follow the same pattern.

\bigskip

\subsubsection{The case $p = 1 \pmod 4$ and $t = 0$}

When $t = 0$, equation (\ref{recurrencek}) becomes:
\begin{equation}
\label{m=2t=0}
N_{3}(0, p^k ) =  \sum_{s \in \mathbb{Z}_{p^k}} N_2(s, p^k ) \times \lvert \Big\{ z: - z^2 = s \Big\} \rvert.
\end{equation}

\bigskip

Writing each $s$ (other than $s = 0$) in the sum in equation  (\ref{m=2t=0}) as $s = p^{\gamma}b$ where $p \nmid b$,  we can use theorem \ref{m=2:p=1mod4} and lemma \ref{lemma1}, to get:
\begin{equation*}
N_{3}(0, p^k ) =  p^{k-1}\left(p(k+1) - k \right)*p^{\lfloor k/2 \rfloor}  + \sum_{\gamma \, \, \text{even}} \, \, \sum_{b: \Legendre{b}{p} = -1} (\gamma + 1)(p-1)p^{k-1} * 2p^{\gamma/2}.
\end{equation*}

For fixed $\gamma$, $\lvert \{ s: p^\gamma \| s \text{ and } \Legendre{b}{p} = -1\} \rvert = \frac{1}{2}(p-1)p^{k-\gamma-1}$. Therefore,
\begin{align*}
N_3(0, p^k ) &= p^{k-1}\left(p(k+1) - k \right) * p^{\lfloor k/2 \rfloor}  + (p-1)^2 p^{k-1}\sum_{\gamma \, \, \text{even}} (\gamma+1)p^{k-\gamma/2-1} \\
           &= p^{k-1}\left(p(k+1) - k \right) * p^{\lfloor k/2 \rfloor}  + (p-1)^2 p^{k-1}\sum_{\gamma = 0}^{\lfloor (k-1)/2 \rfloor} (2\gamma + 1)p^{k - \gamma - 1}
\end{align*}
after reindexing. The two parts of the sum are evaluated as:
$$
(p-1)^2 p^{k-1}\sum_{\gamma = 0}^{\lfloor (k-1)/2 \rfloor} \gamma p^{k - \gamma - 1} = p^{2k - 1} - \lfloor \frac{k+1}{2} \rfloor p^{2k - \lfloor (k+1)/2 \rfloor} + \lfloor \frac{k - 1}{2} \rfloor p^{2k - 1 - \lfloor (k+1)/2 \rfloor}
$$
and
$$
(p-1)^2 p^{k-1}\sum_{\gamma = 0}^{\lfloor (k-1)/2 \rfloor} p^{k - \gamma - 1} = p^{2k} - p^{2k-1} - p^{2k - \lfloor (k+1)/2 \rfloor} + p^{2k - 1 - \lfloor (k+1)/2 \rfloor }.
$$

Manipulating the $floor$ and $ceiling$ functions produces the result for $t = 0$ in theorem~\ref{m=3:p=1mod4}.

\bigskip

\subsubsection{The case $p = 3 \pmod 4$ and $t = 0$}

Now consider the case $p = 3 \pmod 4$. Using theorem \ref{m=2:p=3mod4} and lemma \ref{lemma1}:
\begin{align*}
N_3(0, p^k ) & = p^{2\lfloor k/2 \rfloor}*p^{\lfloor k/2 \rfloor} + \sum_{s \neq 0} N_2(s, p^k )*2p^{\gamma} \\
           & = p^{3\lfloor k/2 \rfloor} + \sum_{s \neq 0} (p+1)p^{k-1}*2p^{\gamma}
\end{align*}
where the sums in the two lines above are over $s: s = p^{2\gamma}b$ with $0 \leq 2\gamma < k$, $p \nmid b$, $\Legendre{b}{p} = -1$ and $1 \leq b < p^{k - 2\gamma}$. For fixed $\gamma$, the number of $b$ satisfying the last three conditions is $\frac{1}{2} p^{k-2\gamma-1} (p - 1)$. We therefore have:

\begin{align*}
N_3(0, p^k ) =& p^{3\lfloor k/2 \rfloor} + \sum_{\gamma = 0}^{\lfloor (k-1)/2 \rfloor} \frac{1}{2} p^{k-2\gamma-1} (p - 1)(p+1)p^{k-1}*2p^{\gamma} \\
           =& p^{3\lfloor k/2 \rfloor} + (p+1)p^{k-1} \sum_{\gamma = 0}^{\lfloor (k-1)/2 \rfloor} (p-1) p^{k-\gamma-1}\\
           =& p^{3\lfloor k/2 \rfloor} + (p+1)p^{k-1} \left( p^k - p^{k - 1 - \lfloor (k-1)/2 \rfloor} \right) \\
           =& p ^{2k} + p^{2k-1} + p^{3\lfloor k/2 \rfloor} - (p+1) p^{2k - 2 - \lfloor (k-1)/2 \rfloor}. 
\end{align*}

\bigskip
This is equivalent to the statement for $t = 0$ in theorem \ref{m=3:p=3mod4}.

\bigskip

\subsubsection{The case $p = 1 \pmod 4$ and $\alpha$ odd}

Theorem \ref{m=2:p=1mod4} shows that for $s \in \mathbb{Z}_{p^k}$, $N_2(s, p^k )$ depends only on $ord_p(s)$. We can use (\ref{recurrencek}) and lemmas \ref{lemma1} and  \ref{lemma2} in this case to get:
\begin{align*}
\begin{split}
N_{3}(t, p^k ) =  \, \, &p^{k-1} \left( p(k+1) - k \right) \times \lvert \Big\{ z: t - z^2 = 0 \Big\} \rvert \\
            & \, \, + \, \, \sum_{\gamma = 0}^{k-1} (\gamma + 1)(p - 1)p^{k-1}  \times  \lvert \Big\{ z: p^{\gamma} \| t - z^2 \Big\} \rvert \\
             = &\sum_{\gamma = 0}^{(\alpha-1)/2} (2\gamma + 1)(p - 1)p^{k-1}  \times (p-1) p^{k-1-\gamma} + (\alpha + 1) (p-1) p^{k-1} \times p^{k- (\alpha+1)/2}
\end{split}
\end{align*}

where the sum has been reindexed to account for even $\gamma$. The two parts of the sum are evaluated as:
$$
2 (p-1)^2 p^{2k-2}\sum_{\gamma = 0}^{(\alpha - 1)/2} \gamma p^{- \gamma} = 2 p^{2k - 1} - \left( \alpha  + 1 \right) p^{2k - (\alpha + 1)/2} + \left( \alpha - 1 \right) p^{2k - (\alpha + 3)/2 }
$$
and
$$
(p-1)^2 p^{2k-2}\sum_{\gamma = 0}^{(\alpha - 1)/2} p^{- \gamma} = (p - 1) (p^{2k - 1} - p^{2k - (\alpha + 3)/2}).
$$
The formula in theorem \ref{m=3:p=1mod4} for the case when $\alpha$ is odd follows.

\bigskip

\subsubsection{The case $p = 1 \pmod 4, \, \, \alpha$ even and $\Legendre{\beta}{p} = 1$}

We use (\ref{recurrencek}), theorem \ref{m=2:p=1mod4}, lemmas \ref{lemma1} and  \ref{lemma4} in this case to get:

\begin{align*}
N_{3}(t, p^k ) &= \Sigma_1 + \Sigma_2 + \Sigma_3 + \Sigma_4
\end{align*}

where, $\Sigma_1$ is the contribution to $N_3(t, p^k )$ in (\ref{recurrencek}) from $s = 0$, $\Sigma_2$ is the contribution from $s$ such that $0 \leq ord_p(s) < \alpha$, $\Sigma_3$ comes from $ord_p(s) = \alpha$ and $\Sigma_4$ comes from $s$ with $\alpha < ord_p(s) < k$.
So,
\begin{align*}
\Sigma_1 = p^{k-1} \Big( p(k+1) - k \Big) \times 2 p^{\alpha/2}.
\end{align*}

After re-indexing for even $\gamma$ in $\Sigma_2$,
\begin{align*}
\Sigma_2 &= (p - 1) p^{k - 1} \sum_{\gamma = 0}^{\alpha/2 - 1} (2 \gamma + 1) (p - 1) p^{k - 1 - \gamma} \\
                & = p^{2k} + p^{2k - 1} - (\alpha+1) p^{2k-\alpha/2} + (\alpha - 1) p^{2k - 1 - \alpha/2}.
\end{align*}
We also have:
\begin{align*}
\Sigma_3 &= (p - 1) (p - 2) (\alpha + 1)  p^{2k - 2 - \alpha/2}
\end{align*}
and \begin{align*}
\Sigma_4 &= 2 (p - 1)^2 p^{k - 1} \sum_{\gamma = \alpha + 1}^{k - 1} (\gamma + 1) p^{k - 1 + \alpha/2 - \gamma} \\
                & = 2(\alpha + 2) p^{2k - 1 - \alpha/2} - 2(k + 1) p^{k + \alpha/2} + 2k p^{k - 1 + \alpha/2} - 2(\alpha + 1) p^{2k - 2 - \alpha/2}.
\end{align*}

Adding up the various terms gives the formula for $N_3(t, p^k )$ in theorem \ref{m=3:p=1mod4} when $\alpha$ is even and $\Legendre{\beta}{p} = 1$. 

\bigskip

\bibliographystyle{plain}
\begin{small}
\bibliography{SSFunction}

\begin{thebibliography}{10}

\bibitem{Arias:2018aa}
Fabi{\'a}n Arias, Jerson Borja, and Luis Rubio.
\newblock Counting integers representable as images of polynomials modulo $n$.
\newblock {\em arXiv}, arXiv:1812.11599, 12 2018.

\bibitem{Artin_1927}
Emil Artin.
\newblock \"{U}ber die {Z}erlegung definiter {F}unktionen in {Q}uadrate.
\newblock {\em Abhandlungen aus dem Mathematischen Seminar der Universit{\"a}t
  Hamburg}, 5(1):100--115, Dec 1927.

\bibitem{Burns:2017aa}
Rob Burns.
\newblock Representing numbers as the sum of squares and powers in the ring
  $\mathbb{Z}_n$.
\newblock {\em arXiv}, arXiv:1708.03930, 08 2017.

\bibitem{Carlitz1968}
L.~Carlitz.
\newblock Solution to problem 140 (proposed by {I}. {C}onnell).
\newblock {\em Canad. Math. Bull.}, 11:615--619, 1968.

\bibitem{Calderon2015}
{Catalina Calder\'{o}n}, {Jos\'{e} Mar\'{i}a Grau}, {Antonio M.
  Oller-Marc\'{e}n}, and { L\'{a}szl\'{o} T\'{o}th}.
\newblock Counting invertible sums of squares modulo $n$ and a new
  generalization of {E}uler's totient function.
\newblock {\em Publicationes Mathematicae Debrecen}, 87:133--145, 2015.

\bibitem{CHOI1980234}
M.D. Choi, T.Y. Lam, B.~Reznick, and A.~Rosenberg.
\newblock Sums of squares in some integral domains.
\newblock {\em Journal of Algebra}, 65(1):234 -- 256, 1980.

\bibitem{10.2307/43972685}
James~W. Cogdell.
\newblock On sums of three squares.
\newblock {\em Journal de Th{\'e}orie des Nombres de Bordeaux}, 15(1):33--44,
  2003.

\bibitem{Cohn1960}
Harvey Cohn.
\newblock Decomposition into {F}our {I}ntegral {S}quares in the {F}ields of
  $\sqrt{2}$ and $\sqrt{3}$.
\newblock {\em American Journal of Mathematics}, 82(2), 1960.

\bibitem{Colliot2009}
Jean-Louis Colliot-Th{\'e}l{\`e}ne and Fei Xu.
\newblock Brauer-{M}anin obstruction for integral points of homogeneous spaces
  and representation by integral quadratic forms.
\newblock {\em Compositio Mathematica}, 145:309--363, 2009.

\bibitem{Gondard1974}
P.~Ribenboim D.~Gondard.
\newblock Le 17e problme de {H}ilbert pour les matrices.
\newblock {\em Bull. Sci. Math.}, 98:49--56, 1974.

\bibitem{Dickson}
L.~E. Dickson.
\newblock {\em History of the theory of numbers}, volume II: Diophantine
  analysis.
\newblock Chelsea Publishing Co, New York, 1966.

\bibitem{Euler1752}
Leonard Euler.
\newblock De numerus qui sunt aggregata quorum quadratorum.
\newblock {\em Novi commentarii academiae scientiarum Petropolitanae}, 4:3--40,
  1752-1753.

\bibitem{Euler1754}
Leonard Euler.
\newblock Demonstratio theorematis {F}ermatiani omnem numerum primum formae
  $4n+1$ esse summam duorum quadratorum.
\newblock {\em Novi commentarii academiae scientiarum Petropolitanae}, 5:3--13,
  1754-1755.

\bibitem{10.2307/3844904}
Jos{\'e}~F. Fernando, Jes{\'u}s~M. Ruiz, and Claus Scheiderer.
\newblock Sums of squares in real rings.
\newblock {\em Transactions of the American Mathematical Society},
  356(7):2663--2684, 2004.

\bibitem{Hardy:1979tk}
{G. H. Hardy} and {E. M. Wright}.
\newblock {\em {An introduction to the theory of numbers}}.
\newblock Oxford University Press, 5th edition, 1979.

\bibitem{Gotzky1928}
Fritz G\"{o}tzky.
\newblock \"{U}ber eine zahlentheoretische {A}nwendung von {M}odulfunktionen
  zweier {V}er\"{a}nderlichen.
\newblock {\em Mathematische Annalen}, 100:411--437, 1928.

\bibitem{GRAU2019427}
Jos{\'e}~Mar{\'\i}a Grau and Antonio~M. Oller-Marc{\'e}n.
\newblock Fast computation of the number of solutions to $x_1^2 + \dots + x_k^2
  \equiv \lambda (\mod n)$.
\newblock {\em Journal of Number Theory}, 200:427--440, 2019.

\bibitem{doi:10.1080/03081087708817172}
Malcolm Griffin and Mark Krusemeyar.
\newblock Matrices as sums of squares.
\newblock {\em Linear and Multilinear Algebra}, 5(1):33--44, 1977.

\bibitem{Harrington:2014wx}
J~Harrington, L~Jones, and A~Lamarche.
\newblock {Representing integers as the sum of two squares in the ring
  $\mathbb{Z}_n$}.
\newblock {\em Journal of Integer Sequences}, 17(14.7.4), 2014.

\bibitem{Ireland_1990}
Kenneth Ireland and Michael Rosen.
\newblock {\em A Classical Introduction to Modern Number Theory}.
\newblock Springer New York, 1990.

\bibitem{Jacobi1829}
Carl Gustav~Jacob Jacobi.
\newblock {\em Fundamenta nova theoriae functionum ellipticarum}.
\newblock Borntraeger, K\"{o}nigsberg, 1829.

\bibitem{Katre_2012}
S.~A. Katre and Anuradha~S. Garge.
\newblock Matrices over commutative rings as sums of $k$-th powers.
\newblock {\em Proceedings of the American Mathematical Society},
  141(1):103--113, May 2012.

\bibitem{Landau1908}
Edmund Landau.
\newblock \"{U}ber die {E}inteilung der positiven ganzen {Z}ahlen in vier
  {K}lassen nach der {M}indestzahl der zu ihrer additiven {Z}usammensetzung
  erforderlichen {Q}uadrate.
\newblock {\em Arch. Math. Phys.}, 13:304--312, 1908.

\bibitem{10.2307/2668853}
David~B. Leep and Colin~L. Starr.
\newblock Polynomials in ${R}[x,y]$ that are sums of squares in ${R}(x,y)$.
\newblock {\em Proceedings of the American Mathematical Society},
  129(11):3133--3141, 2001.

\bibitem{LI201841}
Songsong Li and Yi~Ouyang.
\newblock Counting the solutions of $\lambda_1 x_1^{k_1} + \dots + \lambda_t
  x_t^{k_t} \equiv c (\mod n)$.
\newblock {\em Journal of Number Theory}, 187:41--65, 2018.

\bibitem{10.2307/40817}
Stephen~C. Milne.
\newblock New {I}nfinite {F}amilies of {E}xact {S}ums of {S}quares {F}ormulas,
  {J}acobi {E}lliptic {F}unctions, and {R}amanujan's {T}au {F}unction.
\newblock {\em Proceedings of the National Academy of Sciences of the United
  States of America}, 93(26):15004--15008, 1996.

\bibitem{Mollahajiaghaei_2017}
Mohsen Mollahajiaghaei.
\newblock On the addition of squares of units modulo $n$.
\newblock {\em Journal of Number Theory}, 170:35--45, jan 2017.

\bibitem{newman1985}
Morris Newman.
\newblock Sums of squares of matrices.
\newblock {\em Pacific J. Math.}, 118(2):497--506, 1985.

\bibitem{Niven1940}
Ivan Niven.
\newblock Integers of quadratic fields as sums of squares.
\newblock {\em Trans. Amer. Math. Soc.}, 48:405--417, 1940.

\bibitem{Ono2002}
Ken Ono.
\newblock Representations of integers as sums of squares.
\newblock {\em Journal of Number Theory}, 95:253--258, 2002.

\bibitem{10.2307/1970962}
Claudio Procesi and Murray Schacher.
\newblock A non-commutative real {N}ullstellensatz and {H}ilbert's 17th
  problem.
\newblock {\em Annals of Mathematics}, 104(3):395--406, 1976.

\bibitem{ramus1834}
C.~Ramus.
\newblock Solution generale d'un probleme d'analyse combinatoire.
\newblock {\em J. Reine Ang. Math.}, 11:352--355, 1834.

\bibitem{Schulze-Pillot2004}
Rainer Schulze-Pillot.
\newblock Representation by integral quadratic forms - a survey.
\newblock 2004.

\bibitem{Siegel_1921}
Carl Siegel.
\newblock Darstellung total positiver {Z}ahlen durch {Q}uadrate.
\newblock {\em Mathematische Zeitschrift}, 11(3-4):246--275, Sep 1921.

\bibitem{SUZUKI2007363}
Jeff Suzuki.
\newblock Euler and {N}umber {T}heory: {A} {S}tudy in {M}athematical
  {I}nvention.
\newblock In Robert~E. Bradley and C.~Edward Sandifer, editors, {\em Leonhard
  {E}uler: {L}ife, {W}ork and {L}egacy}, volume~5 of {\em Studies in the
  History and Philosophy of Mathematics}, pages 363 -- 383. Elsevier, 2007.

\bibitem{Toth2014}
L\'{a}szl\'{o} T\'{o}th.
\newblock Counting solutions of quadratic congruences in several variables
  revisited.
\newblock {\em Journal of Integer Sequences}, 17(Article 14.11.6), 2014.

\bibitem{Yang_2015}
Quan-Hui Yang and Min Tang.
\newblock On the addition of squares of units and nonunits modulo $n$.
\newblock {\em Journal of Number Theory}, 155:1--12, oct 2015.

\end{thebibliography}
\end{small}

\end{document}